\xpatchcmd{\@thm}{\thm@headpunct{.}}{\thm@headpunct{}}{}{}
\xpatchcmd{\@thm}{\thm@headpunct{.}}{\thm@headpunct{}}{}{}
\DeclareMathOperator*{\argmax}{arg\,max}
\DeclareSymbolFont{bbold}{U}{bbold}{m}{n}
\DeclareSymbolFontAlphabet{\mathbbold}{bbold}
\newcommand{\cB}{\mathcal{B}}
\newcommand{\cF}{\mathcal{F}}
\newcommand{\cG}{\mathcal{G}}
\newcommand{\cH}{\mathcal{H}}
\newcommand{\cI}{\mathcal{I}}
\newcommand{\cN}{\mathcal{N}}
\newcommand{\cP}{\mathcal{P}}
\newcommand{\cS}{\mathcal{S}}
\newcommand{\cX}{\mathcal{X}}
\newcommand{\EE}{\mathbb{E}}
\newcommand{\NN}{\mathbb{N}}
\newcommand{\PP}{\mathbb{P}}
\newcommand{\RR}{\mathbb{R}}
\newcommand*{\dd}{\, \mathsf{d}}
\newcommand{\es}{\mathsf{ess\,sup}}
\newcommand{\vasti}{\bBigg@{3.5 }}
\newcommand{\vast}{\bBigg@{4}}
\newcommand{\Vast}{\bBigg@{5}}
\newcommand{\Vastt}{\bBigg@{7}}
\newtheorem{theorem}{Theorem}
\newtheorem{cor}{Corollary}
\newtheorem{lemma}{Lemma}
\newtheorem{prop}{Proposition}
\newtheorem{remark}{Remark}
\newtheorem{definition}{Definition}
\newcommand{\norm}[1]{\left\Vert#1\right\Vert}
\newcommand{\abs}[1]{\left\vert#1\right\vert}
\newcommand{\ind}{\mathbbm 1}
\newcommand{\Ucal}{\mathcal{U}}
\newcommand{\X}{\mathcal{X}}
\newcommand\blfootnote[1]{%
	\begingroup
	\renewcommand\thefootnote{}\footnote{#1}%
	\addtocounter{footnote}{-1}%
	\endgroup
}
\newcommand{\kl}[2]{\mathsf{D}_\mathsf{KL}\left(#1\middle\|#2\right)}
\newcommand{\chisq}[2]{\chi^2\left(#1\middle\|#2\right)}
\def\h2{\tilde h}
\def\hm1{\hat{\mathsf{H}}_{-1}}
\definecolor{ceruleanblue}{rgb}{0.16, 0.32, 0.75}
\newcommand{\revised}[1]{{\color{black} #1}}
\title{Non-asymptotic Performance Guarantees for Neural Estimation of $\mathsf{f}$-Divergences}
\begin{document}
\title{Non-Asymptotic Performance Guarantees for\\ Neural Estimation of $\mathsf{f}$-Divergences}

\author{Sreejith Sreekumar, Zhengxin Zhang,   Ziv Goldfeld }
\maketitle
\begin{abstract}
Statistical distances (SDs), which quantify the dissimilarity between probability distributions, are central to machine learning and statistics. A modern method for estimating such distances from data relies on parametrizing a variational form by a neural network (NN) and optimizing it. These estimators are abundantly used in practice, but corresponding performance guarantees are partial and call for further exploration. In particular, there seems to be a fundamental tradeoff between the two sources of error involved: approximation and estimation. While the former needs the NN class to be rich and expressive, the latter relies on controlling complexity. This paper explores this tradeoff by means of non-asymptotic error bounds, focusing on three popular choices of SDs---Kullback-Leibler divergence, chi-squared divergence, and squared Hellinger distance. Our analysis relies on non-asymptotic function approximation theorems and tools from empirical process theory. Numerical results validating the theory are also provided.
\end{abstract}
\blfootnote{The work of S. Sreekumar is supported by the TRIPODS Center for Data Science National Science Foundation Grant CCF-1740822. The work of Z. Goldfeld is supported in part by the NSF CRII Award under Grant CCF-1947801, in part by the 2020 IBM Academic Award, and in part by the NSF CAREER Award under Grant CCF-2046018.
S. Sreekumar  and Z. Goldfeld are with the School of Electrical and Computer Engineering, Cornell University, Ithaca, NY 14850, USA. Z. Zhang is with the Center for Applied Mathematics at the same University. (email: sreejithsreekumar@cornell.edu;   zz658@cornell.edu; goldfeld@cornell.edu).}
\section{INTRODUCTION}
Statistical distances (SDs) measure the discrepancy between probability distributions. A variety of machine learning (ML) tasks, from generative modeling \citep{kingma2013auto,goodfellow2014generative,nowozin2016f,arjovsky2017wasserstein,tolstikhin2018wasserstein} to those relying on barycenters \citep{rabin2011wasserstein,gramfort2015fast,dognin2019wasserstein}, can be posed as measuring or optimizing a SD between the data distribution and the model. Popular SDs include $\mathsf{f}$-divergences 
\citep{ali1966general,csiszar1967information}, integral probability metrics (IPMs) \citep{zolotarev1983probability,muller1997integral}, and Wasserstein distances \citep{villani2008optimal}. A common formulation that captures many of these is\footnote{Specifically, \eqref{Mestgen} accounts for $\mathsf{f}$-divergences, IPMs and the 1-Wasserstein distance.}
\begin{equation}
     \mathsf{H}_{\gamma, \mathcal{F}}(P,Q)=\sup_{f \in \mathcal{F}} \mathbb{E}_P[f]-\mathbb{E}_Q[\gamma \circ f], \label{Mestgen}
\end{equation}
where $\cF$ is a function class of `discriminators' and $\gamma$ is sometimes called a `measurement function' (cf., e.g., \cite{arora2017generalization}). This variational form is at the core of many ML algorithms implemented based on SDs, and has been recently leveraged for estimating SDs from samples.

\revised{Various non-parametric estimators of SDs are available in the literature \citep{Wang-2005,Perez-2008,krishnamurthy2014nonparametric,kandasamy2015nonparametric,liang2019estimating}. These classic methods typically rely on kernel density estimation (KDE) or $k$-nearest neighbors (kNN) techniques, and are known to achieve optimal estimation error rates for specific SDs, subject to smoothness and/or regularity conditions on the densities. To mention a few, \cite{kandasamy2015nonparametric} proposed a KDE-based estimator which achieves the parametric mean squared error (MSE) rate for KL divergence estimation, provided that the densities are bounded away from zero and belong to a H\"{o}lder class of sufficiently large smoothness. For the special case of entropy estimation in the high smoothness regime,  \cite{berrett2019efficient} proposed an asymptotically efficient weighted kNN estimator that does not rely on the boundedness from below assumption. Recently, \cite{Yanjun-2020} proposed a minimax rate-optimal entropy estimator for densities of sufficient Lipschitz smoothness. While these classic estimators achieve optimal performance under appropriate assumptions, they are often hard to compute in high dimensions.}
\subsection{Statistical Distances Neural Estimation}

Typical applications to machine learning, e.g., generative adversarial networks (GANs) \citep{nowozin2016f,arjovsky2017wasserstein} or anomaly detection \citep{Poczos-liang-2011}, favor estimators whose computation scales well with number of samples and is compatible with backpropagation and minibatch-based optimization. A modern estimation technique that adheres to these requirements is the so-called neural estimation method \citep{arora2017generalization,zhang2018discrimination,belghazi2018}.  
Neural estimators (NEs) parameterize the discriminator class $\mathcal{F}$ in \eqref{Mestgen} by a neural network (NN), approximate expectations by sample means, and then optimize the obtained empirical objective. Denoting the samples from $P$ and $Q$ by $X^n:=(X_1,\cdots,X_n)$ and $Y^n:=(Y_1,\cdots,Y_n)$, respectively, the resulting NE is
\begin{equation}
    \hat{\mathsf{H}}_{\gamma,\mathcal{G}_k}(X^n,Y^n):=\sup_{g \in \mathcal{G}_k} \frac 1n \sum_{i=1}^n\Big[ g(X_i)- \gamma \circ g(Y_i)\Big], \label{Mest-emp}
\end{equation}
where $\mathcal{G}_k$ is the class of functions realizable by a $k$-neuron NN. Despite the popularity of NEs in applications, their theoretical properties and corresponding performance guarantees remain largely obscure. Addressing this deficit is the objective of this work.

There is a fundamental tradeoff between the quality of approximation by NNs and the sample size needed for accurate estimation of the parametrized form. The former is measured by the \textit{approximation error}, $\big|\mathsf{H}_{\gamma, \mathcal{F}}(P,Q)-\mathsf{H}_{\gamma, \mathcal{G}_k}(P,Q)\big|$, whereas the latter by the \textit{estimation error}, $\big|\hat{\mathsf{H}}_{\gamma,\mathcal{G}_k}(X^n,Y^n)-\mathsf{H}_{\gamma, \mathcal{G}_k}(P,Q)\big|$. While approximation needs $\cG_k$ to be rich and expressive, efficient estimation relies on controlling its complexity. Past works on NEs provide only a partial account of estimation performance. \cite{belghazi2018} proved consistency of mutual information neural estimation (MINE), which boils down to estimating KL divergence, but do not quantify approximation errors. Non-asymptotic sample complexity bounds for the parameterized form, i.e., when $\cF$ in \eqref{Mestgen} is the NN class $\cG_k$ to begin with, were derived in \cite{arora2017generalization,zhang2018discrimination}. These objects are known as NN distances and, by definition, overlook the approximation error with respect to (w.r.t.) the original SD. Also related is \cite{Nguyen-2010}, where KL divergence estimation rates are provided under the assumption that the approximating class is large enough to contain an optimizer of \eqref{Mestgen}. This assumption is often violated in practice, e.g., when using NNs as done herein, or a reproducing kernel Hilbert space, as considered in \cite{Nguyen-2010}. This makes the quantification of the approximation error pivotal for a complete account of estimation performance. In light of the above, our objective is to derive non-asymptotic neural estimation performance bounds that characterize the dependence of the error on $k$ and $n$, and help understand tradeoffs between~them. 
\subsection{Contributions}
We show that the effective (approximation plus estimation) error of a NE realized by a $k$-neuron shallow NN with bounded parameters and $n$ samples scales like $$O\left(k^{-1/2}+h_{\gamma}(k) n^{-1/2}\right),$$ where $h_{\gamma}(k)$ grows with $k$ at a rate that depends on the estimated SD. In order to bound the approximation error, we refine Theorem 1 in \citet{Barron-1992} to show that a $k$-neuron NN with \emph{bounded parameters} can approximate any function in the Barron class \citep{Barron_1993} under the sup-norm within an $O(k^{-1/2})$ error.  To control the empirical estimation error, we leverage tools from empirical process theory and bound the associated entropy integral \citep{AVDV-book} to achieve the $O\left(h_{\gamma}(k)n^{-1/2}\right)$ convergence rate.

The effective error bound is then specialized to three predominant $\mathsf{f}$-divergences: KL, chi-squared ($\chi^2$ divergence, and squared Hellinger distance. We establish finite-sample absolute-error bounds of these NEs by identifying the appropriate scaling of the width $k$ with the sample size $n$ in the general bounds. This, in turn, implies consistency of the NEs. Our analysis is based on two key observations. First, to achieve a small approximation error, we would like $\cG_k$ to universally approximate the original function class $\cF$, which needs either width \citep{lu2017expressive} or parameters \citep{stinchcombe1990approximating} to be unbounded. On the other hand, to achieve the parametric estimation rate $n^{-1/2}$, the class $\cG_k$ must not be too large. The effective error bound then relies on finding the appropriate scaling of $k$ (and the uniform parameter norm) with $n$ so that a small approximation error and fast estimation rates are both attained. Numerical results (on synthetic data) validating our theory are also provided.
\subsection{Notation}   Let $\| \cdot \|$ denote the Euclidean norm on $\RR^d$, and $x \cdot y$ designate the inner product.~The Euclidean ball of radius $r\geq 0$ centered at 0 is $B^d(r)$. 
We use $\bar{\RR}:=\RR\cup \{-\infty,\infty\}$ for the extended reals. For $1\leq p<\infty$, the $L^p$ space over $\cX\subseteq\RR^d$ w.r.t. the Lebesgue measure is denoted by $L^p(\cX)$, with $\| \cdot \|_p$ designating the norm. 
We let $(\Omega,\mathcal{A},\PP)$ be the probability space on which all random variables are defined;~$\EE$ denotes the corresponding expectation. The class of Borel probability measures on $\X \subseteq \RR^d$ is denoted by $\cP(\X)$. To stress that an expectation of  $f$ is taken w.r.t. $P\in\cP(\X)$, we write $\EE_P[f]$. We assume that all functions considered henceforth are Borel functions. The essential supremum of a function w.r.t. $P\in\cP(\X)$ is denoted by $\es_P(f)$. For $P,Q\in\cP(\X)$ with $P \ll Q$, i.e., $P$ is absolutely continuous w.r.t. $Q$, we use $\frac{\dd P}{\dd Q}$ for the Radon-Nikodym derivative of $P$ w.r.t. $Q$. For $n \in \mathbb{N}$, $P^{\otimes n}$ denotes the $n$-fold product measure of $P$. For an open set $\Ucal \subseteq \RR^d$ and an integer $m \geq 0$, the class of functions such that all partial derivatives of order $m$ exist and are continuous on $\Ucal$ are denoted by $\mathsf{C}^m(\Ucal)$. In particular, $\mathsf{C}(\Ucal):=\mathsf{C}^0(\Ucal)$ and $\mathsf{C}^{\infty}(\Ucal)$ denotes the class of continuous functions and infinitely differentiable functions on $\Ucal$. The restriction of a function  $f:\RR^d\to\RR$ to a subset $\cX \subseteq \RR^d$ is represented by $f|_\cX$.  For $a,b \in \RR$, $a \vee b:=\max\{a,b\}$ and $a \wedge b:=\min\{a,b\}$. For a multi-index $\boldsymbol{\alpha}=(\alpha_1, \cdots,\alpha_d)$, $D^{\boldsymbol{\alpha}}:=\frac{\partial^{\alpha_1}}{\partial^{\alpha_1} x_1}\cdots \frac{\partial^{\alpha_d}}{\partial^{\alpha_d} x_d}$ denotes the partial derivative operator of order $\abs{\boldsymbol{\alpha}}:=\sum_{j=1}^d \alpha_j$.
\section{Background and Preliminaries} \label{sec:prbform}
  Below, we provide a short background on the central technical ideas used in the paper. 
\paragraph{Statistical distances.} A common variational formulation of a SDs between $P,Q\in\cP(\X)$, $\X \subseteq \RR^d$,~is
\begin{align}
     \mathsf{H}_{\gamma, \mathcal{F}}(P,Q)=\sup_{f \in \mathcal{F}} \mathbb{E}_P[f]-\mathbb{E}_Q[\gamma\circ f], \label{Mestgen2}
\end{align}
where $\gamma:\mathbb{R}\rightarrow \bar{\mathbb{R}}$, and $\mathcal{F}$ is a class of measurable functions $f:\RR^d\rightarrow \mathbb{R}$ for which the expectations are finite. This formulation captures  $\mathsf{f}$-divergences (when $\gamma$ is the convex conjugate of $\mathsf{f})$, IPMs (for $\gamma(x)=x$) as well as the 1-Wasserstein distance (which is an IPM w.r.t. the 1-Lipschitz function class). 
\begin{figure*}[!t]
\setcounter{equation}{10}
\begin{equation} \label{EQ:NN_class}
  \mathcal{G}_k(\mathbf{a}) :=\mspace{-3mu}\left\{ g:\mathbb{R}^d\mspace{-5mu}\rightarrow \mathbb{R}:\begin{aligned}
    &g(x)=\sum_{i=1}^k \beta_i \phi\left(w_i\cdot x+b_i\right) +b_0,~ w_i \in \mathbb{R}^d,~ b_0,b_i,\beta_i \in \mathbb{R},\\ &\max\nolimits_{\substack{i=1,\ldots,k\\j=1,\ldots,d}}\left\{|w_{i,j}|,|b_i|\right\} \leq a_1,~
  |\beta_i| \leq a_2,~ ~i=1,\ldots,k,~|b_0| \leq a_3  \end{aligned}\right\}\mspace{-3mu}.
\end{equation}
\setcounter{equation}{3}
\hrulefill 
\end{figure*}
\paragraph{Approximated function class.} Our approximation result requires the target function with domain $\X$ to have an extension on $\RR^d$, which  belongs to a certain class of functions introduced  in \cite{Barron_1993}.
    \begin{definition}[Barron class] \label{def:barronclass}
Consider a function $f:\mathbb{R}^d \rightarrow \mathbb{R}$ that has a Fourier representation  $ f(x)=\int_{0}^{\infty} e^{i\omega \cdot x} \tilde F(d \omega)$, where $\tilde F(d\omega)$ is a complex Borel measure over $\mathbb{R}^d$ with magnitude $F(d\omega)$ that satisfies
\begin{equation} \label{Cfconstdefval}
   B(f,\X):= \int_{\RR^d} \sup_{x \in \X} \abs{\omega \cdot x} F(d \omega) <\infty.
\end{equation}
 For $c \geq 0$, the Barron class is
\begin{flalign}
   \cB_c(\X)\mspace{-4 mu} :=\mspace{-4 mu} \left\{ f:\mathbb{R}^d \rightarrow \mathbb{R}, B(f,\X) \vee \abs{f(0)} \mspace{-3 mu} \leq \mspace{-3 mu} c\right\}. \label{fourintclas}
\end{flalign}
\end{definition}
For $\tilde f:\X \rightarrow \RR$, define 
\begin{align}
    c_B^\star(\tilde f,\X):=\mspace{-4 mu} \inf\left\{c:\exists~  f \in  \cB_c(\X),~\tilde f= f|_\cX \right\}. \label{barroncoeff}
\end{align}
\paragraph{Stochastic processes.} Our analysis of the estimation error requires the following definitions.

\begin{definition}[Subgaussian process] Let $(\Theta,d)$ be a metric space. A real-valued stochastic process $\{X_{\theta}\}_{\theta \in \Theta}$ with index set $\Theta$ is called subgaussian if it is centered and 
\begin{equation}
    \mathbb{E}\big[e^{t(X_{\theta}-X_{\theta'})}\big] \leq e^{\frac 12 t^2d(\theta,\theta')^2},~ \forall ~\theta,\theta' \in \Theta,~t \geq 0.
\end{equation}
\end{definition}
\begin{definition}[Separable process]
A stochastic process $\{X_{\theta}\}_{\theta \in \Theta}$ on a metric space $(\Theta,d)$ is called separable if there exists a countable set $\Theta_0 \subseteq \Theta $, such that
\begin{align}
 \lim_{\theta' \rightarrow \theta:\ \theta' \in \Theta_0} X_{\theta'} \rightarrow X_{\theta}, ~\forall~ \theta \in \Theta\qquad\mbox{a.s.}
\end{align}
\end{definition}    

\begin{definition}[Covering  number] \label{cov-pack-num}
A set $\Theta'$ is an $\epsilon$-covering for the metric space $(\Theta,d)$  if for every $\theta \in \Theta$, there exists a $\theta' \in \Theta' $ such that $d(\theta,\theta')\leq \epsilon$. 
The $\epsilon$-covering  number is 
\begin{equation}
  N(\Theta,d,\epsilon):=\inf \left\{|\Theta'|:~\Theta' \mbox{ is an } \epsilon \mbox{-covering for }\Theta \right\}.  \notag
\end{equation}
\end{definition}
The next theorem gives a tail bound for the supremum of a subgaussian process in terms of the covering~number. This result is key for our estimation error analysis. 
\begin{theorem}{\cite[Theorem 5.29]{VanHandel-book}}\label{thm:tailineq}
Let $\{X_{\theta}\}_{\theta \in \Theta }$ be a separable subgaussian process on the metric space $(\Theta,d)$. Then, for any $\theta_0 \in \Theta$ and $\delta \geq 0$, we have
\begin{align}
  &  \mathbb{P}\left(\sup_{\theta \in \Theta} X_{\theta}-X_{\theta_0} \geq C \int_{0}^{\infty} \sqrt{\log N(\Theta,d,\epsilon)}d\epsilon+\delta \right) \notag \\
  &\leq Ce^{-\frac{\delta^2}{C\mathsf{diam}(\Theta)^2}}, \label{probtailineqent}
\end{align}
for $\mathsf{diam}(\mspace{-1.5mu}\Theta\mspace{-1.5mu})\mspace{-3mu}:=\mspace{-8mu} \sup\limits_{\theta,\theta' \in \Theta}\mspace{-6mu}d(\mspace{-1mu}\theta\mspace{-3mu},\mspace{-2mu}\theta'\mspace{-1mu})$ and a universal~constant~$C$. 
\end{theorem}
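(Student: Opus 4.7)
The plan is to prove this tail inequality via a \emph{chaining} argument (Dudley-style), leveraging the subgaussian increments to control the supremum through successive refinements of finite approximating nets. Throughout, let $D:=\mathsf{diam}(\Theta)$ and write $\epsilon_n:=D\cdot 2^{-n}$ for $n\geq 0$.

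First, I would exploit separability to reduce the problem to a countable (then finite) one. By definition of separability, there exists a countable $\Theta_\star\subseteq\Theta$ such that $\sup_{\theta\in\Theta}(X_\theta-X_{\theta_0})=\sup_{\theta\in\Theta_\star}(X_\theta-X_{\theta_0})$ almost surely; and by monotone convergence, the tail bound for $\sup_{\theta\in\Theta_\star}$ follows from uniform tail bounds for $\sup_{\theta\in F}$ over finite subsets $F\subseteq\Theta_\star$. Fix such an $F$. For each $n\geq 0$, choose a minimal $\epsilon_n$-cover $\Theta_n\subseteq\Theta$ of cardinality $N(\Theta,d,\epsilon_n)$ (so $\Theta_0=\{\theta_0\}$ suffices since $\epsilon_0=D$), and let $\pi_n:F\to\Theta_n$ send $\theta$ to its nearest point. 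Since $F$ is finite, there exists $N$ with $\pi_N(\theta)=\theta$ for all $\theta\in F$.

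Next I would write the telescoping chain
\begin{equation*}
 X_\theta - X_{\theta_0} \;=\; \sum_{n=0}^{N-1}\bigl(X_{\pi_{n+1}(\theta)} - X_{\pi_n(\theta)}\bigr),
\end{equation*}
and bound each level's supremum using the subgaussian maximum inequality. As $\theta$ ranges over $F$, the pair $(\pi_n(\theta),\pi_{n+1}(\theta))$ takes at most $|\Theta_n|\cdot|\Theta_{n+1}|\leq N(\Theta,d,\epsilon_{n+1})^2$ values, with distance at most $\epsilon_n+\epsilon_{n+1}=3\epsilon_{n+1}$ by the triangle inequality. For centered subgaussian variables $\{Y_i\}_{i\leq M}$ with scale $\sigma$, the standard bound $\mathbb{P}(\max_i Y_i\geq\sigma(\sqrt{2\log M}+u))\leq e^{-u^2/2}$ applied at each scale $n$ with deviation parameter $u_n=u+\sqrt{2(n+1)\log 2}$ yields, via a union bound over $n\geq 0$ and the geometric summability of $e^{-(n+1)\log 2}$,
\begin{equation*}
 \mathbb{P}\!\left(\sup_{\theta\in F}(X_\theta - X_{\theta_0})\geq C\sum_{n\geq 0}\epsilon_{n+1}\sqrt{\log N(\Theta,d,\epsilon_{n+1})}+Du\right)\leq C'e^{-u^2/2}.
\end{equation*}

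Finally, I would replace the dyadic sum by the entropy integral using the standard comparison
\begin{equation*}
 \sum_{n\geq 0}\epsilon_{n+1}\sqrt{\log N(\Theta,d,\epsilon_{n+1})}\;\lesssim\;\int_0^{D}\sqrt{\log N(\Theta,d,\epsilon)}\, d\epsilon \;=\;\int_0^{\infty}\sqrt{\log N(\Theta,d,\epsilon)}\, d\epsilon,
\end{equation*}
where the last equality follows since $N(\Theta,d,\epsilon)=1$ for $\epsilon\geq D$. Setting $\delta=Du$ and absorbing all numerical factors into a single universal constant $C$ produces the stated inequality. Passing from finite $F$ back to the countable $\Theta_\star$ (and thus to $\Theta$) via monotone convergence completes the argument.

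The main technical obstacle is the bookkeeping in the union bound: one must select the level-dependent deviation parameters $u_n$ carefully so that the chaining sum produces the deterministic entropy integral while the residual probabilities sum geometrically to a single exponential of the form $e^{-\delta^2/(C D^2)}$. A secondary subtlety is the reduction from $\sup_{\theta\in\Theta}$ to $\sup_{\theta\in F}$ for finite $F$: this requires separability both to pass from $\Theta$ to a countable dense subset and to ensure that the tail bounds for finite subsets lift, through monotone convergence in $F$, to the full supremum.
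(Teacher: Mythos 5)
The paper does not prove this statement: it is quoted verbatim from the cited source (Van Handel, \emph{Probability in High Dimension}, Theorem~5.29), and no proof is given in the appendix. Your sketch correctly reproduces the standard chaining argument that the cited reference uses to establish Dudley-type entropy bounds with tail control. The essential steps are all present and in the right order: reduction via separability to finite subsets, dyadic nets at scales $\epsilon_n = D2^{-n}$, the telescoping chain, the subgaussian finite-maximum bound applied at each scale with a level-dependent deviation parameter $u_n = u + \sqrt{2(n+1)\log 2}$ so the residual probabilities sum geometrically to a single factor $e^{-u^2/2}$, and the comparison of the dyadic sum to the entropy integral. The substitution $\delta = Du$ then produces the stated form $Ce^{-\delta^2/(C\,\mathsf{diam}(\Theta)^2)}$ after absorbing numerical constants.

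One technical point worth tightening: you assert that for a finite $F$ there is some $N$ with $\pi_N(\theta) = \theta$ for all $\theta \in F$, but a minimal $\epsilon_N$-cover $\Theta_N$ need not contain the points of $F$, so the chain does not literally terminate at $\theta$. The standard repairs are either to adjoin $F$ to the nets at fine scales (which only inflates the levels that are not used in the entropy sum), or to run an infinite chain and observe that $X_{\pi_n(\theta)} \to X_\theta$ almost surely, since separability plus the subgaussian increment bound give $L^2$ (hence in-probability, hence a.s.\ along a subsequence) convergence. Either fix is routine and leaves the rest of your argument, and the final constant structure, unchanged.
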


\begin{figure*}[!t]
\setcounter{equation}{13}
\begin{equation} \label{bndedholderclass}
 \cH_{b,c}^{l,\delta}(\Ucal):=\left\{f \in \mathsf{C}^l(\Ucal):\ \ \begin{aligned} &
 \abs{D^{\boldsymbol{\alpha}}f(x)-D^{\boldsymbol{\alpha}}f(x')} \leq c\norm{x-x'}^{\delta},~\forall x,x' \in \Ucal,  ~\abs{\boldsymbol{\alpha}}=l,\\& \max_{\boldsymbol{\alpha}:\abs{\boldsymbol{\alpha}}  \leq l} \sup_{x \in \Ucal} \abs{D^{\boldsymbol{\alpha}}f(x)} \leq b \end{aligned}\right\}. 
\end{equation}\setcounter{equation}{9}
\hrulefill 
\end{figure*}

\section{Statistical Distances Neural Estimation}
For simplicity of presentation, we henceforth fix $\X=[0,1]^d$, although our results and analysis readily generalize to arbitrary compact supports  $\X \subset \RR^d$. Accordingly,  $B(f,\X)$, $\cB_c(\X)$ and $ c_B^\star(\tilde f,\X)$ are denoted by $B(f)$, $\cB_c$ and $ c_B^\star(\tilde f)$, respectively. We first describe the neural estimation method, followed by two technical results that account for the approximation and the estimation errors. These results are later leveraged to derive effective error bounds for neural estimation of KL and $\chi^2$ divergences, as well as the squared Hellinger distance. All proofs are deferred to the~supplement.
\subsection{Neural Estimation}
Let $P,Q\in\cP(\X)$. Consider a SD $\mathsf{H}_{\gamma,\cF}(P,Q)$ between these distributions (see \eqref{Mestgen2}), and assume that $n$ independently and identically distributed (i.i.d.) samples $X^n:=(X_1,\cdots,X_n)$ and $Y^n:=(Y_1,\cdots,Y_n)$ from $P$ and $Q$, respectively, are available. The NE of $\mathsf{H}_{\gamma,\cF}(P,Q)$ based on a $k$-neuron shallow network (to parametrize the function class $\mathcal{F}$) and the samples $X^n,Y^n$ (to approximate the expected values) is 
\begin{equation}
    \hat{\mathsf{H}}_{\gamma,\mathcal{G}_k(\mathbf{a})}(X^n,Y^n) \mspace{-3 mu}:= \mspace{-5 mu}\sup_{g \in \mathcal{G}_k(\mathbf{a})} \frac 1n \sum_{i=1}^n \mspace{-3 mu}\Big[ g(X_i)- \gamma \circ g(Y_i)\Big], \label{Mest-emp2}
\end{equation}
where $\mathcal{G}_k(\mathbf{a})$ is the NN class defined~in \eqref{EQ:NN_class} above, with parameter bounds specified by  $\mathbf{a}=(a_1,a_2,a_3)$ $\in\RR^3_{\geq 0}$, and activation function $\phi:\mathbb{R}\rightarrow\mathbb{R}$, which is henceforth taken as the logistic sigmoid $\phi(x)=\frac{1}{1+e^{-x}}$. The results that follow extend to any measurable bounded variation sigmoidal (i.e., $\phi(z) \rightarrow 1$ as $z \rightarrow \infty$ and $\phi(z) \rightarrow 0$ as $z \rightarrow -\infty$) activation.

Our goal is to provide absolute-error performance guarantees for this NE, in terms of the approximation error and the statistical estimation error.\footnote{In practice, an optimization error is also present, but its exploration is left for future work.}


\subsection{Sup-norm Function Approximation}
We start with a bound on the approximation error of a target function $\tilde f$ with domain $\X$ for which $ c_B^\star(\tilde f)<\infty$.
\begin{theorem}[Approximation]\label{THM:approximation} \label{supaapprox}~Let~$P,Q\in\cP(\X)$ and consider the NN class  $\cG_k^*\left(c\right):=\cG_k\left( \sqrt{k}\log k, 2k^{-1}c,c\right)$ (see \eqref{EQ:NN_class}) for some $c \geq 0$. 
\setcounter{equation}{11}
Given  $\tilde f:\X \rightarrow \mathbb{R}$ such that $ c_B^\star(\tilde f)\leq c$, there exists $g \in \mathcal{G}_k^*\left(c\right)$ satisfying
\vspace{-1mm}
\begin{align}
\big\|\tilde f-g\big\|_{\infty, P,Q}= O\left( k^{-\frac 12}\right), \label{approxratefin}
\vspace{-3mm}
\end{align}
where  $\norm{f\mspace{-3mu}-\mspace{-3mu}g}_{\infty,P,Q}\mspace{-3mu}:=\es_{P}|f\mspace{-1.5mu}-\mspace{-1.5mu}g| \vee \es_{Q}|f\mspace{-1.5mu}-\mspace{-1.5mu}g|$.   Moreover, for any $\mathbf{a} \in \RR^3_{\geq 0}$, $c>0$,  and $\epsilon>0$, there exists $\tilde f:\X \rightarrow \mathbb{R}$ with $ c_B^\star(\tilde f)\leq c$ such that 
\begin{align}
\inf_{g \in \cG_k(\mathbf{a})}\big\|\tilde f-g\big\|_{2}= \Omega\left( k^{-\big(\frac 12+\frac 1d+\epsilon\big)}\right). \label{approxratelb}
\vspace{-3mm}
\end{align}
The explicit dependence of $d$ and $c$ in the right hand side (R.H.S.) of \eqref{approxratefin} is given in \eqref{finaapperrbarcls}.
\end{theorem}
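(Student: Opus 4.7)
The plan for the upper bound \eqref{approxratefin} is to refine the classical Barron convex-hull argument to the sup-norm while explicitly controlling the network parameters. Starting from the Fourier representation of an extension $f \in \cB_c$ of $\tilde f$, I would rewrite
\begin{align*}
f(x) - f(0) = \int_{\RR^d} \bigl( \cos(\omega \cdot x + \theta(\omega)) - \cos(\theta(\omega)) \bigr) F(d\omega),
\end{align*}
where $\theta$ is the phase of $\tilde F$, exhibiting $f - f(0)$ as an integral of cosine ridge functions against the finite measure $F$. A Maurey--Jones--Barron sampling step then draws $k$ i.i.d.\ directions $\omega_1, \dots, \omega_k \sim F / \|F\|$ and averages the corresponding cosine ridges. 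Promoting the resulting bound from $L^2$ to sup-norm requires a uniform-deviation upgrade: I would apply symmetrization plus Dudley's entropy integral to the $\omega$-indexed empirical process on $\X$, exploiting the Lipschitz constant $\|\omega\|$ of $x \mapsto \cos(\omega \cdot x + \theta)$ together with the Barron bound $\int \|\omega\| F(d\omega) \leq c$ to control the entropy.

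The next step replaces each cosine ridge by a bounded-parameter sigmoid ridge. Using that $\phi(tz) - \ind_{\{z > 0\}}$ decays exponentially fast in $t|z|$ away from $z = 0$, the restriction of $\cos(\omega_i \cdot x + \theta_i)$ to $\X$ can be written as a sum of $O(\sqrt{k})$ rescaled sigmoids with inner scale $t = O(\sqrt{k}\log k)$, at sup-norm cost $O(k^{-1/2})$; this dictates the choice $a_1 = \sqrt{k}\log k$. The outer coefficient bound $a_2 = 2c/k$ emerges from the $1/k$ factor of the Maurey average combined with the total Fourier mass $\leq c$, while the bias bound $a_3 = c$ absorbs $f(0)$, which is $\leq c$ in absolute value by the definition of $\cB_c$. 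Summing the Monte Carlo and cosine-to-sigmoid errors yields \eqref{approxratefin}, with the explicit $(d,c)$ dependence asserted in \eqref{finaapperrbarcls} obtained by carefully tracking constants through both steps.

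For the lower bound \eqref{approxratelb}, the plan is a standard nonlinear $n$-width argument. The class $\cG_k(\mathbf{a})$ is parametrized by $k(d+2)+1$ scalars of magnitude at most $\max\{a_1, a_2, a_3\}$, and since $\phi$ is Lipschitz, its $L^2$ metric entropy grows only polynomially in $k$. I would exhibit a hard instance, namely a smooth bump whose Fourier transform concentrates in a thin shell of radius $\sim k^{1/d}$, rescaled so that $c_B^\star(\tilde f) \leq c$, and invoke Maiorov-type lower bounds for ridge approximation with bounded parameters to force a worst-case $L^2$ error of order $k^{-1/2 - 1/d - \epsilon}$, the $\epsilon$ slack absorbing the logarithmic parameter-dependence from the entropy.

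The principal obstacle is the sup-norm upgrade of the Maurey sampling argument \emph{jointly} with the bounded-parameter constraint. A pointwise expectation bound controls only one $x$ at a time; obtaining uniform control on $\X$ requires coupling an $\epsilon$-net argument with a modulus-of-continuity bound on the random ridge sums, and it is precisely this coupling that dictates the $\sqrt{k}\log k$ scale for $a_1$, with the logarithmic factor tracing back to the cardinality of the $x$-net.
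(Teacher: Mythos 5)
Your plan follows the same Fourier/Barron/Maurey blueprint as the paper, but you reverse the order of the two key steps, and that reversal opens two genuine gaps. First, the neuron budget does not close: if you Monte Carlo sample $k$ cosine ridges and then expand \emph{each} sampled cosine into $O(\sqrt{k})$ bounded-scale sigmoids, the resulting network has $O(k^{3/2})$ neurons, while $\cG_k^*(c)$ has exactly $k$. Second, your sup-norm upgrade of the sampling step is fragile: applying Dudley's entropy integral to the process indexed by $x \in \X$ requires a modulus of continuity for $x \mapsto \cos(\omega_i \cdot x + \theta_i)$ whose Lipschitz constant is $\|\omega_i\|$, and the first-moment control $\int \|\omega\| F(d\omega) \leq c$ does not bound $\max_i \|\omega_i\|$ nor the resulting covering numbers; some truncation of large $\|\omega\|$ would be needed, and it is not clear it would preserve the $k^{-1/2}$ rate without extra log factors that the statement does not allow.

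The paper avoids both issues by following Barron's original order. Using \citet[Lemmas 2--4, Theorem 3]{Barron_1993}, it first places $f_0 := f - f(0)$ in the closed convex hull of \emph{sigmoidal} ridges whose parameters are already confined to the compact set $\Theta_1(\sqrt{k}\log k,\, 2B(f),\, 0)$ (this is where the $\sqrt{k}\log k$ inner scale enters, as the price for approximating the step functions in Barron's Lemma~3 by sigmoids while keeping a probability measure $\mu_k$ on a bounded parameter set), and only then draws $k$ parameter vectors from $\mu_k$. At that point, the map $\theta \mapsto g_\theta(x)$, viewed as a function class indexed by $x$, is a uniformly bounded VC class, so the uniform Donsker theorem \citep[Theorem 2.8.3, Corollary 2.2.8]{AVDV-book} delivers a sup-over-$x$ deviation $\hat{c}_d\, B(f)\, k^{-1/2}$ with a constant depending only on $d$, entirely sidestepping the $\|\omega\|$ blow-up. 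Two smaller remarks: the paper must also move the discontinuities of the intermediate step functions onto common continuity points of $P$ and $Q$ because the norm is $\es_P \vee \es_Q$ rather than a true sup-norm (see the footnote in the proof), a subtlety your sketch omits; and for the lower bound, the paper simply invokes \citet[Theorem 3]{Barron-1992} after checking that a suitable Sobolev ball is contained in the Barron ball, which is compatible with your Maiorov-type ridge lower bound, with the $\epsilon$ slack absorbing logarithms exactly as you indicate.
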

The above theorem states that a $k$-neuron shallow NN can approximate a function $\tilde f$ on  $\X$ within an $O(k^{-1/2})$ gap in the uniform norm, provided  $\tilde f$ is the restriction of some $f$ from the Barron class. The upper and lower bounds in \eqref{approxratefin} and \eqref{approxratelb} differ by $k^{-(1/d+\epsilon)}$, which becomes negligible for large $d$ and small $\epsilon$. Also observe that the lower bound is in terms of $L^2$ norm which implies a lower bound w.r.t. the $L^{\infty}$ norm. 

\begin{remark}[Relation to previous results]
A result reminiscent to Theorem \ref{THM:approximation} appears in \cite{Barron-1992}, but some technical details had to be adapted to apply the bound to neural estimation of SDs. Theorem \ref{supaapprox} 
generalizes \citet[Theorem 2]{Barron-1992} and \citet[Theorem 2.2]{Yukich-1995} from unbounded NN weights and bias parameters to bounded ones. We note that while \citet[Theorem 2.2]{Yukich-1995} allows unbounded  parameters (input weight and bias), a more general problem of~approximating a function and its derivatives is treated therein. Here we only consider the approximation of the function itself.
\end{remark}
We next show that a sufficiently smooth H\"{o}lder function on $\X$ is the restriction of some function in the Barron class. To that end we first define the H\"{o}lder function class.
\begin{definition}[Holder class]
  For $b,c,\delta \geq 0$, an integer $l \geq 0$, and an open set $\Ucal \subseteq \RR^d$, the bounded holder class $\cH_{b,c}^{l,\delta}(\Ucal)$ is defined in \eqref{bndedholderclass} above.
\end{definition}
We have the following universal approximation property for H\"{o}lder functions.
\begin{cor}[Approximation of H\"{o}lder functions] \label{cor:bndfourcoeff}
Given  a function  $\tilde f:\X \rightarrow \RR$, suppose there exists an open set $~\Ucal \supset  \X$,  $b,c,\delta \geq 0$, and $ f \in  \cH_{b,c}^{s,\delta}(\Ucal)$, $s:=\lfloor\frac{d}{2}\rfloor+2$,  such that $\tilde f= f|_\cX$.  
Then, there exists $g \in \mathcal{G}_k^*\left(\bar c_{b,c,d}\right)$ such~that
\setcounter{equation}{14}
\begin{align}
\vspace{-4mm}
\big\|\tilde f-g\big\|_{\infty, P,Q}= O\left(k^{-\frac 12}\right),
\vspace{-4mm}
\end{align}
where $\bar c_{b,c,d}$ is given in \eqref{constapproxhold} in Appendix \ref{cor:bndfourcoeff-proof}.
\end{cor}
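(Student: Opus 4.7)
The plan is to reduce Corollary \ref{cor:bndfourcoeff} to Theorem \ref{THM:approximation} by showing that any $f \in \cH_{b,c}^{s,\delta}(\Ucal)$ with $s = \lfloor d/2 \rfloor + 2$ has an extension $\bar f : \RR^d \to \RR$ lying in the Barron class $\cB_{\bar c_{b,c,d}}$ for an explicit constant $\bar c_{b,c,d}$ depending only on $b,c,d$. Once this is established, the corollary follows immediately by applying Theorem \ref{THM:approximation} to $\tilde f = \bar f|_{\cX}$.

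The first step is the extension. Since $\Ucal$ is an open neighborhood of the compact cube $\cX = [0,1]^d$, I would pick a smooth cutoff function $\eta \in \mathsf{C}^\infty(\RR^d)$ with $\eta \equiv 1$ on $\cX$ and $\supp{\eta}$ contained in a compact subset of $\Ucal$, then set $\bar f := \eta \cdot f$ on $\Ucal$ and $\bar f \equiv 0$ outside. The resulting $\bar f$ is compactly supported and of class $\mathsf{C}^s$ on $\RR^d$, with all partial derivatives up to order $s$ bounded by a constant $c'$ depending only on $b$, $c$, and $\eta$ (and thus on $d$, once a canonical cutoff is fixed). In particular, $|\bar f(0)| \leq b$.

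The second step is the Fourier-analytic bound. Let $\hat{\bar f}(\omega) := \int_{\RR^d} \bar f(x) e^{-i \omega \cdot x}\dd x$ so that the magnitude measure in Definition \ref{def:barronclass} is $F(d\omega) = (2\pi)^{-d}|\hat{\bar f}(\omega)|\dd\omega$. Since $\cX = [0,1]^d$, one has $\sup_{x \in \cX} |\omega \cdot x| \leq \sqrt{d}\,\|\omega\|$, so
\begin{align}
B(\bar f,\cX) \leq \sqrt{d}\,(2\pi)^{-d}\int_{\RR^d}\|\omega\|\,|\hat{\bar f}(\omega)|\dd\omega.
\end{align}
By Cauchy--Schwarz,
\begin{align}
\int_{\RR^d}\|\omega\|\,|\hat{\bar f}(\omega)|\dd\omega \leq \left(\int_{\RR^d}(1+\|\omega\|^2)^{-(s-1)}\dd\omega\right)^{\!1/2}\!\left(\int_{\RR^d}\|\omega\|^2(1+\|\omega\|^2)^{s-1}|\hat{\bar f}(\omega)|^2\dd\omega\right)^{\!1/2}\!.
\end{align}
The first factor is finite precisely because $2(s-1) > d$, which holds since $s = \lfloor d/2 \rfloor + 2$. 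The second factor equals the Sobolev seminorm $\|\bar f\|_{H^{s}}$ up to an absolute constant and is controlled by $\sum_{|\boldsymbol{\alpha}| \leq s}\|D^{\boldsymbol{\alpha}}\bar f\|_2$ via Plancherel; since $\bar f$ has compact support of volume depending only on $\Ucal$ and its derivatives are bounded by $c'$, this quantity is bounded by a constant $\bar c_{b,c,d}$ depending only on $b,c,d$. Combining the two factors yields $B(\bar f,\cX) \vee |\bar f(0)| \leq \bar c_{b,c,d}$, so $\bar f \in \cB_{\bar c_{b,c,d}}$.

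The third step is to apply Theorem \ref{THM:approximation} to $\tilde f = \bar f|_{\cX}$ with $c = \bar c_{b,c,d}$, which gives $g \in \cG_k^*(\bar c_{b,c,d})$ with $\|\tilde f - g\|_{\infty,P,Q} = O(k^{-1/2})$, as required. The main technical obstacle is the Sobolev/Fourier estimate of the second step: one must carefully verify that the condition $s > d/2 + 1$ is exactly what makes the weighted $L^2$ norm of $\omega \mapsto (1+\|\omega\|^2)^{-(s-1)/2}$ integrable, and track how the bounds $b$, $c$, and the Hölder exponent $\delta$ propagate through the cutoff product rule into the constant $\bar c_{b,c,d}$ (the $\delta$-Hölder continuity of the top derivatives is not needed for the Barron bound and only ensures that $\bar f$ lies in $\mathsf{C}^s$ with the stated control). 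Everything else is routine calculus plus an invocation of the already-proven approximation theorem.
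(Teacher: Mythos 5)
Your overall plan is correct in outline and matches the paper's strategy: extend $\tilde f$ to a suitable function on $\RR^d$, bound its Barron coefficient via a Cauchy--Schwarz plus Plancherel argument (where $s=\lfloor d/2\rfloor+2 > d/2+1$ makes the weight $(1+\|\omega\|^{2(s-1)})^{-1}$ integrable), and invoke Theorem~\ref{THM:approximation}. The Fourier-analytic step is essentially the paper's Lemma~\ref{propsuffcond}.

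However, there is a genuine gap in your extension step, and it undermines the claim that the final constant depends only on $b,c,d$. You propose to pick a smooth cutoff $\eta$ with $\eta\equiv 1$ on $\X$ and $\supp{\eta}$ contained in a compact subset of $\Ucal$, then set $\bar f:=\eta\cdot f$ and assert that all derivatives of $\bar f$ up to order $s$ are bounded by ``a constant $c'$ depending only on $b$, $c$, and $\eta$ (and thus on $d$, once a canonical cutoff is fixed).'' But $\eta$ cannot be canonical: its support must lie in $\Ucal$, and $\Ucal$ is an arbitrary open neighborhood of $\X$ quantified existentially in the corollary. If $\Ucal$ is a very thin shell (width $\varepsilon$) around $\X$, any such cutoff must transition from $1$ to $0$ over distance $<\varepsilon$, so $\|D^{\boldsymbol{\alpha}}\eta\|_\infty \gtrsim \varepsilon^{-|\boldsymbol{\alpha}|}$. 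Consequently the derivative bound $c'$, and hence your $\bar c_{b,c,d}$, secretly depends on $\Ucal$, contradicting the statement (and the explicit formula in \eqref{constapproxhold}, which depends only on $b,c,d$). The paper sidesteps this by using the H\"older hypothesis to \emph{first} extend $D^{\boldsymbol{\alpha}}f$ for $|\boldsymbol{\alpha}|=s$ from $\Ucal$ to all of $\RR^d$ via the McShane-type formula \eqref{holdcontext}, obtaining controlled derivative bounds on a \emph{fixed-width} neighborhood $\Ucal'$ of $\X$ (width $1$, independent of $\Ucal$), and only then multiplies by a canonical mollified indicator $f_{\mathsf C}$ supported in $\Ucal'$. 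This is also why your remark that ``the $\delta$-H\"older continuity of the top derivatives is not needed for the Barron bound'' is not quite right: the H\"older condition is what makes a $\Ucal$-independent extension possible, which is in turn what makes the constant $\Ucal$-independent. To repair your proof, replace the cutoff-only extension with the two-stage extension (H\"older extension to $\RR^d$, then fixed cutoff), after which your Fourier/Sobolev estimate goes through unchanged.
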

\subsection{Estimation of Parameterized Distances}

We next bound the error of estimating the parametrized SD $\mathsf{H}_{\gamma, \mathcal{G}_k(\mathbf{a})}(P,Q)$ (i.e., \eqref{Mestgen2} for a NN function class) by its empirical version from \eqref{Mest-emp2}. Throughout this section we assume that $X^n$ and $Y^n$ are, respectively, i.i.d. samples from $P$ and $Q$. 
\begin{theorem}[Empirical estimation error tail bound] \label{empesterrbnd}
Let $P,Q\in\cP(\X)$. 
Assume that $\mathbf{a}\in\RR^3_{\geq 0}$, $\mathcal{G}_k(\mathbf{a})$, and $\gamma:\RR\to\bar{\RR}$ are such that $\mathsf{H}_{\gamma,\mathcal{G}_k(\mathbf{a})}(P,Q)<\infty$ \revised{and \begin{align}
    \bar{\gamma}'_{\mathcal{G}_{k}(\mathbf{a})}:=\sup_{\substack{x \in \X, \\g \in \mathcal{G}_k(\mathbf{a})} }\gamma' \circ g(x)<\infty, \label{maxdergamma}
\end{align}}%
where $\gamma'$ is the derivative of $\gamma$. Then, for  the universal constant $C$ from Theorem \ref{thm:tailineq} and any $\delta>0$, we have
\begin{flalign}
   & \mathbb{P}\mspace{-1mu}\Big(\mspace{-2mu}\abs{ \hat{\mathsf{H}}_{\gamma,\mathcal{G}_k(\mathbf{a})}\mspace{-1.5mu}(X^n\mspace{-3mu},\mspace{-3mu}Y^n\mspace{-1.5mu})\mspace{-2mu}-\mspace{-2mu}\mathsf{H}_{\gamma, \mathcal{G}_k(\mathbf{a})}\mspace{-2mu}(P\mspace{-3mu},Q)}\mspace{-2mu}\geq\mspace{-2mu} \delta\mspace{-2mu}+\mspace{-2mu}C E_{k,\mathbf{a},n,\gamma}\Big) \notag \\
    &\qquad\qquad\qquad\qquad\qquad\quad\quad\ \ \leq  2C e^{-\frac{n\delta^2}{V_{k,\mathbf{a},\gamma}}}, \label{bndesterremp} &&
\end{flalign}
where \revised{$E_{k,\mathbf{a},n,\gamma}=O\left(n^{-1/2}\right)$} and explicit expressions for $V_{k,\mathbf{a},\gamma}$ and $E_{k,\mathbf{a},n,\gamma}$ are given in \eqref{Vkconstdef}-\eqref{Ekconstdef} in Appendix~\ref{empesterrbnd-proof}. 
\end{theorem}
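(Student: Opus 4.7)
The plan is to parametrize $g\in\mathcal{G}_k(\mathbf{a})$ by $\theta=\big((w_i,b_i,\beta_i)_{i=1}^{k},b_0\big)$ ranging over the compact Euclidean box $\Theta\subset\RR^{k(d+2)+1}$ determined by $\mathbf{a}$, and to apply Theorem~\ref{thm:tailineq} to the centered empirical process
\[
Z_\theta:=\frac{1}{n}\sum_{i=1}^n\big[g_\theta(X_i)-\gamma\circ g_\theta(Y_i)\big]-\EE_P[g_\theta]+\EE_Q[\gamma\circ g_\theta],\qquad \theta\in\Theta.
\]
A standard sup-comparison argument gives $|\hat{\mathsf{H}}_{\gamma,\mathcal{G}_k(\mathbf{a})}(X^n,Y^n)-\mathsf{H}_{\gamma,\mathcal{G}_k(\mathbf{a})}(P,Q)|\leq\sup_{\theta}Z_\theta\vee\sup_{\theta}(-Z_\theta)$, so handling the two one-sided suprema separately and union-bounding will produce the factor of $2$ on the right-hand side of \eqref{bndesterremp}.

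Next I would verify that $\{Z_\theta\}_{\theta\in\Theta}$ is a separable subgaussian process. Compactness of $\X=[0,1]^d$, boundedness of $\phi$, and the bounded parameters of $\mathcal{G}_k(\mathbf{a})$ ensure that every $g_\theta$ takes values in a bounded interval $I\subset\RR$; combined with $\bar\gamma'_{\mathcal{G}_k(\mathbf{a})}<\infty$ and $\mathsf{H}_{\gamma,\mathcal{G}_k(\mathbf{a})}(P,Q)<\infty$, this implies that $\gamma$ is bounded and Lipschitz on $I$ with constant $L_\gamma\leq\bar\gamma'_{\mathcal{G}_k(\mathbf{a})}$. Since $Z_\theta-Z_{\theta'}$ is an average of $n$ independent centered random variables whose individual ranges are at most $(1+L_\gamma)\|g_\theta-g_{\theta'}\|_{\infty,\X}$, Hoeffding's lemma yields
\[
\EE\left[e^{t(Z_\theta-Z_{\theta'})}\right]\leq \exp\left(\frac{t^2}{2}\,d(\theta,\theta')^2\right),\qquad d(\theta,\theta')^2:=\frac{(1+L_\gamma)^2\,\|g_\theta-g_{\theta'}\|_{\infty,\X}^2}{4n}.
\]
Separability of the process follows from the a.s.\ continuity of $\theta\mapsto Z_\theta$ on the compact $\Theta$.

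The core estimate is then the covering number $N(\Theta,d,\epsilon)$. Boundedness of $\phi$ and $\phi'$ together with $\X=[0,1]^d$ yields a uniform Lipschitz bound $\|g_\theta-g_{\theta'}\|_{\infty,\X}\leq M_{k,\mathbf{a},d}\|\theta-\theta'\|_\infty$ with an explicit $M_{k,\mathbf{a},d}$ polynomial in $k$, $\mathbf{a}$, and $d$; combined with the volumetric covering of the Euclidean box $\Theta$, this gives
\[
\log N(\Theta,d,\epsilon)\leq \big(k(d+2)+1\big)\log\left(\frac{C'_{k,\mathbf{a},d,\gamma}}{\sqrt{n}\,\epsilon}\right)
\]
up to $\mathrm{diam}(\Theta)=O(n^{-1/2})$, beyond which the covering is trivial. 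Evaluating the entropy integral on this range produces the $O(n^{-1/2})$ term $CE_{k,\mathbf{a},n,\gamma}$, while $\mathrm{diam}(\Theta)^2$ contributes the $V_{k,\mathbf{a},\gamma}/n$ variance proxy appearing in the exponent.

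Finally I would choose $\theta_0$ to be the zero network so that $Z_{\theta_0}\equiv 0$, invoke Theorem~\ref{thm:tailineq} for both $Z_\theta$ and $-Z_\theta$, and combine the two tails by a union bound to conclude \eqref{bndesterremp}. The main technical obstacle will be the careful bookkeeping of the $k$, $d$, $\mathbf{a}$, $\bar\gamma'_{\mathcal{G}_k(\mathbf{a})}$ dependencies through $M_{k,\mathbf{a},d}$ and $L_\gamma$, so that the resulting explicit constants $V_{k,\mathbf{a},\gamma}$ and $E_{k,\mathbf{a},n,\gamma}$ match the forms later used to balance approximation and estimation in the applications to KL, $\chi^2$, and squared Hellinger neural estimation.
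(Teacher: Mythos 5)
Your proposal follows the same overall architecture as the paper's proof: parametrize $\mathcal{G}_k(\mathbf{a})$ by $\theta$, form the centered empirical process $Z_\theta$, establish subgaussian increments via Hoeffding's lemma, bound the covering number of the index space, evaluate the entropy integral, apply Theorem~\ref{thm:tailineq} to both $Z_\theta$ and $-Z_\theta$, and union-bound the two tails. The substantive difference lies in the covering step. The paper bounds the increment by a quantity that depends only on the output-layer weights, namely it writes $|g_\theta(x)-g_{\theta'}(x')|\le\sum_{i=1}^k|\beta_i-\beta_i'|$ (invoking only $0\le\phi\le 1$), and accordingly its pseudometric $d_{k,\mathbf{a},n,\gamma}(\theta,\theta')$ is a multiple of $\|\boldsymbol{\beta}(\theta)-\boldsymbol{\beta}(\theta')\|$ alone, so the covering is of the $k$-dimensional cube $[-a_2,a_2]^k$. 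You instead use a genuine Lipschitz bound on $\theta\mapsto g_\theta$ in all $k(d+2)+1$ coordinates (which requires $\|\phi'\|_\infty<\infty$ in addition) and cover the full parameter box $\Theta$.

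Your route is the more robustly justified of the two: the paper's $\boldsymbol{\beta}$-only pseudometric vanishes on pairs $\theta\ne\theta'$ that share the same $\boldsymbol{\beta}$ but differ in inner weights, biases, or offset, while $Z_\theta\ne Z_{\theta'}$ a.s.\ for such pairs; this is in tension with the subgaussian increment condition, which forces $Z_\theta=Z_{\theta'}$ whenever $d(\theta,\theta')=0$. The cost of your approach is a higher-dimensional covering and an explicit $a_1$-dependence through the Lipschitz modulus $M_{k,\mathbf{a},d}$, so the resulting $E_{k,\mathbf{a},n,\gamma}$ and $V_{k,\mathbf{a},\gamma}$ carry larger polynomial-in-$(k,d,a_1)$ prefactors than those in \eqref{Vkconstdef}--\eqref{Ekconstdef}. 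This preserves the $O(n^{-1/2})$ rate at fixed $k$ but would shift the $k$-vs.-$n$ balance in the downstream KL, $\chi^2$, and $H^2$ bounds, so the stated exponents there would need to be re-derived under your constants. Two smaller points of bookkeeping: the paper avoids the $\sqrt{\log(\cdot)}$ factor in the entropy integral by first applying $\log(1+x)\le x$ inside the square root, which you should emulate to get a clean closed form; and the $X_i$- and $Y_i$-summands of $Z_\theta-Z_{\theta'}$ are $2n$ independent centered terms, so the Hoeffding variance proxy scales with $1+L_\gamma^2$ rather than $(1+L_\gamma)^2/4$, though this only changes constants.
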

The proof of Theorem \ref{empesterrbnd} (see Appendix \ref{empesterrbnd-proof}) involves upper bounding the estimation error by a separable subgaussian process and invoking Theorem \ref{thm:tailineq}.

\begin{remark}[NN distances]
The SD $\mathsf{H}_{\gamma, \mathcal{G}_k(\mathbf{a})}(P,Q)$ is the so-called NN distance, studied in \cite{arora2017generalization,zhang2018discrimination} in the context of GANs. Theorem \ref{empesterrbnd} can be understood as a sample complexity bound for NN distance estimation from data. Taking $\delta$ of order $n^{-1/2}$, the estimation error attains the parametric rate with high probability.
\end{remark}

\subsection{$\mathsf{f}$-Divergence Neural Estimation}\label{SUBSEC:KL_NE}
Having Theorems \ref{supaapprox}-\ref{empesterrbnd} and Corollary \ref{cor:bndfourcoeff}, we analyze neural estimation of three important SDs: KL divergence, $\chi^2$ divergence and squared  Hellinger distance.
\setcounter{equation}{22}
\begin{figure*}[!t]
\vspace{-3mm}
\begin{equation}
  \mspace{-10 mu}  \mathcal{L}_{\mathsf{KL}}(b,c)\mspace{-3 mu}:=\mspace{-3 mu}\left\{\mspace{-3 mu}(P,Q)\mspace{-3 mu} \in\mspace{-3 mu} \mathcal{P}_{\mathsf{KL}}(\X)  \mspace{-2 mu} : \begin{aligned}    & \exists ~  f,\bar f \in \cH_{b,c}^{s,\delta}(\Ucal)\mbox{ for some $\delta\geq 0$ and open set }\Ucal\supset \mspace{-3 mu} \X ~\mbox{ s.t.}\mspace{1 mu}\log p=f|_\X,\\&\log q=\bar f|_\X\end{aligned} \right\}. \label{classklerrbnd}
\end{equation}
\hrulefill 
\setcounter{equation}{17}
\end{figure*}
\subsubsection*{KL Divergence} \label{sec:KLdiv}
The KL divergence between $P,Q\in\cP(\X)$ with $P\ll Q$ is $\kl{P}{Q}:=\mathbb{E}_P\left[\log \left(\frac{\dd P}{\dd Q}\right)\right]$ (and infinite when $P$ is not absolutely continuous w.r.t. $Q$). A variational form for $\kl{P}{Q}$ is obtained via Legendre-Fenchel duality, yielding: 
\begin{equation}
\kl{P}{Q}=\sup_{f :\X \rightarrow \mathbb{R}}\mathbb{E}_P[f]-\mathbb{E}_Q\left[e^{f}-1\right], \label{CC-charact}
\end{equation}
where the supremum is over all measurable functions such that expectations are finite. This fits the framework of \eqref{Mestgen2} with $\gamma(x)=\gamma_{\mathsf{KL}}(x):=e^x-1$. The supremum in \eqref{CC-charact} is achieved by $f_{\mathsf{KL}}:=\log\left(\frac{\dd P}{\dd Q}\right)$.

Let $\hat{D}_{\mathcal{G}_k(\mathbf{a}_k)}(X^n,Y^n):=\hat{\mathsf{H}}_{\gamma_{\mathsf{KL}},\mathcal{G}_k(\mathbf{a}_k)}(X^n,Y^n)$ be a NE of $\kl{P}{Q}$, where $\mathbf{a}_k\in\RR^3_{\geq 0}$ for all $k\in\NN$. The effective error achieved by the estimator can be bounded as the sum of the approximation and estimation errors. 

To present error bounds, we require a few definitions.  
 Let $\mathcal{P}_{\mathsf{KL}}(\X)$ be the set of all pairs $(P,Q) \in \mathcal{P}(\X) \times \mathcal{P}(\X)$ such that $P \ll Q$ and $\kl{P}{Q}<\infty$, and set  
\begin{equation}
 \cI(m):= \left\{  f:\X \rightarrow \RR,~ c_B^\star(f)\vee \norm{f}_{\infty} \leq m  \right\}.\label{EQ:I_set}
\end{equation}
As a consequence of  proof of Corollary  \ref{cor:bndfourcoeff}, $\cI(m)$ is non-empty since  it contains any $f\in\cH_{b,c}^{l,\delta}(\Ucal)$ for some $ \Ucal \supseteq \X $ and appropriately chosen parameters $l,b,c,\delta$. For any $m$, the aforementioned condition is satisfied, e.g., by Gaussian densities with suitable parameters.

The following theorem establishes the consistency of  $\hat{D}_{\mathcal{G}_k(\mathbf{a}_k)}(X^n,Y^n)$ and bounds the effective (approximation and estimation) error in terms of the NN and sample sizes, which reveals the tradeoff between them.
\begin{theorem}[KL neural estimation]\label{strongcons}
Let  
$(P,Q) \in \mathcal{P}_{\mathsf{KL}}(\X)$. For any $\alpha>0:$
\begin{enumerate}[label = (\roman*),leftmargin=15 pt]
 \item If  $f_{\mathsf{KL}} \in \mathsf{C}\left(\X\right)$, then 
 for  $\{k_n\}_{n \in \NN}$, $n$ such that $k_n\ \rightarrow  \infty$ and $k_n \leq (\frac 12-\alpha) \log n$, 
 \begin{equation}
 \mspace{-20 mu}
   \hat{D}_{\mathcal{G}_{k_n}(\mathbf{1})}(X^n,Y^n)    \xrightarrow[n\rightarrow \infty]{} \kl{P}{Q},\quad \mathbb{P}-\mbox{a.s.} \label{finbndascon}
 \end{equation}
 \item 
Suppose there exists an $M$ such that $f_{\mathsf{KL}} \in \mathcal{I}(M)$. Then, for $k$ and $n$   such that $k^3=O\left(n^{1-\alpha}\right)$,
\begin{flalign}
 &  \mathbb{E}\left[  \abs{\hat{D}_{\mathcal{G}_{k}^*(0.5 \log k)}(X^n,Y^n)  -\kl{P}{Q}}\right] \notag \\
 &  \qquad \qquad \qquad   = O\left(k^{-\frac{1}{2}}+ k^{\frac 32} n^{-\frac 12}\right).\label{KLeffbndsimp} &&
\end{flalign} 
\end{enumerate}
\end{theorem}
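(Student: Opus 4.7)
\textbf{Proof plan for Theorem \ref{strongcons}.} The starting point for both parts is a triangle-inequality decomposition of the effective error into an approximation component and a statistical estimation component:
\begin{align*}
\big|\hat{D}_{\mathcal{G}}(X^n,Y^n)-\kl{P}{Q}\big|
&\leq \big|\mathsf{H}_{\gamma_{\mathsf{KL}},\mathcal{G}}(P,Q)-\kl{P}{Q}\big|\\
&\quad + \big|\hat{D}_{\mathcal{G}}(X^n,Y^n)-\mathsf{H}_{\gamma_{\mathsf{KL}},\mathcal{G}}(P,Q)\big|,
\end{align*}
where $\cG$ denotes the NN class of interest. The first term will be handled by Theorem \ref{THM:approximation} (for (ii)) or by universal approximation of continuous functions (for (i)), while the second will be bounded using Theorem \ref{empesterrbnd}. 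The crux is choosing the parameter envelope so that both rates combine favorably under the exponential $\gamma_{\mathsf{KL}}$.

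For the approximation error in part (ii), the variational characterization \eqref{CC-charact} and its maximizer $f_{\mathsf{KL}}=\log(\dd P/\dd Q)$ yield, for any $g\in\mathcal{G}_k^*(c_k)$,
\begin{equation*}
0\leq \kl{P}{Q}-\mathsf{H}_{\gamma_{\mathsf{KL}},\mathcal{G}_k^*(c_k)}(P,Q)\leq \big|\EE_P[f_{\mathsf{KL}}-g]\big|+\big|\EE_Q\big[e^{f_{\mathsf{KL}}}-e^{g}\big]\big|.
\end{equation*}
Since $f_{\mathsf{KL}}\in\cI(M)$ implies $c_B^\star(f_{\mathsf{KL}})\leq M$, for $k$ large enough that $0.5\log k\geq M$ we have $\mathcal{G}_k^*(M)\subseteq\mathcal{G}_k^*(0.5\log k)$, and Theorem \ref{THM:approximation} supplies $g\in\mathcal{G}_k^*(M)$ with $\|f_{\mathsf{KL}}-g\|_{\infty,P,Q}=O(k^{-1/2})$. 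The linear piece is then $O(k^{-1/2})$, and for the exponential piece I would use $|e^a-e^b|\leq e^{|a|\vee|b|}|a-b|$; because $|f_{\mathsf{KL}}|\leq M$ and any $g\in\mathcal{G}_k^*(M)$ obeys the uniform bound $|g|\leq 3M$ (since $|\beta_i|\leq 2k^{-1}M$, $\phi\leq 1$, and $|b_0|\leq M$), this piece is also $O(k^{-1/2})$.

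For the estimation error I would invoke Theorem \ref{empesterrbnd} with $\gamma_{\mathsf{KL}}'(x)=e^x$. The essential computation is the class-wide bound $\bar{\gamma}'_{\mathcal{G}_k^*(0.5\log k)}$: any $g\in\mathcal{G}_k^*(0.5\log k)$ satisfies $|g(x)|\leq 1.5\log k$, so $\bar{\gamma}'_{\mathcal{G}_k^*(0.5\log k)}\leq k^{3/2}$. Substituting this into the explicit expressions for $V_{k,\mathbf{a},\gamma}$ and $E_{k,\mathbf{a},n,\gamma}$ of Theorem \ref{empesterrbnd} produces a sub-Gaussian tail of the form
\begin{equation*}
\PP\Big(\big|\hat{D}_{\mathcal{G}_k^*(0.5\log k)}(X^n,Y^n)-\mathsf{H}_{\gamma_{\mathsf{KL}},\mathcal{G}_k^*(0.5\log k)}(P,Q)\big|\geq \delta+O\big(k^{3/2}n^{-1/2}\big)\Big)\leq 2Ce^{-n\delta^2/V_{k}},
\end{equation*}
with $V_k$ polynomial in $k$ and, specifically, proportional to $k^3$. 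Tail integration $\EE|X|=\int_0^\infty\PP(|X|>t)\dd t$ then yields an expected estimation error of $O(k^{3/2}n^{-1/2})$. Adding this to the $O(k^{-1/2})$ approximation bound and enforcing $k^3=O(n^{1-\alpha})$ gives \eqref{KLeffbndsimp}.

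Part (i) uses the same decomposition with $\mathcal{G}_{k_n}(\mathbf{1})$. For the approximation term, since $f_{\mathsf{KL}}\in\mathsf{C}(\X)$ on compact $\X$ is bounded, the universal approximation property of sigmoidal shallow networks over $\bigcup_k\mathcal{G}_k(\mathbf{1})$ as $k\to\infty$ drives the approximation gap (and the associated exponential gap, handled as above) to zero. For the estimation term, because any $g\in\mathcal{G}_{k_n}(\mathbf{1})$ satisfies $|g|\leq k_n+1$, we have $\bar{\gamma}'_{\mathcal{G}_{k_n}(\mathbf{1})}\leq e^{k_n+1}$, hence $V_{k_n,\mathbf{1},\gamma_{\mathsf{KL}}}=O(e^{2k_n})$. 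Under $k_n\leq(1/2-\alpha)\log n$ this variance is $O(n^{1-2\alpha})$, so Theorem \ref{empesterrbnd} yields tail probabilities that are summable in $n$ for any fixed $\delta>0$; a Borel--Cantelli argument then gives a.s.\ convergence. The principal obstacle throughout is the exponential measurement function: in contrast to the IPM regime where $\gamma$ is linear, here $e^{g}$ can amplify the NN outputs catastrophically, and the whole argument hinges on selecting $c_k=0.5\log k$ so that $e^{|g|}$ remains polynomial in $k$ (giving the $k^{3/2}$ estimation factor) while $c_k\to\infty$ eventually covers the Barron radius $M$ (preserving the $k^{-1/2}$ approximation rate).
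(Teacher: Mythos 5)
Your proof plan matches the paper's route essentially step for step: the same triangle-inequality split into approximation and estimation error, the same use of Theorem~\ref{THM:approximation} (via the containment $\mathcal{G}_k^*(M)\subseteq\mathcal{G}_k^*(0.5\log k)$ once $0.5\log k\geq M$) together with a Lipschitz control of $e^x$ for the approximation piece, the same invocation of Theorem~\ref{empesterrbnd} with a class-wide bound on $\bar{\gamma}'$ for the estimation piece, and the same Borel--Cantelli argument from summable tails for part~(i). The paper packages the estimation half into an intermediate Lemma (Lemma~\ref{lem:consicomp}), and for the exponential piece factors as $e^{f_{\mathsf{KL}}}\lvert 1-e^{g-f_{\mathsf{KL}}}\rvert$ with $L_{P,Q}\leq e^M$ rather than your $e^{|a|\vee|b|}\lvert a-b\rvert$, but these are cosmetic. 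One small accounting slip worth flagging: from your own bound $\bar{\gamma}'_{\mathcal{G}_k^*(0.5\log k)}\leq k^{3/2}$ and the definition $a_2=2k^{-1}\cdot 0.5\log k$, the explicit formula $E_{k,\mathbf{a},n,\gamma}\propto n^{-1/2}k^{3/2}a_2(\bar{\gamma}'+1)$ gives $O\bigl(k^{2}\log k\,n^{-1/2}\bigr)$, not $O(k^{3/2}n^{-1/2})$ as you assert; the paper itself only obtains $k^{3/2}$ by writing $ka_{2,k}=m_k$ rather than $2m_k$ (which is a factor-of-two mismatch with the definition of $\mathcal{G}_k^*$). This polylogarithmic-and-half-power discrepancy is immaterial to the headline rate and to consistency, but it is a point where the exponent bookkeeping needs to be done carefully.
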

The consistency result (Part $(i)$) in the above theorem uses 
the fact that $\mathcal{G}_{k_n}(\mathbf{1})$ is a universal approximator for the class of continuous functions on compact sets as $k_n \rightarrow \infty$. The error bound in \eqref{KLeffbndsimp}  utilizes Theorems \ref{supaapprox}-\ref{empesterrbnd} to bound the effective error as the sum of the approximation and estimation errors. From \eqref{approxratefin}, the former error is $O(k^{-1/2})$ if $c_B^\star\left(f_{\mathsf{KL}}\right) \leq M$ and  $\mathbf{a}$ is such that $\cG_k^*\left(M\right) \subseteq \mathcal{G}_k(\mathbf{a})$. 
As $M$ is often unavailable (due to $P$ and $Q$ being unknown), in order to achieve the above error, we  take  $\mathcal{G}_k(\mathbf{a}_k)=\cG_k^*(m_k)$ for some increasing positive  sequence $\{m_k\}_{k \in \NN}$ ($m_k=0.5 \log k$ in Theorem \ref{strongcons} above) such that $m_k \rightarrow \infty$. This  ensures that $m_k\geq M$ for sufficiently large $k$. 
\begin{remark}[KL approximation-estimation tradeoff] \label{KLNEratessimp}
In Appendix \ref{strongcons-proof}, we state the KL neural estimation error bound for  an arbitrary increasing sequence $\{m_k\}_{k\in\NN}$ (see \eqref{finerrbndklapp}).  
If $M$ (such that $f_{\mathsf{KL}} \in \mathcal{I}(M)$) is known when picking the NN parameters, then for a network of size $k =O\left(n^{(1-\alpha)}\right)$ with $m_k=M$, we have (see Remark \ref{remarkeffratekl} in Appendix~\ref{strongcons-proof})
 \begin{flalign}
 &  \mathbb{E}\left[  \abs{\hat{D}_{\mathcal{G}_{k}^*(M)}(X^n,Y^n)  -\kl{P}{Q}}\right] \notag \\
 &\qquad\qquad \qquad \qquad \quad~ = O\left(k^{-\frac{1}{2}}+\sqrt{k}~ n^{-\frac 12}\right).\label{finbnderrrate-case1} &&
 \end{flalign}
\end{remark}

\begin{remark}[KL effective sample complexity]  \label{rem:KLsampcomp}
The optimal choice of $k$ for \eqref{finbnderrrate-case1} is $k=\sqrt{n}$ (for $\alpha<0.5$). Inserting this into \eqref{finbnderrrate-case1}, we obtain the effective error bound $O\left(n^{-1/4}\right)$.
Although this rate is polynomial in~$n$, it is slower than the parametric $n^{-1/2}$ rate that can be achieved for KL divergence estimation via KDE techniques in the very smooth density regime \citep{kandasamy2015nonparametric}.\footnote{The latter relies on a different technical assumption in terms of H\"{o}lder-smoothness of underlying densities.} 
\end{remark}

\begin{remark}[$L^2$ neural estimation of a function] A reminiscent analysis for the sample complexity of learning a NN approximation of a bounded range function from samples was employed in \cite{Barron-1994}. 
This differs from our setup since SDs are given as a supremum over a function class as opposed to a single function. As such, our results require stronger sup-norm approximation results, as opposed to the $L^2$ bound used in \cite{Barron-1994}. 
\end{remark}
\setcounter{equation}{15}
Theorem  \ref{strongcons} provides conditions on $f_{\mathsf{KL}}$ under which bounds on the effective error of neural estimation can be obtained (namely, that $f_{\mathsf{KL}} \in \mathcal{I}(M)$ for some $M$). 
A primitive condition in terms of the densities of $P$ and $Q$ is given next. 
Let $\mu$ be a measure that dominates both $P$ and $Q$, i.e., $P,Q \ll \mu$, and denote the corresponding densities by $p:=\frac{\dd P}{\dd \mu}$ and $q:=\frac{\dd Q}{\dd \mu}$.

   \begin{figure*}[!t]
\vspace{-3mm}
\setcounter{equation}{27}
\begin{equation}
 \mspace{-10 mu}\mathcal{L}_{\chi^2}(b,c)\mspace{-3 mu}:=\mspace{-3 mu}\left\{\mspace{-3 mu}(P,Q) \mspace{-3 mu}\in\mspace{-3 mu} \mathcal{P}_{\chi^2}(\X): \begin{aligned}&\exists ~f,\bar f \in \cH_{b,c}^{s,\delta}(\Ucal)\mbox{ for some $\delta \geq 0$ and open set }\Ucal \mspace{-3 mu}\supset \X \mspace{-3 mu} \mbox{ s.t. }p\mspace{-3 mu}=\mspace{-3 mu}f|_\X, \\& q^{-1}\mspace{-3 mu}=\mspace{-3 mu}\bar f|_\X\end{aligned}\right\}. \label{classchisqerrbnd}
\end{equation}
    \hrulefill
    \vspace{-3mm}
\end{figure*}
\begin{prop}[KL sufficient condition] \label{prop:distconddirect}
For  $b,c \geq 0$, consider the class $\mathcal{L}_{\mathsf{KL}}(b,c)$ of pairs of distributions defined in \eqref{classklerrbnd} above. Suppose $(P,Q) \in  \mathcal{L}_{\mathsf{KL}}(b,c)$. Then, Part (ii) of Theorem \ref{strongcons} and \eqref{finbnderrrate-case1}  hold with  $M=2\bar c_{b,c,d}$, with  $\bar c_{b,c,d}$ as defined in Corollary~\ref{cor:bndfourcoeff}.

\end{prop}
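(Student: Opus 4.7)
The plan is to reduce the proposition to a direct invocation of Theorem~\ref{strongcons}(ii): it suffices to verify that the KL log-density ratio $f_{\mathsf{KL}} = \log(\dd P/\dd Q)$ lies in $\cI(M)$ with $M = 2\bar c_{b,c,d}$, after which Part~(ii) and the sharpened bound \eqref{finbnderrrate-case1} transfer verbatim. So the whole proof is about verifying membership in $\cI(2\bar c_{b,c,d})$, which means bounding both $\|f_{\mathsf{KL}}\|_\infty$ and $c_B^\star(f_{\mathsf{KL}})$ by $2\bar c_{b,c,d}$.

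\emph{Step 1 (decomposition).} Unpacking $(P,Q) \in \cL_{\mathsf{KL}}(b,c)$ from \eqref{classklerrbnd} yields $f, \bar f \in \cH_{b,c}^{s,\delta}(\Ucal)$ on some open $\Ucal \supset \X$ with $\log p = f|_\X$ and $\log q = \bar f|_\X$. Hence $f_{\mathsf{KL}} = (f - \bar f)|_\X$.

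\emph{Step 2 (sup-norm bound).} Applying the $\abs{\boldsymbol\alpha}=0$ case of the Hölder bound in \eqref{bndedholderclass} to $f$ and $\bar f$ gives $\|f\|_\infty \vee \|\bar f\|_\infty \leq b$, so by the triangle inequality $\|f_{\mathsf{KL}}\|_\infty \leq 2b$.

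\emph{Step 3 (Barron coefficient bound).} The construction underlying Corollary~\ref{cor:bndfourcoeff} produces, for every Hölder function in $\cH_{b,c}^{s,\delta}(\Ucal)$, an extension that lies in $\cB_{\bar c_{b,c,d}}$ (this is exactly what feeds Theorem~\ref{supaapprox} to yield the $O(k^{-1/2})$ rate). Consequently $c_B^\star(\log p) \vee c_B^\star(\log q) \leq \bar c_{b,c,d}$. To pass to the difference, I would invoke subadditivity of $c_B^\star$: if $h_i \in \cB_{c_i}$ is realized by complex Borel measure $\tilde F_i$ with magnitude $F_i$ ($i=1,2$), then $h_1 - h_2$ is represented by $\tilde F_1 - \tilde F_2$, whose magnitude is dominated by $F_1 + F_2$; substitution into \eqref{Cfconstdefval} gives $B(h_1-h_2,\X) \leq B(h_1,\X) + B(h_2,\X)$, and $|h_1(0) - h_2(0)| \leq |h_1(0)| + |h_2(0)|$. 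Combining, $c_B^\star(f_{\mathsf{KL}}) \leq 2\bar c_{b,c,d}$.

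\emph{Step 4 (combine and invoke).} Inspecting the explicit expression for $\bar c_{b,c,d}$ in the proof of Corollary~\ref{cor:bndfourcoeff} confirms $\bar c_{b,c,d} \geq b$, which absorbs Step~2 into $\|f_{\mathsf{KL}}\|_\infty \leq 2\bar c_{b,c,d}$. Together with Step~3, $f_{\mathsf{KL}} \in \cI(2\bar c_{b,c,d})$, and Theorem~\ref{strongcons}(ii) with $M = 2\bar c_{b,c,d}$ closes the argument. The main obstacle, modest as it is, is to read Corollary~\ref{cor:bndfourcoeff} as an \emph{extension} statement rather than just an approximation rate — the Barron representative with coefficient $\bar c_{b,c,d}$ is built implicitly in its proof, and the remaining ingredients (subadditivity of $c_B^\star$ from linearity of the Fourier representation, and the comparison $\bar c_{b,c,d} \geq b$) are routine once that is made explicit.
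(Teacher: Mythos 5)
Your argument is correct and follows essentially the same route as the paper's own proof in Appendix~B.2: both take the explicit Barron-class extensions $f_p^{(\mathsf{e})}, f_q^{(\mathsf{e})} \in \cB_{\bar c_{b,c,d}}$ produced inside the proof of Corollary~\ref{cor:bndfourcoeff} (via \eqref{finextchar}), form $f_{\mathsf{KL}}^{(\mathsf{e})} = f_p^{(\mathsf{e})} - f_q^{(\mathsf{e})}$, bound $B(f_{\mathsf{KL}}^{(\mathsf{e})})$ by linearity of the Fourier transform, bound the sup-norm by the triangle inequality, and close by observing $b \leq b' \leq \bar c_{b,c,d}$. The only cosmetic difference is that you package the Fourier-linearity step as a general subadditivity of $c_B^\star$ rather than applying it directly to the two specific extensions; the substance is identical.
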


\begin{remark}\label{kldivclassdist}[Feasible distributions]
For  appropriately chosen $b,c \mspace{-4 mu}\geq \mspace{-4 mu} 0$, the class $\mathcal{L}_{\mathsf{KL}}(b,c)$  contains distribution pairs $(P,Q) \mspace{-4 mu}\in \mspace{-4 mu}\mathcal{P}_{\mathsf{KL}}(\X)$ whose densities w.r.t. a common dominating measure (e.g., $(P\mspace{-3 mu}+\mspace{-3 mu}Q)/2$) are bounded (from above and below) on $\X$ with a smooth extension on an open set covering $\X$. In particular, this includes uniform distributions, truncated Gaussians, truncated Cauchy distributions, etc. 
\end{remark}
\subsubsection*{$\chi^2$ Divergence}
The $\chi^2$ (chi-squared) divergence between $P,Q\mspace{-3mu}\in\mspace{-3mu}\mathcal{P}(\X)$ ~with~$P\mspace{-3mu}\ll\mspace{-3mu}Q$~is
$\chisq{P}{Q}\mspace{-2mu}=\mspace{-2mu}\mathbb{E}_{Q}\left[\left(\mspace{-2mu}\frac{\dd P}{\dd Q}\mspace{-2mu}-\mspace{-2mu}1\mspace{-1mu}\right)^2\mspace{-1mu}\right]$ (and infinite when $P$ is not absolutely continuous w.r.t. $Q$). It admits the dual~form:
\setcounter{equation}{23}
\begin{equation}
     \chisq{P}{Q}=\sup_{\substack{f:\X\rightarrow \mathbb{R}}} \mathbb{E}_P[f]-\mathbb{E}_Q\left[f+f^2/4\right], \label{chisqdistvarchar}
\end{equation}
where the supremum is over all $f$ such that expectations are finite. This dual form corresponds to \eqref{Mestgen2} with $\gamma(x)=\gamma_{\chi^2}(x):=x+\frac{x^2}{4}$. The supremum in \eqref{chisqdistvarchar} is achieved by $f_{\chi^2}=2\left(\frac{\dd P}{\dd Q}-1\right)$.
 
Let $\hat{\chi}^2_{\mathcal{G}_k(\mathbf{a}_k)}(X^n,Y^n):=\hat{\mathsf{H}}_{\gamma_{\chi^2},\mathcal{G}_k(\mathbf{a}_k)}(X^n,Y^n)$ denote the NE of $\chisq{P}{Q}$. Set $\mathcal{P}_{\chi^2}(\X)$ as the collection of all $(P,Q) \in \mathcal{P}(\X) \times \mathcal{P}(\X)$ such that $P \ll Q$ and $\chisq{P}{Q}<\infty$. The next theorem establishes consistency of the NE and bounds its effective absolute-error.
\begin{theorem}[$\chi^2$ neural estimation]\label{strongconschisq}
Let  
$(P,Q) \in \mathcal{P}_{\chi^2}(\X)$. For any $\alpha>0:$
\begin{enumerate}[label = (\roman*),leftmargin=15 pt]
     \item If $f_{\chi^2} \in \mathsf{C}\left(\X\right)$,
then for $\{k_n\}_{n \in \NN}$, $n$ such that $k_n \rightarrow  \infty$ and  $k_n =O\left(n^{(1-\alpha)/5}\right)$, 
 \begin{align}
 \mspace{-20 mu}  \hat{\chi}^2_{\mathcal{G}_{k_n}(\mathbf{1})}(X^n,Y^n)    \xrightarrow[n\rightarrow \infty]{}  \chisq{P}{Q}, \quad \mathbb{P}-\mbox{a.s.}  \label{finbndasconchisq}
 \end{align}
\item Suppose there exists an $M$ such that $f_{\chi^2}\in \mathcal{I}(M)$ (see \eqref{EQ:I_set}). Then, for $k$ and  $n$ such that $\sqrt{k} \log^2 k=O\left(n^{(1-\alpha)/2}\right)$, we have
  \begin{flalign}
 &  \mathbb{E}\left[  \abs{\hat{\chi}^2_{\mathcal{G}_{k}^*(0.5 \log k)}(X^n,Y^n) -\chisq{P}{Q}}\right] \notag \\
 & \qquad\qquad \qquad = O\left(k^{-\frac{1}{2}}+ \sqrt{k} \log^2k~ n^{-\frac 12}\right). \label{chisqsimprate} &&
 \end{flalign}
   \end{enumerate}
  \end{theorem}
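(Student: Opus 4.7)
The plan is to mirror the proof of Theorem \ref{strongcons} (KL neural estimation), with the key adaptation being the polynomial form $\gamma_{\chi^2}(x) = x + x^2/4$ in place of the exponential $\gamma_{\mathsf{KL}}(x) = e^x - 1$. We write the effective error as
\begin{align*}
\big|\hat{\chi}^2_{\cG_{k}^*(0.5 \log k)}(X^n, Y^n) - \chisq{P}{Q}\big| &\leq \big|\chisq{P}{Q} - \mathsf{H}_{\gamma_{\chi^2}, \cG_{k}^*(0.5 \log k)}(P,Q)\big| \\
&\quad + \big|\hat{\chi}^2_{\cG_{k}^*(0.5 \log k)}(X^n, Y^n) - \mathsf{H}_{\gamma_{\chi^2}, \cG_{k}^*(0.5 \log k)}(P,Q)\big|,
\end{align*}
and bound the approximation term via Theorem \ref{supaapprox} and the estimation term via Theorem \ref{empesterrbnd}, each specialized to $\gamma = \gamma_{\chi^2}$.

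For part $(ii)$, the hypothesis $f_{\chi^2} \in \cI(M)$ yields $c_B^\star(f_{\chi^2}) \vee \|f_{\chi^2}\|_\infty \leq M$. Theorem \ref{supaapprox} then produces some $g \in \cG_k^*(M)$ with $\|f_{\chi^2} - g\|_{\infty, P, Q} = O(k^{-1/2})$, and once $k$ is large enough that $0.5\log k \geq M$ we have $\cG_k^*(M) \subseteq \cG_k^*(0.5\log k)$, so $g$ is feasible. Because $\gamma_{\chi^2}$ is locally Lipschitz with derivative $1 + x/2$ and $\|g\|_\infty \leq \|f_{\chi^2}\|_\infty + O(k^{-1/2}) = O(1)$ on the approximating sequence, a mean value theorem estimate gives $|\EE_Q[\gamma_{\chi^2}(g) - \gamma_{\chi^2}(f_{\chi^2})]| = O(k^{-1/2})$, so that the approximation error is $O(k^{-1/2})$.

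For the estimation error, apply Theorem \ref{empesterrbnd} with $\mathbf{a} = (\sqrt{k}\log k, k^{-1}\log k, 0.5\log k)$. A direct computation using $\phi\in[0,1]$ shows $\|g\|_\infty \leq 1.5\log k$ uniformly on $\cG_k^*(0.5\log k)$, hence $\bar\gamma'_{\cG_k^*(0.5\log k)} = \sup_{g,x}(1+g(x)/2) = O(\log k)$ and $\|\gamma_{\chi^2}\circ g\|_\infty = O(\log^2 k)$. Substituting these into the expressions \eqref{Vkconstdef}-\eqref{Ekconstdef} for $V_{k,\mathbf{a},\gamma}$ and $E_{k,\mathbf{a},n,\gamma}$, and using that the NN parameter space has dimension $O(kd)$ so the entropy integral contributes a $\sqrt{k}$ factor, yields $E_{k,\mathbf{a},\gamma_{\chi^2},n} = O(\sqrt{k}\log^2 k \cdot n^{-1/2})$ and $V_{k,\mathbf{a},\gamma_{\chi^2}} = O(k\log^4 k)$. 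Integrating the subgaussian tail in \eqref{bndesterremp} over $\delta>0$ then bounds the expected estimation error by $O(\sqrt{k}\log^2 k \cdot n^{-1/2})$, and combining with the approximation estimate gives \eqref{chisqsimprate}; the constraint $\sqrt{k}\log^2 k = O(n^{(1-\alpha)/2})$ is precisely what keeps this term non-trivially small.

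Part $(i)$ is handled by the standard consistency argument. Continuity of $f_{\chi^2}$ on the compact set $\X$ and universal approximation by $\cG_{k_n}(\mathbf{1})$ as $k_n \to \infty$ drive the approximation term to zero, after a short step bounding the gap in the variational functional by the sup-norm difference times the local Lipschitz constant of $\gamma_{\chi^2}$. For the estimation term, functions in $\cG_{k_n}(\mathbf{1})$ are bounded in sup-norm by $k_n + 1$, so $\bar\gamma' = O(k_n)$ and $\|\gamma_{\chi^2}\circ g\|_\infty = O(k_n^2)$; substituting into Theorem \ref{empesterrbnd} and applying Borel--Cantelli to the resulting summable tail shows that $k_n = O(n^{(1-\alpha)/5})$ suffices for $\PP$-a.s. convergence. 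The main obstacle is the careful bookkeeping of the constants in Theorem \ref{empesterrbnd} for a polynomial $\gamma$: unlike the KL case where $\gamma' = e^g$ dominates and grows like $k^{3/2}$, here $\gamma_{\chi^2}'$ only grows logarithmically in $k$, and the $\log^2 k$ factor in \eqref{chisqsimprate} arises instead from the range of $\gamma_{\chi^2}(g)$, so the balancing that sets the optimal width and produces the consistency exponent $1/5$ must be redone from scratch rather than inherited from the KL analysis.
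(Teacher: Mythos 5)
Your proposal is correct and follows essentially the same route as the paper's proof: decompose the error into approximation and estimation parts, invoke Theorem~\ref{supaapprox} for the former and Theorem~\ref{empesterrbnd} (specialized to $\gamma_{\chi^2}$) for the latter, set $m_k = 0.5\log k$, and balance $k$ against $n$. The paper packages the estimation-side specialization as a standalone lemma (Lemma~\ref{lem:consicompchisq}) and bounds the approximation term by expanding $|f^2 - g^2| = |f-g|\,|f+g|$ rather than via a mean-value estimate, but these are cosmetic differences. A few small bookkeeping slips in your write-up that do not affect the outcome: the $\log^2 k$ factor arises from the product $a_2\,\bar{\gamma}'_{\chi^2} = O(\log^2 k / k)$ (one $\log k$ from $a_2 = \log k / k$ set by $m_k$, one from $\bar{\gamma}'_{\chi^2}\leq 1+0.75\log k$), not literally from $\|\gamma_{\chi^2}\circ g\|_\infty$; the covering-number metric $\mathsf{d}_{k,\mathbf{a},n,\gamma}$ in the proof of Theorem~\ref{empesterrbnd} depends only on the $k$ output weights $\boldsymbol{\beta}$, so the effective parameter dimension is $k$ rather than $O(kd)$; and $V_{k,\mathbf{a},\gamma_{\chi^2}}=16Ca_2^2 k^2(\bar{\gamma}'+1)^2 = O(\log^4 k)$, without the extra factor of $k$ you wrote, though the resulting Gaussian-tail contribution is dominated by the $CE_{k,\mathbf{a},n,\gamma}$ term either way.
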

The proof of Theorem \ref{strongconschisq} (see Appendix \ref{strongconschisq-proof})  is similar to that of Theorem \ref{strongcons}. 
\begin{remark}[$\chi^2$ effective sample complexity]
In Appendix \ref{strongconschisq-proof}, we obtain general error bounds (see \eqref{finerrbndchisqapp}) assuming an arbitrary increasing sequence $\{m_k\}_{k \in \NN}$, as mentioned in Remark \ref{KLNEratessimp}.  
Given $M$ with $f_{\chi^2} \in \mathcal{I}(M)$, for $m_k=M$ and $k= O\left(n^{(1-\alpha)}\right)$, we have
 \begin{flalign}
     & \mathbb{E}\left[  \abs{\hat{\chi}^2_{\mathcal{G}_{k}^*(M)}(X^n,Y^n) -\chisq{P}{Q}}\right]  = O\mspace{-4mu}\left(\mspace{-2mu}k^{-\frac 12}\mspace{-4mu}+\mspace{-4mu}\sqrt{k}~ n^{-\frac 12}\mspace{-3mu}\right). \label{finbnderrratechisq-case1} &&
 \end{flalign}
 Comparing \eqref{finbndasconchisq}-\eqref{finbnderrratechisq-case1} to \eqref{finbndascon}-\eqref{finbnderrrate-case1}, we see that consistency holds under milder conditions and that the effective error bound is slightly better for $\chi^2$ divergence than for KL divergence. As in Remark \ref{rem:KLsampcomp}, the optimal choice of $k$ in \eqref{finbnderrratechisq-case1}  is $k=\sqrt{n}$ (for $\alpha<0.5$). This results in an effective error bound of $O(n^{-1/4})$. 
\end{remark}

The next result is the counterpart of Proposition \ref{prop:distconddirect} to $\chi^2$ divergence (see Appendix \ref{prop:distconddirect-chisq-proof} for proof). 
\begin{prop}[$\chi^2$ sufficient condition] \label{prop:distconddirect-chisq}
 For  $b,c \geq 0$, consider the class $\mathcal{L}_{\chi^2}(b,c)$ of pairs of distributions defined in \eqref{classchisqerrbnd} above, and suppose that $(P,Q) \in  \mathcal{L}_{\chi^2}(b,c)$. 
 Then, Part (ii) of Theorem \ref{strongconschisq} and \eqref{finbnderrratechisq-case1} hold with  $M=(2+\bar  c^2_{b,c,d}2^{\lfloor d/2\rfloor+3})(\kappa_d \sqrt{d}\vee 1) $, where $\kappa_d$ and $\bar c_{b,c,d}$ are given in \eqref{constkappa} and \eqref{constapproxhold}, respectively.
\end{prop}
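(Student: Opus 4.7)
The plan is to parallel Proposition \ref{prop:distconddirect} (the KL case), replacing the subtractive structure $\log p - \log q$ by the product structure $p \cdot q^{-1}$ that characterizes the $\chi^2$ optimizer. The maximizer in \eqref{chisqdistvarchar} is $f_{\chi^2} = 2(p/q - 1)$; since $(P,Q) \in \mathcal{L}_{\chi^2}(b,c)$, there exist Hölder extensions $f, \bar f \in \cH_{b,c}^{s,\delta}(\Ucal)$ with $s = \lfloor d/2 \rfloor + 2$ such that $p = f|_\X$ and $q^{-1} = \bar f|_\X$. Hence $f_{\chi^2}$ admits the extension $\tilde h := 2(f \bar f - 1)$ on $\Ucal$. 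The strategy is (i) show $\tilde h$ lies in a larger Hölder class with explicit constants, (ii) apply Corollary \ref{cor:bndfourcoeff} to convert this into a Barron-coefficient bound, (iii) verify the sup-norm bound, and (iv) reconcile the resulting constants with the claimed $M$.

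For step (i), I would use the Leibniz rule: for any multi-index $\alpha$ with $|\alpha| \leq s$,
\begin{equation*}
  D^\alpha(f \bar f)(x) = \sum_{\beta \leq \alpha} \binom{\alpha}{\beta} D^\beta f(x)\, D^{\alpha - \beta} \bar f(x),
\end{equation*}
so $|D^\alpha(f \bar f)(x)| \leq 2^{|\alpha|} b^2 \leq 2^s b^2$. For the Hölder seminorm at order $|\alpha| = s$, I would expand $D^\alpha(f\bar f)(x) - D^\alpha(f \bar f)(x')$ term-by-term via Leibniz, and within each summand add/subtract a mixed intermediate factor so that one factor contributes the Hölder bound $c\|x - x'\|^\delta$ (either on $D^\beta f$ or on $D^{\alpha - \beta}\bar f$) and the other contributes its uniform bound $b$. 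Summing the $2^s$ Leibniz terms yields $f \bar f \in \cH_{b',c'}^{s,\delta}(\Ucal)$ with $b' = 2^s b^2$ and $c' = O(2^s b c)$. Subtracting the constant $1$ and scaling by $2$ only modifies the zeroth-order bound, so $\tilde h$ belongs to a Hölder class with parameters of the same order.

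For step (ii), I would apply Corollary \ref{cor:bndfourcoeff} to $\tilde h$, obtaining a Barron-class extension with coefficient at most $\bar c_{b',c',d}$ as given in \eqref{constapproxhold}. Unpacking \eqref{constapproxhold}, the $(\kappa_d \sqrt d \vee 1)$ factor appears linearly and the Hölder-class bound enters multiplicatively; substituting $b' = 2^s b^2$ together with the relation $2^{s+1} = 2^{\lfloor d/2 \rfloor + 3}$ produces the term $\bar c^2_{b,c,d} \cdot 2^{\lfloor d/2 \rfloor + 3}(\kappa_d \sqrt d \vee 1)$ in $M$, while the additive $2(\kappa_d \sqrt d \vee 1)$ absorbs the constant $-1$ in $\tilde h$. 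Step (iii) is immediate: on $\X$, $|f_{\chi^2}| \leq 2(b^2 + 1)$, dominated by the same $M$. Combining gives $c_B^\star(f_{\chi^2}) \vee \|f_{\chi^2}\|_\infty \leq M$, so $f_{\chi^2} \in \mathcal{I}(M)$ and Theorem \ref{strongconschisq}(ii) together with \eqref{finbnderrratechisq-case1} apply.

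I expect the main obstacle to be the precise bookkeeping of constants in the Leibniz expansion of the Hölder seminorm for $D^\alpha(f \bar f)$ at order $|\alpha| = s$: unlike the KL case, cross-terms must be balanced between the uniform bound $b$ of one factor and the Hölder seminorm $c$ of the other, and the combinatorics of $2^s$ multi-indices has to be tracked exactly in order for the exponent $\lfloor d/2 \rfloor + 3$ in $M$ to emerge cleanly. A secondary subtlety is verifying that $\bar c_{b',c',d}$ from \eqref{constapproxhold} is linear in its Hölder-bound argument, which is what allows the substitution $b' = 2^s b^2$ to produce the quadratic dependence $\bar c^2_{b,c,d}$ in the final constant without additional cross-terms.
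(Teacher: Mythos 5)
Your route — run a Leibniz analysis of $f\bar f$ in the Hölder class on $\Ucal$, then feed the result back into Corollary~\ref{cor:bndfourcoeff} — is genuinely different from what the paper does, and it will not produce the claimed $M$. The paper never re-enters the Hölder machinery. Instead, it exploits the fact that Corollary~\ref{cor:bndfourcoeff}'s proof already produced extensions $f_p^{(\mathsf{e})},f_q^{(\mathsf{e})}\in\cB_{b'\vee c'}\cap\tilde{\cS}_{s,b'}(\RR^d)$ of $f,\bar f$ (see \eqref{finextchar}), with \emph{both} uniform and $L^2$ bounds on all derivatives up to order $s$. It then defines $f_{\chi^2}^{(\mathsf{e})}:=2(f_p^{(\mathsf{e})}\cdot f_q^{(\mathsf{e})}-1)$ directly on $\RR^d$, applies Leibniz only to bound $\|D^{\boldsymbol{\alpha}_j}f_{\chi^2}^{(\mathsf{e})}\|_{L^2(\RR^d)}\leq 2+2^{j+1}b'^2$ (pairing the $L^\infty$ bound on one factor with the $L^2$ bound on the other), concludes $f_{\chi^2}^{(\mathsf{e})}\in\tilde{\cS}_{s,2+2^{s+1}b'^2}(\RR^d)$, and invokes Lemma~\ref{propsuffcond} — not Corollary~\ref{cor:bndfourcoeff} — to get the Barron coefficient. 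This is why the Hölder seminorm of the product never needs to be tracked, and why the obstacle you flag as "the main obstacle" simply does not arise on the paper's path.

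The concrete gap in your plan is the constant reconciliation in step (iv). The map $(b,c)\mapsto\bar c_{b,c,d}$ in \eqref{constapproxhold} is (jointly) \emph{linear} in $(b,c)$. Plugging the rescaled Hölder parameters $b'\asymp 2^s b^2$, $c'\asymp 2^s bc$ into this linear map yields something of order $2^s b(b+c)$ times a dimensional constant — i.e., linear in the scaled arguments — not $\bar c_{b,c,d}^2\,2^{s+1}$. The square $\bar c_{b,c,d}^2$ in the stated $M$ comes from $b'^2$ (with $b'\leq\bar c_{b,c,d}$), which appears because the Leibniz bound is applied to a \emph{product of two extensions each with derivative $L^2$-norm $\leq b'$}; there is no natural way to get that square out of a single pass through Corollary~\ref{cor:bndfourcoeff} applied to a product Hölder class. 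Your secondary "subtlety" ("linearity of $\bar c$ $\ldots$ produces the quadratic dependence $\bar c_{b,c,d}^2$") is therefore based on an incorrect expectation: linearity produces exactly the wrong scaling for this to close. Additionally, your Hölder-seminorm telescoping for $D^{\boldsymbol\alpha}(f\bar f)$ with $|\boldsymbol\alpha|=s$ mixes Lipschitz (from the mean-value theorem on lower-order factors) with Hölder-$\delta$ (from the top-order factor); on a bounded $\Ucal$ this is repairable but it is yet more bookkeeping that the paper simply avoids. To salvage your plan you would need to either (a) switch to the paper's path through $\tilde{\cS}_{s,\cdot}(\RR^d)$ and Lemma~\ref{propsuffcond}, or (b) accept that Corollary~\ref{cor:bndfourcoeff} on the product gives a valid but \emph{different} constant and re-derive the proposition with that $M$.
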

\begin{remark}[Feasible distributions]The class $\mathcal{L}_{\chi^2}(b,c)$, for appropriately chosen $b,c \geq 0$, contains all $(P,Q) \in \mathcal{P}_{\chi^2}(\X)$,  whose densities $p,q,$ w.r.t. a common dominating measure are bounded (upper bounded for $p$ and bounded away from zero for $q$) on $\X$ with an extension that is sufficiently smooth on an open set covering $\X$. This includes the distributions mentioned in Remark~\ref{kldivclassdist}.
\end{remark}

\setcounter{equation}{28}
\subsubsection*{Squared Hellinger distance}
The squared Hellinger distance between~$P\mspace{-3mu},\mspace{-1.5mu}Q\mspace{-3mu}\in\mspace{-2mu}\mathcal{P}(\X\mspace{-1mu})$  with $P \ll Q$ is $ H^2(P,Q):=\mathbb{E}_Q\bigg[\Big(\sqrt{\frac{\dd P}{\dd Q}}-1\Big)^2\bigg]$, and 
\begin{equation}
 H^2(P,Q)=\mspace{-7mu}\sup_{\substack{f:\X \rightarrow \mathbb{R},\\ f(x)< 1,\forall x \in \X}}\mspace{-7mu} \mathbb{E}_P[f]\mspace{-2mu}-\mspace{-2mu}\mathbb{E}_Q\big[f/(1-f)\big], \label{helvarchar}
\end{equation}
is its dual form, where the supremum is over all functions such that the expectations are finite (\eqref{helvarchar} corresponds to~\eqref{Mestgen2} with $\gamma(x)=\gamma_{H^2}(x):=\frac{x}{1-x}\Big)$. The supremum in~\eqref{helvarchar} is achieved by $f_{H^2}=1-\left(\frac{\dd P}{\dd Q}\right)^{-\frac 12}$. 
\begin{figure*}[!t]
\hspace{-0.5mm}
\centering
 \begin{subfigure}[t]{0.165\textwidth}
 \centering
  \includegraphics[width=1.2\linewidth]{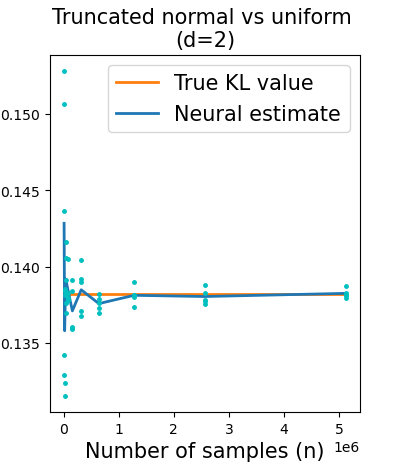}
   \vspace{-4.5mm}
   \caption{} \label{KL-2d-estvsnumsamp}
   \vspace{-1mm}
 \end{subfigure}
 \ \hspace{0.1mm}
 \begin{subfigure}[t]{0.165\textwidth}
 \centering
 \includegraphics[width=1.2\linewidth]{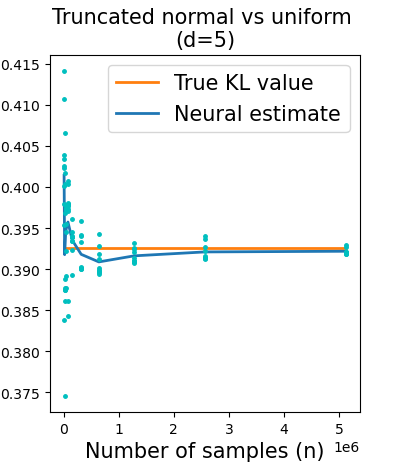}
    \vspace{-4.5mm}
   \caption{}\label{fig:5DGaussUniform_a}
   \vspace{-1mm}
 \end{subfigure}
 \ \hspace{-0.5mm}
 \begin{subfigure}[t]{0.165\textwidth}
 \centering
  \includegraphics[width=1.2\linewidth]{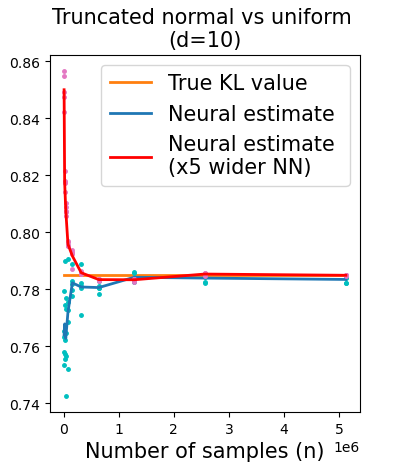}
   \vspace{-4.5mm}
   \caption{} \label{KL-10d-estvsnumsamp}
   \vspace{-1mm}
 \end{subfigure}
  \ \hspace{-1mm}
 \begin{subfigure}[t]{0.165\textwidth}
 \centering
  \includegraphics[width=1.2\linewidth]{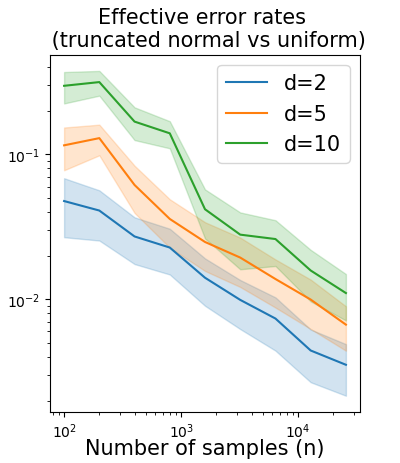}
   \vspace{-4.5mm}
   \caption{} \label{error}
   \vspace{-1mm}
 \end{subfigure}
 \ \hspace{-0.8mm}
 \begin{subfigure}[t]{0.265\textwidth}
 \centering
 \includegraphics[width=1.1\linewidth]{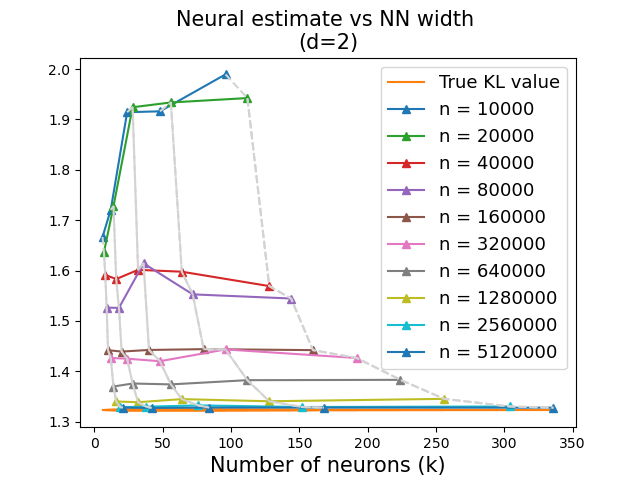}
 \vspace{-4.5mm}
 \caption{}\label{KL-2d-estvsk}
 \vspace{-1mm}
 \end{subfigure}
  \caption{Neural estimate of KL divergence: (a) estimate versus $n$ convergence in dimension $d=2$, for  $P$ given by $\cN(\mathbf{0},\mathrm{I}_2)$ 
  truncated to be supported inside $\X=[0.1,2]\times[-1,0]$ and $Q=\mathsf{Unif}(\X)$; (b) estimate versus $n$ convergence in dimension $d=5$, for $P$  given by $\cN(\mathbf{0},\mathrm{I}_5)$ truncated to $\X=[0.1,2]\times[-1,0]\times[2,3]\times[-2,-1.5]\times[-1,1]$ and $Q=\mathsf{Unif}(\X)$; (c) similar to (b) but in dimension $d=10$ and with compact support $\X\times\X$; (d) effective error rates versus number of samples; (e) estimate versus number of neurons $k$, for fixed number of samples $n$.}
  \label{fig:2DGauss}
\vspace{-2mm}
\end{figure*}
Let $
\hat{H}^2_{\tilde {\mathcal{G}}_k(\mathbf{a}_k,t)}(X^n,Y^n):=\hat{\mathsf{H}}_{\gamma_{ H^2},\tilde{\mathcal{G}}_k(\mathbf{a}_k,t)}(X^n,Y^n)$,
where $t>0$ and $\tilde{\mathcal{G}}_{k}(\mathbf{a},t)$ is the NN class
  \begin{equation}
  \mspace{-10 mu} \tilde{\mathcal{G}}_{k}\mspace{-3 mu}\left(\mathbf{a},t\right)
  \mspace{-3 mu}:=\mspace{-3 mu}\left\{\mspace{-3 mu}g:\mathbb{R}^d \mspace{-3 mu}\rightarrow \mathbb{R}: \begin{aligned}& g(x)\mspace{-3mu}=\mspace{-3mu}(1-t) \wedge \tilde g(x), \\&\tilde g \in \mathcal{G}_k(\mathbf{a})\end{aligned}\right\}.  \label{truncNNf}
  \end{equation}
Set
\begin{equation}
\mathcal{I}_{H^2}(m):= \mspace{-3 mu}\left\{  f:\X \rightarrow \RR, \begin{aligned}& c_B^\star(f) \vee  \mspace{-3 mu}\norm{(1-f)^{-1}}_{\infty}\mspace{-5 mu} \\&\vee \norm{f}_{\infty} \mspace{-5 mu}\leq m  \end{aligned}\right\}, \notag
\end{equation}
and $\mathcal{P}_{H^2}(\X)$ as the collection of all $(P,Q) \in \mathcal{P}(\X) \times \mathcal{P}(\X)$ such that $P \ll Q$ (note that $0 \leq H^2(P,Q) \leq 2$). Define the shorthands $\tilde{\cG}_{k,t}^{(1)}=\tilde{\mathcal{G}}_{k}(\mathbf{1},t)$, \begin{equation} \tilde{\cG}_{k,m,t}^{(2)}:=\tilde{\cG}_k\left( \sqrt{k}\log k, 2k^{-1}m,m,t\right). \notag
\end{equation} The next theorem establishes consistency of the NE and bounds its effective absolute-error (see Appendix \ref{strongconshel-proof} for proof).
\begin{theorem}[$H^2$ neural estimation]\label{strongconshel}
Let  
$(P,Q) \in \mathcal{P}_{H^2}(\X)$. For any $\alpha>0:$
\setcounter{equation}{30}
\begin{enumerate}[label = (\roman*),leftmargin=15 pt]
     \item If $f_{H^2} \in \mathsf{C}\left(\X\right)$, 
then, for $\{k_n,t_{k_n}\}_{n \in \NN}$, such that $ k_n \rightarrow \infty$,  $t_{k_n}>0, ~t_{k_n} \rightarrow 0$, and $k_n^{\frac 32}t_{k_n}^{-2} =O\left(n^{(1-\alpha)/2}\right)$,
\begin{align}
  \mspace{-10mu} \hat{H}^2_{\tilde{\mathcal{G}}_{k_n,t_{k_n}}^{(1)}}\mspace{-4mu}(X^n,Y^n)    \xrightarrow[n\rightarrow \infty]{}  H^2(P,Q), \quad \mathbb{P}-\mbox{a.s.}  \label{finbndasconhel}
\end{align}
\item Suppose there exists $M$ such that $f_{H^2} \in \mathcal{I}_{H^2}(M)$. Then, for $k,n$
with
$\log^3 k \sqrt{k}=O\left(n^{(1-\alpha)/2}\right)$, $m_k=0.5 \log k$ and $t_k=\log^{-1} k$, we have
  \begin{flalign}
 &  \mathbb{E}\left[  \abs{\hat{H}^2_{\tilde{\mathcal{G}}_{k,m_k,t_k}^{(2)}}(X^n,Y^n) -H^2(P,Q)}\right] \notag \\
 &\qquad \quad= \left(\log k~ k^{-\frac{1}{2}}\mspace{-2.5mu}\right)+O\left(\log^3 k \sqrt{k}~n^{\mspace{-2mu}-\frac 12}\mspace{-2mu}\right). \label{helsimperrbnd}&&
 \end{flalign} 
   \end{enumerate}
  \end{theorem}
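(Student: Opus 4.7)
I would follow the now-standard decomposition
\[
|\hat{H}^2_{\tilde{\mathcal{G}}_{k,m_k,t_k}^{(2)}}(X^n,Y^n) - H^2(P,Q)| \leq \underbrace{|H^2_{\tilde{\mathcal{G}}_{k,m_k,t_k}^{(2)}}(P,Q) - H^2(P,Q)|}_{\text{approximation}} + \underbrace{|\hat{H}^2_{\tilde{\mathcal{G}}_{k,m_k,t_k}^{(2)}}(X^n,Y^n) - H^2_{\tilde{\mathcal{G}}_{k,m_k,t_k}^{(2)}}(P,Q)|}_{\text{estimation}}
\]
used in the proofs of Theorems~\ref{strongcons} and \ref{strongconschisq}, but with the extra book-keeping required to handle the pole of the measurement function $\gamma_{H^2}(x)=x/(1-x)$ at $x=1$. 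This is exactly why the class $\tilde{\mathcal{G}}_k(\mathbf{a},t)$ is clipped from above at $1-t$; the proof then hinges on picking $t_k\to 0$ slowly enough to preserve approximation quality, yet fast enough that the blow-up of $\gamma_{H^2}$ on $(-\infty,1-t_k]$ stays polylogarithmic in $k$.

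For the approximation error under Part~(ii), the hypothesis $f_{H^2}\in \mathcal{I}_{H^2}(M)$ unpacks to $\|f_{H^2}\|_\infty\le M$ together with the crucial \emph{margin} $1-f_{H^2}\ge 1/M$, i.e.\ $f_{H^2}\le 1-1/M$. With $m_k=0.5\log k$ and $t_k=1/\log k$, eventually $m_k\ge M$ and $t_k\le 1/(2M)$. Invoking Theorem~\ref{supaapprox} on $\mathcal{G}_k^*(m_k)$ produces $g$ satisfying $\|f_{H^2}-g\|_{\infty,P,Q}=O(\log k\cdot k^{-1/2})$; the margin makes the clipping $\tilde g=(1-t_k)\wedge g$ inactive on the support, so $\tilde g\in \tilde{\mathcal{G}}_{k,m_k,t_k}^{(2)}$ inherits the same sup-norm bound. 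Since both $f_{H^2}$ and $\tilde g$ sit at distance $\ge 1/(2M)$ from $1$, $\gamma_{H^2}$ is $O(M^2)$-Lipschitz on their joint range, so the sup-norm approximation of the optimizer $f_{H^2}$ transfers directly to $|H^2_{\tilde{\mathcal{G}}_{k,m_k,t_k}^{(2)}}(P,Q)-H^2(P,Q)|=O(\log k\cdot k^{-1/2})$, matching the first summand of \eqref{helsimperrbnd}.

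For the estimation error I would apply Theorem~\ref{empesterrbnd} to $\tilde{\mathcal{G}}_{k,m_k,t_k}^{(2)}$ with $\gamma=\gamma_{H^2}$. The clipping makes $\mathsf{H}_{\gamma_{H^2},\tilde{\mathcal{G}}}(P,Q)$ finite, and the new quantity to control is
\[
\bar{\gamma}'_{\tilde{\mathcal{G}}_{k,m_k,t_k}^{(2)}} \;=\; \sup_{g,x}\frac{1}{(1-g(x))^2}\;\le\;\frac{1}{t_k^2}\;=\;\log^2 k.
\]
Together with the output-range bound $|\tilde g|\le 3m_k = O(\log k)$ and the $O(\sqrt{k}\log k)$ entropy integral for shallow sigmoid classes (mirroring the $\chi^2$ analysis), this plugs into $V_{k,\mathbf{a},\gamma}$ and $E_{k,\mathbf{a},n,\gamma}$ of Theorem~\ref{empesterrbnd} and yields the $O(\log^3 k\cdot \sqrt{k}\cdot n^{-1/2})$ high-probability bound; integrating the subgaussian tail over $\delta\ge 0$ converts it to the expectation bound in \eqref{helsimperrbnd}. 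Part~(i) follows by the same decomposition: the universal approximation of $\mathcal{G}_{k_n}(\mathbf{1})$ on $\mathsf{C}(\X)$ (as used in Theorem~\ref{strongcons}(i)), combined with $t_{k_n}\to 0$ and the Lipschitzness of $\gamma_{H^2}$ on $(-\infty,1-t_{k_n}]$, drives the approximation error to $0$, while the scaling $k_n^{3/2}t_{k_n}^{-2}=O(n^{(1-\alpha)/2})$ renders the Theorem~\ref{empesterrbnd} subgaussian tail summable in $n$, so a Borel--Cantelli argument gives a.s.\ convergence.

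The main obstacle is the joint calibration of $(k_n,t_{k_n})$: the truncation $t_k$ must be small enough that the clipping stays inactive near the ideal optimizer $f_{H^2}$, yet large enough that the $1/t_k^2$ blow-up does not overwhelm the $\sqrt{k}$ entropy contribution fed into Theorem~\ref{thm:tailineq}. The choice $t_k=1/\log k$ is the cleanest ``sweet spot'' keeping everything polylogarithmic, but rigorously justifying it requires tracking $t_k$ through both the diameter term and the entropy integral in Theorem~\ref{thm:tailineq}, and verifying that the margin $f_{H^2}\le 1-1/M$ fully decouples the approximation bound from the precise scaling of $t_k$.
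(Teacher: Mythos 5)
Your high-level plan — decomposing into approximation and estimation, invoking Theorem~\ref{supaapprox} for the first piece and Theorem~\ref{empesterrbnd} for the second, and scaling $(m_k,t_k)$ with $k$ to trade off expressivity against the $\gamma_{H^2}$ singularity — matches the paper exactly, and your treatment of the estimation error (bounding $\bar\gamma'_{\tilde{\mathcal{G}}}\le t_k^{-2}=\log^2 k$, plugging into $E_{k,\mathbf{a},n,\gamma}$ with $a_{2,k}=2m_k/k$ to get $O(\sqrt{k}\log^3 k\, n^{-1/2})$, and tail-integrating / Borel--Cantelli for Part~(i)) is correct.

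However, there is a concrete error in your approximation-error chain. You claim that Theorem~\ref{supaapprox} applied to $\mathcal{G}_k^*(m_k)$ yields $\|f_{H^2}-g\|_{\infty,P,Q}=O(\log k\cdot k^{-1/2})$. This is not what the theorem gives. By \eqref{finaapperrbarcls}, the constant $\tilde C_{d,c}$ multiplying $k^{-1/2}$ depends only on $d$ and the Barron constant $c=c_B^\star(f_{H^2})\le M$ of the target, \emph{not} on the network parameter bound $m_k$. Enlarging the class from $\mathcal{G}_k^*(M)$ to $\mathcal{G}_k^*(m_k)$ cannot hurt, so once $m_k\ge M$ the sup-norm bound is $\tilde C_{d,M}\,k^{-1/2}=O(k^{-1/2})$ — no $\log k$. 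Your inactive-clipping argument then gives $(1-\tilde g)^{-1}=O(M)$, hence a $\gamma_{H^2}$-Lipschitz constant $O(M^2)$, and the two together would yield an approximation error of $O(k^{-1/2})$, strictly sharper than the $O(\log k\cdot k^{-1/2})$ you claim to have matched. You only arrive at \eqref{helsimperrbnd}'s first summand because the spurious $\log k$ from the mis-read theorem cancels the missing one.

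The paper takes a weaker but more robust route through this step: rather than exploit inactivity of the clip, it bounds $(1-g_{\theta_k^*})^{-1}\le t_k^{-1}$ directly from the definition of $\tilde{\mathcal{G}}_k(\mathbf{a},t)$, so the effective Lipschitz factor is $M\,t_k^{-1}=M\log k$, which, combined with the correct $O(k^{-1/2})$ sup-norm bound, produces $O(\log k\cdot k^{-1/2})$ (see \eqref{sqrtbndhel}--\eqref{approxerrhelcase2}). So the $\log k$ in the theorem's first term comes from the truncation level $t_k$, not from the approximation theorem. Relatedly, in Part~(i) your statement that ``the Lipschitzness of $\gamma_{H^2}$ on $(-\infty,1-t_{k_n}]$ drives the approximation error to $0$'' is imprecise: the Lipschitz constant on that interval is $t_{k_n}^{-2}\to\infty$. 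You need the stronger observation — which you already have — that $f_{H^2}$ and its approximant both live in a fixed strip $\le 1-\text{margin}$ for some $k$-independent margin, making $\gamma_{H^2}$ uniformly Lipschitz there once $k$ is large.
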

To establish effective error bounds for squared Hellinger distance, we used a truncated NN class $\tilde{\mathcal{G}}_{k}\mspace{-3 mu}\left(\mathbf{a},t\right)$ given in \eqref{truncNNf}, which is the function class obtained by saturating the shallow NN output to $1-t$ for some $t>0$. This is done since $\gamma_{H^2}(x)$ has a singularity at $x=1$ and the NN outputs must be truncated below 1 so as to satisfy \eqref{maxdergamma} for bounding the empirical estimation error. For obtaining  effective error bounds under this constraint, we scale the parameter $t$ with $k$ as $\{t_k\}_{k \in \NN}$ for some decreasing positive sequence $t_k \rightarrow 0$. The bound in \eqref{helsimperrbnd} uses $t_k=\log^{-1} k$.
  \begin{remark}[Effective sample complexity] 
  In Appendix \ref{strongconshel-proof}, we obtain effective error bounds (see \eqref{finbndratehelcase2}) for an arbitrary decreasing positive sequence $\{t_k\}_{k \in \NN}$, with $t_k \rightarrow 0$, and an increasing positive divergent sequence $\{m_k\}_{k \in \NN}$.
  If $f_{H^2} \in \mathcal{I}_{H^2}(M)$ and the NN parameters can depend on $M$, then, for  $k$, $t_k=\log^{-1} k$ and $n$ such that  $\sqrt{k}\log^2 k= O\left(n^{(1-\alpha)/2}\right)$, setting $m_k=M$ in \eqref{finbndratehelcase2} yields
 \begin{flalign}
     & \mathbb{E}\left[  \abs{\hat{H}^2_{\tilde{\mathcal{G}}_{k,M,t_k}^{(2)}}(X^n,Y^n) -H^2(P,Q)}\right] \notag \\
     & \qquad \quad\  = O\mspace{-4mu}\left(\mspace{-2mu} k^{-\frac{1}{2}}\log k\mspace{-2.5mu}\right)+\mspace{-3mu}O\mspace{-3mu}\left(\sqrt{k}~\log^2 k n^{-\frac 12}\mspace{-3mu}\right). \label{finbnderrratechel-case1} &&
 \end{flalign}
The optimal choice of $k$ in \eqref{finbnderrratechel-case1} is $k=n^{\frac{1}{2(1+\eta)}}$,  $\big($for $\alpha <0.5(1-2\eta)(1+\eta)^{-1}\big)$,
where $\eta>0$ is an arbitrarily small. The resulting effective error bound is  
$O(n^{-1/4})$.
\end{remark}
\section{Empirical Results}
We illustrate the performance of KL divergence neural estimation via some simple simulations. 
The considered NN class is $\mathcal{G}_{k}^*(M)$ (see Section \ref{SUBSEC:KL_NE}) with  $M$ appropriately chosen. 
The number of samples $n$ varies from $n=10^5$ to $n=6.4 \times 10^6$, and we scale the NN size as $k=n^{1/5}$ (in accordance with $k=O\left(n^{1-\alpha}\right)$ for $\alpha>0$ sufficiently small, see \eqref{finbnderrrate-case1}). The NN is trained using Adam optimizer \citep{kingma2017adam} for 200 epochs. The initial learning rate of $10^{-2}$ is reduced to $10^{-3}$ after the first 100 epochs. We use batch size $n\times 10^{-3}$, and present plots averaged over 10 different runs (shown as dots in Figures \ref{KL-2d-estvsnumsamp}-\ref{KL-10d-estvsnumsamp}).

Figure \ref{KL-2d-estvsnumsamp} shows convergence of the NE of $\kl{P}{Q}$ versus number of samples, when $P$ is a  2-dimensional truncated Gaussians (adhering to the compact support assumption) and $Q$ is uniform distribution on the same support. For Figure \ref{KL-2d-estvsnumsamp}, we start from $\tilde{P}=\cN(\mathbf{0},\mathrm{I}_2)$, where $\mathrm{I}_d\in\RR^{d\times d}$~is the identity, and truncate (and normalize) it to $\X=[0.1,2]\times[-1,0]$ to obtain $P$, and set $Q=\mathsf{Unif}(\X)$. 
Figure \ref{fig:5DGaussUniform_a} repeats the experiment but with $P$ as a 5-dimensional Gaussian $\cN(\mathbf{0},\mathrm{I}_5)$ truncated to $\X:=[0.1,2]\times[-1,0]\times[2,3]\times[-2,-1.5]\times[-1,1]$ and $Q=\mathsf{Unif}(\X)$.  
The same setup but in dimension $d=10$ and with $\X\times\X$ (instead of $\X$) is presented in Figure \ref{KL-10d-estvsnumsamp} (blue curve). Corresponding error rates (for effective error averaged over 100 runs)  versus number of samples (on a log-log scale) are shown in Figure \ref{error}. It can be seen therein that the convergence rate is parametric for large enough values of  $n$.

While convergence is evident in all dimensions, the trajectories are different: convergence happens from above when $d$ is small and from below for large $d$ (with $d=5$ sitting in between and presenting a mixed trend). This happens because the same NN size $k=n^{1/5}$ were used in all three experiments, without factoring in the dimension (generally, higher-dimensional distribution need a larger NN). This results in the NN being relatively large when $d=2$, which causes overfitting and, in turn, overestimation of the KL divergence for small $n$ values. For $d=10$, that same NN is relatively small, resulting in underestimation for small $n$. In accordance with the above, the $d=5$ case exhibits a mixed trend. To verify this effect, we increased the NN size by a factor of 5 in the $d=10$ experiment---the obtained neural estimator is shown by the red curve in Figure \ref{KL-10d-estvsnumsamp}. As expected, the larger networks results in convergence from above, similarly to the original $d=2$ example.

To further examine the overfitting effect, Figure \ref{KL-2d-estvsk} revisits the setup of Figure \ref{KL-2d-estvsnumsamp} and shows the evolution of the neural estimate as the number of neurons $k$ grows, keeping  $n$ fixed. The neural estimate progressively gets closer to the true value as $n$ becomes larger. The overestimation of the KL divergence again highlights overfitting. The increase in the KL divergence estimate with $k$ could also be due to higher optimization error for larger $k$ values. The light gray lines across curves for different $n$ values show convergence to true $\kl{P}{Q}$. 
\section{Concluding Remarks}
This paper studied neural estimation of SDs, aiming to characterize tradeoffs between approximation and empirical estimation errors. We showed that~NEs of $\mathsf{f}$-divergences, such as the KL and $\chi^2$ divergences and the squared Hellinger distance, are consistent, provided the appropriate scaling of the NN size $k$ with the sample size $n$. We then derived non-asymptotic absolute-error upper bounds that quantify the desired tradeoff between $k$ and $n$. 
\revised{The key technical results leading to these bounds are  Theorems \ref{supaapprox}-\ref{empesterrbnd}, which, respectively, bound the sup-norm approximation error by NNs and the empirical estimation error of the parametrized~SD. }

Going forward, we aim to extend our results to additional SDs such as the total variation distance, the $1$-Wasserstein distance, etc. While the high level analysis extends to these examples, new approximation bounds for the appropriate function classes (bounded or 1-Lipschitz) are needed. Another extension of interest is to $P$ and $Q$ that are not compactly supported. This is possible within our framework under proper tail decay, but we leave the details for future work. While we have neglected the optimization error from our current analysis, this is an important component of the overall estimation error and we plan to examine it in the future. 
Lastly, generalizing our analysis to NEs based on deep neural networks is another important extension. 
Through the results herein and the said future directions, we hope to provide useful performance guarantees for NEs that would facilitate a principled usage thereof in ML applications and beyond.

\bibliography{SS_ref,ZG_ref_new}


\onecolumn
\appendix

\section{Appendix}
To emphasize the underlying parameters of the NN, by some abuse of notation, we introduce 
\begin{subequations}
\begin{equation}
  \mathcal{G}_k(\Theta) :=\mspace{-3mu}\left\{ g:\mathbb{R}^d\mspace{-5mu}\rightarrow \mathbb{R}:\begin{aligned}
    &g(x)=\sum_{i=1}^k \beta_i \phi\left(w_i\cdot x+b_i\right) +b_0,~\mspace{-3mu} \big(\{\beta_i,w_i,b_i\}_{i=1}^k,b_0\big)\in\Theta \end{aligned}\mspace{-10mu}\right\},
\end{equation}
\begin{equation}
  \Theta_{k}(\mathbf{a}):=\left\{\left(\{\beta_i,w_i,b_i\}_{i=1}^k,b_0\right):\ \ \begin{aligned}&  w_i \in \mathbb{R}^d,~ b_0,b_i,\beta_i \in \mathbb{R},~\max\nolimits_{\substack{i=1,\ldots,k\\j=1,\ldots,d}}\left\{|w_{i,j}|,|b_i|\right\} \leq a_1\\&
  |\beta_i| \leq a_2,~ ~i=1,\ldots,k,~|b_0| \leq a_3  \end{aligned} 
 \right\}. \label{paramspace}
\end{equation}%
\end{subequations}
Also, throughout the Appendix, we denote $g(x)=\sum_{i=1}^k \beta_i \phi\left(w_i\mspace{-3mu}\cdot\mspace{-3mu} x\mspace{-2mu}+\mspace{-2mu}b_i\right)\mspace{-2mu}+\mspace{-2mu}b_0$ for $\theta=\big(\{\beta_i,w_i,b_i\}_{i=1}^k,b_0\big)$ by $g_{\theta}$, whenever the underlying $\theta$ needs to be emphasized.

We first state an auxiliary result which will be useful in the proofs that follow. For $b \geq 0$, an integer $l \geq 0$, and an open set $\Ucal \subseteq \RR^d$ containing the origin, consider the  class $\cS_{l,b}(\Ucal)$ of square-integrable functions defined below:  
\begin{equation}
  \cS_{l,b}(\Ucal):=\left\{f \in L^1(\Ucal) \cup L^2(\Ucal):\ \ \begin{aligned} &\abs{f(0)}\leq b, ~D^{\boldsymbol{\alpha}}f \mbox{ exists Lebesgue a.e. on } \Ucal ~\forall\boldsymbol{\alpha} \mbox{ s.t. } |\boldsymbol{\alpha}|=l,\\ &\norm{D^{\boldsymbol{\alpha}}f}_{L^2(\Ucal)} \leq b \mbox{ for } \abs{\boldsymbol{\alpha}} \in \{1,l\} \end{aligned}\right\}.  \label{squareintclass}
\end{equation}
The following lemma  states that  functions in  $\cS_{l,b}(\RR^d)$ with sufficient smoothness order $l$ belong to the Barron class. \revised{Its proof essentially follows using arguments from \cite{Barron_1993}, where it was mentioned without explicit quantification. Below, we provide a proof for completeness.}
\begin{lemma}[Smoothness and Barron class]
\label{propsuffcond} 
If $f \in  \cS_{s,b}\left(\RR^d\right)$ for $s:=\lfloor\frac{d}{2}\rfloor+2$, then  we have 
\begin{subequations}
\begin{align}
&  B(f)  \leq b\kappa_d ~\sqrt{d}, \label{bndfourcoeff}\\
& \kappa_d^2:=(d+d^s)\int_{\RR^d}\big(1+\norm{\omega}^{2(s-1)}\big)^{\mspace{-3mu}-1}\dd \omega<\infty.\label{constkappa}
\end{align} %
\end{subequations}
Consequently, $\cS_{s,b}\left(\RR^d\right) \subseteq \cB_{b\kappa_d \sqrt{d} \vee b}$.
\end{lemma}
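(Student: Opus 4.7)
The goal is to bound $B(f) = \int_{\RR^d} \sup_{x \in \X} |\omega\cdot x| \, F(d\omega)$, where $F(d\omega) = |\hat f(\omega)| d\omega$ is the magnitude of the spectral measure of $f$ and $\X = [0,1]^d$. The plan is to first control $B(f)$ by the first moment $\int \|\omega\| |\hat f(\omega)| d\omega$ via the geometric bound
\[
\sup_{x \in [0,1]^d} |\omega \cdot x| \le \|\omega\|_1 \le \sqrt{d}\,\|\omega\|_2,
\]
reducing the problem to estimating $\sqrt{d}\int \|\omega\|\,|\hat f(\omega)|\,d\omega$.

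To estimate this first moment, I will insert the weight $(1+\|\omega\|^{2(s-1)})^{\pm 1/2}$ and apply Cauchy--Schwarz:
\[
\int \|\omega\|\,|\hat f(\omega)|\,d\omega \le \left(\int \|\omega\|^2\bigl(1+\|\omega\|^{2(s-1)}\bigr)|\hat f(\omega)|^2 d\omega\right)^{\!1/2}\!\left(\int \bigl(1+\|\omega\|^{2(s-1)}\bigr)^{-1} d\omega\right)^{\!1/2}.
\]
Split the first factor into $\int \|\omega\|^2 |\hat f|^2 d\omega + \int \|\omega\|^{2s}|\hat f|^2 d\omega$. Using the multinomial expansion $\|\omega\|^{2s}=\sum_{|\boldsymbol\alpha|=s}\binom{s}{\boldsymbol\alpha}\omega^{2\boldsymbol\alpha}$ (and similarly for $\|\omega\|^2$), Plancherel's theorem identifies each $\int \omega^{2\boldsymbol\alpha}|\hat f(\omega)|^2 d\omega$ with (a constant times) $\|D^{\boldsymbol\alpha}f\|_{L^2(\RR^d)}^2$. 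The hypothesis $f\in\cS_{s,b}(\RR^d)$ bounds every such $L^2$-norm by $b$ for $|\boldsymbol\alpha|\in\{1,s\}$, so the multinomial identity $\sum_{|\boldsymbol\alpha|=s}\binom{s}{\boldsymbol\alpha}=d^s$ (and $d$ for first order) yields
\[
\int \|\omega\|^2|\hat f|^2 d\omega \le d\,b^2, \qquad \int \|\omega\|^{2s}|\hat f|^2 d\omega \le d^s b^2,
\]
so the first factor is $\le b\sqrt{d+d^s}$. Combined with the definition \eqref{constkappa}, this gives $\int\|\omega\|\,|\hat f(\omega)|\,d\omega \le b\kappa_d$ and hence $B(f)\le b\kappa_d\sqrt{d}$, proving \eqref{bndfourcoeff}.

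It remains to check that $\kappa_d<\infty$, i.e.\ that $\int_{\RR^d}(1+\|\omega\|^{2(s-1)})^{-1}d\omega<\infty$. A polar-coordinate computation reduces this to convergence of $\int_0^\infty r^{d-1}(1+r^{2(s-1)})^{-1}dr$, which holds iff $2(s-1)>d$. With $s=\lfloor d/2\rfloor+2$ one has $2(s-1)=2\lfloor d/2\rfloor+2\ge d+1>d$, so the integral is finite. Finally, to conclude $\cS_{s,b}(\RR^d)\subseteq \cB_{b\kappa_d\sqrt{d}\vee b}$, I combine the just-proved bound on $B(f)$ with the hypothesis $|f(0)|\le b$ built into the definition \eqref{squareintclass}, so $B(f)\vee|f(0)|\le b\kappa_d\sqrt d \vee b$, matching the Barron-class radius in \eqref{fourintclas}.

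The main technical subtlety, rather than any single hard calculation, is the Fourier-analytic setup: the hypotheses only give derivatives existing Lebesgue a.e.\ with finite $L^2$-norms, so one must interpret $D^{\boldsymbol\alpha}f$ distributionally and justify that $\widehat{D^{\boldsymbol\alpha}f}=(i\omega)^{\boldsymbol\alpha}\hat f$ for $|\boldsymbol\alpha|\in\{1,s\}$ via $f\in L^1\cup L^2$, so that Plancherel applies and the spectral measure admits density $\hat f$. Once this is in place, the rest is the Cauchy--Schwarz/Plancherel chain sketched above.
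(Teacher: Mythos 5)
Your proposal is correct and follows essentially the same route as the paper's own proof: the Cauchy--Schwarz bound $\sup_{x\in[0,1]^d}|\omega\cdot x|\le\sqrt d\,\|\omega\|$, then Cauchy--Schwarz against the weight $(1+\|\omega\|^{2(s-1)})^{\pm1/2}$, then Plancherel to convert the weighted spectral integral into sums of $\|D^{\boldsymbol\alpha}f\|_{L^2}^2$ over $|\boldsymbol\alpha|\in\{1,s\}$. Your treatment is in fact a touch more careful than the paper's on two small points: you make explicit the multinomial expansion $\|\omega\|^{2s}=\sum_{|\boldsymbol\alpha|=s}\binom{s}{\boldsymbol\alpha}\omega^{2\boldsymbol\alpha}$ and the identity $\sum_{|\boldsymbol\alpha|=s}\binom{s}{\boldsymbol\alpha}=d^s$ (the paper silently sums over ordered tuples, which is equivalent but could be misread as a sum over distinct multi-indices), and you explicitly verify $\kappa_d<\infty$ via $2(s-1)=2\lfloor d/2\rfloor+2\ge d+1>d$, which the paper leaves implicit.
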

\begin{proof}
 Since $f \in L^1\left(\RR^d\right) \cup L^2\left(\RR^d\right)$, its Fourier transform $\hat f(\omega)$ exists, and hence, $F(d\omega)=\abs{\hat f(\omega)}d\omega$. Then,  it follows that
    \begin{align}
   & B(f)= \int_{\RR^d} \sup_{x \in \X} \abs{\omega \cdot x}\abs{\hat f(\omega)}d\omega \leq \sqrt{d}\int_{\RR^d} \norm{\omega} \abs{\hat f(\omega)}d\omega, \label{CSapprox}
\end{align}
where we used $\sup_{x \in \X} \abs{\omega \cdot x} \leq \sqrt{d} \norm{\omega}$ which holds by Cauchy-Schwarz inequality.

Next, recall that if the partial derivatives $D^{\boldsymbol{\alpha}}f$, $|\boldsymbol{\alpha}|=s$, exists  on $\mathbb{R}^d$, then all partial derivatives  $D^{\boldsymbol{\alpha}}f$, $0 \leq |\boldsymbol{\alpha}| \leq s$, also exists. Hence, if $\norm{D^{\boldsymbol{\alpha}}f}_{L^2\left(\RR^d\right)} \leq b$ for all $\alpha$ with $\abs{\alpha} \in \{1,s\}$, we have
\begin{align}
 \int_{\RR^d} \norm{\omega} \abs{\hat f(\omega)} d\omega &\stackrel{(a)}{\leq}  \left(\int_{\RR^d}  \frac{d \omega}{1+\norm{\omega}^{2(s-1)}}\right)^{\frac 12} \left(\int_{\RR^d}  \left(\norm{\omega}^2+\norm{\omega}^{2s}\right)\abs{\hat f(\omega)}^2 d\omega\right)^{\frac 12}  \label{bndfcdercoeff}\\
   & \stackrel{(b)}{\leq} \left(\int_{\RR^d}  \frac{d \omega}{1+\norm{\omega}^{2(s-1)}}\right)^{\frac 12} \left(\sum_{\boldsymbol{\alpha}:\abs{\boldsymbol{\alpha}}=1}\norm{D^{\boldsymbol{\alpha}}f}^2_{L^2\left(\RR^d\right)}+\sum_{\boldsymbol{\alpha}:\abs{\boldsymbol{\alpha}}=s}\norm{D^{\boldsymbol{\alpha}}f}^2_{L^2\left(\RR^d\right)}\right)^{\frac 12} \notag \\
  & \stackrel{(c)}{\leq}  \kappa_d b, \label{bndfcfval}
\end{align}
where 
\begin{enumerate}[label = (\alph*),leftmargin=*]
    \item follows from  Cauchy-Schwarz inequality;
    \item is due to Plancherel's theorem;
    \item follows  since $\abs{\{\boldsymbol{\alpha}:\abs{\boldsymbol{\alpha}}=s\}}=d^s$ and  $\norm{D^{\boldsymbol{\alpha}}f}_{L^2\left(\RR^d\right)} \leq b$.  
\end{enumerate}
Combining \eqref{CSapprox} and \eqref{bndfcfval} leads to \eqref{bndfourcoeff}. The final claim follows from \eqref{fourintclas} and \eqref{bndfourcoeff} by noting that $\abs{f(0)} \leq b$ by definition.
\end{proof}

\subsection{Proof of Theorem \ref{supaapprox}} \label{supnormapprox-proof}
\revised{The proof relies on arguments from \cite{Barron-1992} and \cite{Barron_1993}, along with the uniform central limit theorem for uniformly bounded VC function classes.} Fix an arbitrary (small) $\delta>0$, and let  $f:\RR^d \rightarrow \RR$ be such that $\tilde f=f|_\cX$ and $B(f) \vee f(0) \leq c+\delta$. This is possible since  $ c_B^\star(\tilde f)\leq c$. Then, it follows from the proof of \citet[Theorem 2]{Barron_1993} that 
\begin{align}
  f_0(x):=  f(x)- f(0)=\int_{\omega \in \mathbb{R}^d \setminus \{0\}} \varrho(x,\omega)  \mu(d\omega),  \notag
\end{align}
where
\begin{align}
&\varrho(x,\omega)=\frac{B(f)}{\sup_{x \in \X} \abs{\omega \cdot x}}\left(\cos(\omega \cdot x+ \zeta(\omega))-\cos(\zeta(\omega))\right), \notag \\
 & B(f):= \int_{\RR^d} \sup_{x \in \X} \abs{\omega \cdot x}F(d\omega), \notag\\
 & \mu(d\omega)= \frac{\sup_{x \in \X} \abs{\omega \cdot x}F(d\omega)}{B(f)}, \notag 
\end{align}
and $\zeta:\mathbb{R}^d \rightarrow \mathbb{R}$. Note that $\mu \in \mathcal{P}(\mathbb{R}^d)$ is a probability measure. 

Let $\tilde{\Theta}_1\left(k,B(f)\right):=\Theta_{1}(\sqrt{k} \log k,2B(f),0) $ (see \eqref{paramspace}). Then, it further follows from the proofs\footnote{The claims in \citet[Lemma 2- Lemma 4, ~Theorem 3]{Barron_1993}  are stated for  $L_2$ norm, but it is not hard to see from the proof therein that the same also holds for $L^{\infty}$ norm, apart from the following subtlety. In the proof of Lemma 3, it is shown that $\varrho(x,\omega)$, $\omega \in \mathbb{R}^d$, lies in the convex closure of a certain class of step functions, whose  discontinuity points are adjusted to coincide with the continuity points of the underlying measure $\mu$. Similarly, here, the step discontinuities needs to be adjusted to coincide with the continuity points of both $P$ and $Q$. Nevertheless, the same arguments hold since the common continuity points of $P$ and $Q$ form a dense set.} of \citet[Lemma 2-Lemma 4,Theorem 3]{Barron_1993} that  there exists a  probability measure $\mu_k \in \mathcal{P}\left(\tilde{\Theta}_1\left(k,B(f)\right)\right)$ (see \citet[Eqns. (28)-(32)]{Barron_1993}) such that
\begin{align}
\norm{f_0- \int_{\theta \in \tilde{\Theta}_1\left(k,B(f)\right) }  g_{\theta}(\cdot)~   \mu_k\left(d\theta\right)}_{\infty, P,Q} \leq \frac{2 (B(f)+1)}{\sqrt{k}}, \label{approxcc}
\end{align}
 where  $ g_{\theta}(x)=\beta \phi\left(w\cdot x+b\right)$ for $\theta=(\beta,w,b)$. Note that 
$\int_{\tilde{\Theta}_1\left(k,B(f)\right)}\mu_k(d\theta) =1<\infty$.

Next, for each fixed $x$, let  $\upsilon_x:\tilde{\Theta}_1\left(k,B(f)\right) \rightarrow \RR$ be given by $\upsilon_x(\theta):=g_{\theta}(x)$, and consider the function class $\mathcal{V}_k\left(\tilde{\Theta}_1\left(k,B(f)\right)\right)=\left\{\upsilon_x,~x \in \RR^d\right\}$.  
Note that every $ \upsilon_x \in \mathcal{V}_k\left(\tilde{\Theta}_1\left(k,B(f)\right)\right)$ is a composition of an affine function in $\theta$ with the bounded monotonic function  $\beta \phi (\cdot)$. Hence, noting that $\mathcal{V}_k\left(\tilde{\Theta}_1\left(k,B(f)\right)\right)$  is a VC  function class (\cite{AVDV-book}), it follows from 
\citet[Theorem 2.8.3]{AVDV-book} that   it is a uniform Donsker class (in particular, $\mu_k$-Donsker) for all probability measures $\mu \in \mathcal{P}\left(\tilde{\Theta}_1\left(k,B(f)\right)\right)$. 
Furthermore, an application of \citet[Corollary 2.2.8]{AVDV-book}) yields that there exists  $k$ parameter vectors, $\theta_i:=(\beta_i,w_i,b_i) \in \tilde{\Theta}_1\left(k,B(f)\right),~ 1 \leq i \leq k$,
such that (see also \citet[Theorem 2.1]{Yukich-1995}) 
\begin{align}
 \sup_{x \in \mathbb{R}^d}\abs{\int_{\theta \in \tilde{\Theta}_1\left(k,B(f)\right)} g_{\theta}(x)~   \mu_k(d\theta)- \frac 1k \sum_{i=1}^k  g_{\theta_i}(x)} \leq \hat c_d B(f) k^{-\frac 12}, \label{donskerprop}
\end{align}
where $\hat c_d$ is a constant which depends only on $d$. Note that the R.H.S. of \eqref{donskerprop} is independent of $\mu_k$ and depends on $ f$ and $\X$ only via $B(f)$.

From \eqref{approxcc}, \eqref{donskerprop} and triangle inequality, we obtain
\begin{align}
     \norm{ f_0- \frac 1k \sum_{i=1}^k  g_{\theta_i}}_{\infty,P,Q}  \leq \left(\hat c_d B(f)+2 B(f)+2\right)k^{-\frac 12}.\notag
\end{align}
Setting $\theta=\left\{\left\{\left(\frac{\beta_i}{k},w_i,b_i\right)\right\}_{i=1}^k, f(0)\right\}$ and $g_{\theta}(x)= f(0)+\frac 1k \sum_{i=1}^k  g_{\theta_i}(x)$, we have 
\begin{align}
  \norm{f- g_{\theta}}_{\infty,P,Q}  \leq \left((\hat c_d+2) B(f)+2\right)k^{-\frac 12} \leq \left((\hat c_d+2) (c+\delta)+2\right)k^{-\frac 12}. \notag
\end{align}
Next, note that $   \norm{\tilde f- g_{\theta}}_{\infty,P,Q}=  \norm{ f- g_{\theta}}_{\infty,P,Q}$ and $ g_{\theta} \in \mathcal{G}_k^*\left(B(f) \vee f(0)\right) \subseteq \mathcal{G}_k^*\left(c+\delta\right)
$. Since  $\delta>0$ is arbitrary, we obtain that there exists $ g_{\theta} \in  \mathcal{G}_k^*\left(c\right)
$
\begin{align}
  \norm{\tilde f- g_{\theta}}_{\infty,P,Q}   \leq \left((\hat c_d+2) c+2\right)k^{-\frac 12} =:  \tilde C_{d,c}~k^{-\frac 12}, \label{finaapperrbarcls}
\end{align}
thus proving the  claim in \eqref{approxratefin}.

\revised{On the other hand, it follows similar to \eqref{bndfcdercoeff} in Lemma \ref{propsuffcond} that for a fixed $\epsilon>0$ and $l(\epsilon)=d/2+1+\epsilon$, the set of functions $f \in \RR^d \rightarrow \RR$  such that $B(f) \leq c$ includes those whose Fourier transform $\hat f(\omega)$ satisfies
\begin{align}
   \int_{\RR^d}  \left(\norm{\omega}^2+\norm{\omega}^{2l(\epsilon)}\right)\abs{\hat f(\omega)}^2 d\omega \leq c^2 d^{-1}\left(\int_{\RR^d}  \frac{d \omega}{1+\norm{\omega}^{2(l(\epsilon)-1)}}\right)^{-1},
\end{align}
since $\int_{\RR^d}  \frac{d \omega}{1+\norm{\omega}^{2(l(\epsilon)-1)}}<\infty$.  Then, \eqref{approxratelb} follows from the proof of \cite{Barron-1992}[Theorem 3]. Note from the proof therein that the constant in \eqref{approxratelb} may in general depend on $d$ and $\epsilon$.}
\subsection{Proof of Corollary \ref{cor:bndfourcoeff}}\label{cor:bndfourcoeff-proof}
By Theorem \ref{supaapprox}, it suffices to show that there exists an extension $f_{\mathsf{e}}$ of $f$ from $\Ucal$ to $\RR^d$ such that $B(f_{\mathsf{e}}) \vee f_{\mathsf{e}}(0) \leq \bar c_{b,c,d} $. 
Let $\boldsymbol{\alpha}_j$ denote a multi-index of order $j$, and recall that $s:=\lfloor\frac{d}{2}\rfloor+2$.  Consider an extension of $D^{\boldsymbol{\alpha}_s}f$ from $\Ucal$ to $\RR^d$ for each $\boldsymbol{\alpha}_s$ as follows:
\begin{align}
    D^{\boldsymbol{\alpha}_s}f(x):=\inf_{x' \in \Ucal} D^{\boldsymbol{\alpha}_s}f(x')+c\norm{x-x'}^{\delta}, ~x \in \RR^d \setminus \Ucal. \label{holdcontext}
\end{align}
Note that $D^{\boldsymbol{\alpha}_s}f$ extended this way  is  H\"{o}lder continuous with the same constant $c$ and exponent $\delta$ on $\RR^d$. 
Fixing $D^{\boldsymbol{\alpha}_s}f$ on $\RR^d$ induces an extension of all lower (and also higher) order derivatives $D^{\boldsymbol{\alpha}_j}f,~0 \leq j<s$  to $\RR^d$,  which can be defined recursively as $D^{\boldsymbol{\alpha}_1}D^{\boldsymbol{\alpha}_{s-j}}f(x)=D^{\boldsymbol{\alpha}_1+\boldsymbol{\alpha}_{s-j}}f(x)$, $x \in \RR^d$, for all $\boldsymbol{\alpha}_1$, $\boldsymbol{\alpha}_{s-j}$ and  $j=1, \ldots, s$.

 Let  $\Ucal':=\{x' \in \RR^d: \norm{x'-x} < 1 \mbox{ for some }x \in \X\}$. Suppose $\Ucal \subset \Ucal'$. 
By the mean value theorem, we have for any $x,x' \in \Ucal'$ and $j=1,\ldots,s$,
\begin{align}
  \abs{D^{\boldsymbol{\alpha}_{s-j}}f(x')} & \leq  \abs{D^{\boldsymbol{\alpha}_{s-j}}f(x)}+\max_{\substack{\tilde x \in \Ucal',\\\boldsymbol{\alpha}_1}}\abs{D^{\boldsymbol{\alpha}_{s-j}+\boldsymbol{\alpha}_1}f(\tilde x)} \norm{x-x'}_1 \notag \\
    &\leq  \abs{D^{\boldsymbol{\alpha}_{s-j}}f(x)}+\max_{\substack{\tilde x \in \Ucal',\\\boldsymbol{\alpha}_1}}\abs{D^{\boldsymbol{\alpha}_{s-j}+\boldsymbol{\alpha}_1}f(\tilde x)} \sqrt{d} \norm{x-x'}, \label{recuderval}
\end{align}
where the last step follows from $\norm{x-x'}_1 \leq \sqrt{d} \norm{x-x'}$.  Also, note from \eqref{holdcontext} that 
$ D^{\boldsymbol{\alpha}_s}f(x) < b+c$ for all $x \in \Ucal'$, and recall that since $f \in \cH_{b,c}^{s,\delta}(\Ucal)$, we have $ \abs{D^{\boldsymbol{\alpha}_{s-j}}f(x)} \leq b$ for all $x \in \Ucal$. Then, for any $x' \in \Ucal'$,  taking $x \in \X$  satisfying $\norm{x-x'}\leq 1$ (such an $x$ exists by definition of $\Ucal'$) in \eqref{recuderval} yields 
\begin{align}
 \abs{D^{\boldsymbol{\alpha}_{s-1}}f(x')} \leq b+(b+c)\sqrt{d}.  \label{startrecur}  
\end{align}
Starting from \eqref{startrecur} and  recursively applying  \eqref{recuderval}, we obtain for $j=1,\ldots,s$, and $x' \in \Ucal'$,
\begin{align}
   \abs{D^{\boldsymbol{\alpha}_{s-j}}f(x')} &\leq b \sum_{i=1}^j d^{\frac{i-1}{2}} +(b+c)d^{\frac{j}{2}} \leq b\frac{1-d^{\frac{s}{2}}}{1-\sqrt{d}}+(b+c)d^{\frac{s}{2}}=: \tilde b. \label{bndderallpar}
\end{align}
 Thus, the extension $f$ from $\Ucal$ to $\RR^d$ satisfies $f|_{\Ucal'} \in \cH_{\tilde b,c}^{s,\delta}(\Ucal')$. If $\Ucal' \subseteq \Ucal$, then $f|_{\Ucal'} \in \cH_{b,c}^{s,\delta}(\Ucal')$ by definition, and thus, in either case, $f|_{\Ucal'} \in \cH_{\tilde b,c}^{s,\delta}(\Ucal')$.
 
 The desired final extension is $f_{\mathsf{e}}:\RR^d \rightarrow \RR$ given by $f_{\mathsf{e}}(x):=f(x)\cdot f_{\mathsf{C}}(x)$, where 
 \begin{align}
   &  f_{\mathsf{C}}(x):=\ind_{\X'} \ast \psi_{\frac 12} (x):=\int_{\RR^d} \ind_{\X'}(y) \psi_{\frac 12}(x-y) dy,~x\in \RR^d,\label{cutofffndef} \\
   & \X':=\left\{x' \in \RR^d: \norm{x'-x} \leq 0.5 \mbox{ for some }x \in \X\right\},\notag  \\
   & \psi (x):=\begin{cases} u^{-1}e^{-\frac{1}{\frac 12-\norm{x}^2}},~& \norm{x} <\frac{1}{2}, \\0,& \mbox{otherwise}, \end{cases}
 \end{align}
 and $u$ is the normalization constant such that $\int_{\RR^d} \psi (x) dx=1$. Note that $\psi \in \mathsf{C}^{\infty}\left(\RR^d\right)$, and consequently, $f_{\mathsf{C}} \in \mathsf{C}^{\infty}\left(\RR^d\right)$ from \eqref{cutofffndef} by dominated convergence theorem. Also, observe that $f_{\mathsf{C}}(x)=1$ for $x \in \X$, $f_{\mathsf{C}}(x)=0$ for $x \in \RR^d \setminus \Ucal'$ and $f_{\mathsf{C}}(x)  \in (0,1)$ for $x \in  \Ucal'  \setminus \X$. Hence, $f_{\mathsf{e}}(x)=f(x)$ for $x \in \X$, $f_{\mathsf{e}}(x)=0$ for $x \in \RR^d \setminus \Ucal'$ and $\abs{f_{\mathsf{e}}(x)} \leq  \abs{f(x)}$ for $x \in  \Ucal'  \setminus \X$, thus satisfying $f_{\mathsf{e}}|_\cX=f|_\cX=\tilde f$ as required. Moroever, for all $j=0,\ldots,s$,
 \begin{subequations}\label{derbndinu} 
 \begin{equation}\label{derbndinu1} 
    \abs{D^{\boldsymbol{\alpha_j}}f_{\mathsf{e}}(x)} \stackrel{(a)}{\leq}  2^j\tilde b \max_{\substack{x \in \Ucal',\\\boldsymbol{\alpha}:\abs{\boldsymbol{\alpha}} \leq j }} \abs{D^{\boldsymbol{\alpha}}f_{\mathsf{C}}(x)}  \stackrel{(b)}{\leq} 2^s\tilde b \max_{\substack{x:\norm{x} \leq 0.5,\\\boldsymbol{\alpha}:\abs{\boldsymbol{\alpha}}\leq s }} \abs{D^{\boldsymbol{\alpha}}\psi(x)}=:\hat b,~x\in \Ucal', 
 \end{equation} 
  \begin{equation}\label{derbndinu2} 
   D^{\boldsymbol{\alpha_j}}f_{\mathsf{e}}(x)=0,~ x\notin \Ucal',
 \end{equation} 
 \end{subequations}
 where 
\begin{enumerate}[label = (\alph*),leftmargin=*]
     \item follows using chain rule for differentiation and \eqref{bndderallpar};
     \item follows from the definition in \eqref{cutofffndef}.
 \end{enumerate}
Then,  we have for $j=0,\ldots,s$,
\begin{align}
\norm{D^{\boldsymbol{\alpha_j}}f_{\mathsf{e}}}_{L^2\left(\RR^d\right)}^2  &=\int_{\RR^d} (D^{\boldsymbol{\alpha_j}}f_{\mathsf{e}})^2(x) dx \notag \\
&=\int_{\Ucal'} (D^{\boldsymbol{\alpha_j}}f_{\mathsf{e}})^2(x) dx \leq \hat b^2~ \mathsf{Vol}_d(0.5\sqrt{d}+1)\notag \\
&=\hat b^2 \frac{\pi^{\frac{d}{2}}}{\Gamma(\frac{d}{2}+1)} (0.5\sqrt{d}+1)^d=:\bar b, \label{allderivsqint}
\end{align}
where $\mathsf{Vol}_d(r)$ denotes the volume of a Euclidean ball in $\RR^d$ with radius $r$ and $\Gamma$ denotes the gamma function.
 Defining $b':=\sqrt{\bar b}$ and noting that $b' \geq \hat b$, we have from \eqref{derbndinu} and \eqref{allderivsqint} that $ f_{\mathsf{e}}(x) \in \tilde{\cS}_{s,b'}\left(\RR^d\right) $, where  
\begin{equation}
  \tilde{\cS}_{s,b'}\left(\RR^d\right):=\left\{f \in L^1\left(\RR^d\right) \cup L^2\left(\RR^d\right):\ \ \begin{aligned} &\abs{f(0)}\leq b', ~D^{\boldsymbol{\alpha}}f \mbox{ exists Lebesgue a.e. on } \RR^d ~\forall\boldsymbol{\alpha} \mbox{ s.t. } |\boldsymbol{\alpha}|=s,\\ &\norm{D^{\boldsymbol{\alpha}}f}_{L^2\left(\RR^d\right)} \leq b' \mbox{ for } \abs{\boldsymbol{\alpha}} = 1,\ldots,s \end{aligned}\right\}.  \label{squareintclassallder}
\end{equation}
 Observe that $\tilde{\cS}_{s,b'}\left(\RR^d\right) \subseteq \cS_{s,b'}\left(\RR^d\right)$ (see \eqref{squareintclass}). This  implies via Lemma  \ref{propsuffcond} that $B(f_{\mathsf{e}}) \leq c':=\kappa_d \sqrt{d}\mspace{2 mu}b'$ and 
\begin{align}
    f_{\mathsf{e}} \in \cB_{ b'   \vee c'} \cap  \tilde{\cS}_{s,b'}\left(\RR^d\right) \subseteq \cB_{ b'   \vee c'} \cap  \cS_{s,b'}\left(\RR^d\right). \label{finextchar}
\end{align}
 Then, by defining
 \begin{align}
     \bar c_{b,c,d}&:=b' \vee c', 
     \label{constapproxhold}
 \end{align}
 where 
  \begin{align}
    &b'=\pi^{\frac{d}{4}}\Gamma^{-1/2}(0.5d+1) (0.5\sqrt{d}+1)^{\frac{d}{2}}2^s \left(b\frac{1-d^{\frac{s}{2}}}{1-\sqrt{d}}+(b+c)d^{\frac{s}{2}}\right) \max_{\substack{x:\norm{x} \leq 0.5,\\\boldsymbol{\alpha}:\abs{\boldsymbol{\alpha}}\leq s }} \psi^{\boldsymbol{(\alpha)}}(x) ,\label{fourcoeffholdval1} \\
     &c'=\sqrt{d}\kappa_d b' , \label{fourcoeffholdval2} \\
  & \kappa_d^2=  (d+d^s)\int_{\RR^d}\big(1+\norm{\omega}^{2(s-1)}\big)^{\mspace{-3mu}-1}\dd \omega, \notag
\end{align}
it  follows from Theorem \ref{supaapprox} (see \eqref{finaapperrbarcls}) that there exists $g \in \mathcal{G}_k^*\left(\bar c_{b,c,d}\right)$ such~that
\begin{align}
\big\|\tilde f-g\big\|_{\infty, P,Q} \leq \tilde C_{d,\bar c_{b,c,d}} ~k^{-\frac 12}.
\end{align}
This completes the proof.
\subsection{Proof of Theorem \ref{empesterrbnd}} \label{empesterrbnd-proof}
We will show that Theorem \ref{empesterrbnd} holds with 
\begin{align}
& V_{k,\mathbf{a},\gamma}:= 4Ca_2^2 \mspace{1 mu} k \mspace{1 mu} R_{k,\mathbf{a},\gamma}^2, \label{Vkconstdef} \\
   & E_{k,\mathbf{a},n,\gamma}:= 
    2 \sqrt{2}n^{-\frac 12}k a_2 R_{k,\mathbf{a},\gamma}=4 \sqrt{2}n^{-\frac 12}k^{3/2} a_2 \left(\bar{\gamma}'_{\mathcal{G}_{k}(\mathbf{a})}+1\right), \label{Ekconstdef}
   \end{align}
   where
\begin{align}
   &R_{k,\mathbf{a},\gamma}:= 2\left(\bar{\gamma}'_{\mathcal{G}_{k}(\mathbf{a})}+1\right)\sqrt{k},
\end{align}
and $\bar{\gamma}'_{\mathcal{G}_{k}(\mathbf{a})}$ is defined in \eqref{maxdergamma}.
We have
\begin{flalign}
&\hat{\mathsf{H}}_{\gamma,\mathcal{G}_k(\mathbf{a})}(x^n,y^n)-\mathsf{H}_{\gamma, \mathcal{G}_k(\mathbf{a})}(P,Q) \notag\\
&= \sup_{g_{\theta} \in \mathcal{G}_k(\mathbf{a})} \frac 1n \sum_{i=1}^n g_{\theta}(x_i)-\frac 1n \sum_{i=1}^n \gamma(g_{\theta}(y_i))-\left(\sup_{g_{\theta} \in \mathcal{G}_k(\mathbf{a})}\mathbb{E}_P[g_{\theta}(X)]-\mathbb{E}_Q\left[\gamma(g_{\theta}(Y))\right]\right) \notag \\
    &\leq   \sup_{g_{\theta} \in \mathcal{G}_k(\mathbf{a})} \frac 1n \sum_{i=1}^n g_{\theta}(x_i)-\frac 1n \sum_{i=1}^n \gamma(g_{\theta}(y_i))-\mathbb{E}_P[g_{\theta}(X)]+\mathbb{E}_Q\left[\gamma(g_{\theta}(Y))\right]. \label{supdiffer}&&
\end{flalign}
Let 
\begin{align}
Z_{\theta}:=\frac 1n \sum_{i=1}^n g_{\theta}(X_i)-\frac 1n \sum_{i=1}^n \gamma\left(g_{\theta}(Y_i)\right)-\mathbb{E}_P[g_{\theta}(X)]+\mathbb{E}_Q\left[\gamma(g_{\theta}(Y))\right]. \label{defnztheta}
\end{align}
We have
\begin{flalign}
 & \abs{   Z_{\theta}-  Z_{\theta'}} \leq  \sum_{i=1}^n \frac 1n \abs{g_{\theta}(X_i)-g_{\theta'}(X_i)-\mathbb{E}_P[g_{\theta}(X)-g_{\theta'}(X)]} \notag \\
 &\qquad \qquad \qquad \qquad +  \frac 1n \abs{\gamma(g_{\theta}(Y_i))-\gamma(g_{\theta'}(Y_i))-\mathbb{E}_Q\left[\gamma(g_{\theta}(Y))-\gamma(g_{\theta'}(Y))\right]}. \label{sumzthet} &&
\end{flalign}
 Since $0 \leq \phi(x)\leq 1$ for all $x \in \mathbb{R}^d$,  for any $x,x' \in \mathcal{X}$ and $\theta=\left(\{\beta_i,w_i,b_i\}_{i=1}^k,b_0\right),\theta'=\left(\{\beta'_i,w'_i,b'_i\}_{i=1}^k,b'_0\right) \in \Theta_{k}(\mathbf{a})$,  
 \begin{equation}
  \abs{g_{\theta}(x)-g_{\theta'}(x')} \leq \sum_{i=1}^k \abs{\beta_i-\beta_i'} \leq  \norm{\boldsymbol{\beta}(\theta)-\boldsymbol{\beta}(\theta')}_1,\label{difffngk}
   \end{equation}
   where $\boldsymbol{\beta}(\theta):=(\beta_1,\ldots,\beta_k)$.   Moreover, an application of the mean value theorem yields that for all $\theta,\theta' \in \Theta_{k}(\mathbf{a})$,
    \begin{equation}
   \abs{\gamma(g_{\theta}(x))-\gamma(g_{\theta'}(x'))} \leq \bar{\gamma}'_{\mathcal{G}_{k}(\mathbf{a})} \abs{g_{\theta}(x)-g_{\theta'}(x')} \leq \bar{\gamma}'_{\mathcal{G}_{k}(\mathbf{a})}   \norm{\boldsymbol{\beta}(\theta)-\boldsymbol{\beta}(\theta')}_1,  \label{diffgamfngk}
    \end{equation} 
 where $\bar{\gamma}'_{\mathcal{G}_{k}(\mathbf{a})}$ is defined in \eqref{maxdergamma}.
Hence, with probability one
\begin{flalign}
& \frac 1n \abs{g_{\theta}(X_i)-g_{\theta'}(X_i)-\mathbb{E}_P[g_{\theta}(X_i)-g_{\theta'}(X_i)]}+ \frac 1n \big|\gamma(g_{\theta}(Y_i))-\gamma(g_{\theta'}(Y_i))  -\mathbb{E}_Q\left[\gamma(g_{\theta}(Y_i))-\gamma(g_{\theta'}(Y_i))\right]\big| \notag \\
&  \leq \frac 1n \left[ \abs{g_{\theta}(X_i)-g_{\theta'}(X_i)}+ \abs{\mathbb{E}_P[g_{\theta}(X_i)-g_{\theta'}(X_i)]} + \abs{\gamma(g_{\theta}(Y_i))-\gamma(g_{\theta'}(Y_i))} + \abs{\mathbb{E}_Q\left[\gamma(g_{\theta}(Y_i))-\gamma(g_{\theta'}(Y_i))\right]} \right]\notag \\
 &\leq  \frac{1}{n} s_{k,\mathbf{a},\gamma} \norm{\boldsymbol{\beta}(\theta)-\boldsymbol{\beta}(\theta')}_1, \label{bndtermstv} &&
\end{flalign}
where $s_{k,\mathbf{a},\gamma}:=2\left(\bar{\gamma}'_{\mathcal{G}_{k}(\mathbf{a})}+1\right)$. Note that $\mathbb{E}\left[Z_{\theta}\right]=0$ for all $\theta \in \Theta_{k}(\mathbf{a})$. Then, using the fact that $\norm{\boldsymbol{\beta}(\theta)-\boldsymbol{\beta}(\theta')}_1 \leq \sqrt{k}\norm{\boldsymbol{\beta}(\theta)-\boldsymbol{\beta}(\theta')}$, it follows from \eqref{sumzthet} and \eqref{bndtermstv} via Hoeffding's lemma that 
\begin{align}
    \mathbb{E}\left[e^{t\left(Z_{\theta}-Z_{\theta'}\right)} \right]\leq   e^{\frac{1}{2}t^2 \mathsf{d}_{k,\mathbf{a},n,\gamma}(\theta,\theta')^2}, 
\end{align}
 where
\begin{align}
    \mathsf{d}_{k,\mathbf{a},n,\gamma}(\theta,\theta'):= \frac{s_{k,\mathbf{a},\gamma}\sqrt{k}\norm{\boldsymbol{\beta}(\theta)-\boldsymbol{\beta}(\theta')}}{\sqrt{n}}:=\frac{R_{k,\mathbf{a},\gamma}}{\sqrt{n}}\norm{\boldsymbol{\beta}(\theta)-\boldsymbol{\beta}(\theta')}.
\end{align}
It follows that $\{Z_{\theta}\}_{\theta \in \Theta_{k}(\mathbf{a})}$ is a separable subgaussian process on the metric space $( \Theta_{k}(\mathbf{a}), \mathsf{d}_{k,\mathbf{a},n,\gamma}(\theta,\theta'))$.
 Next, note that  $ N\left(\Theta_{k}(\mathbf{a}), \mathsf{d}_{k,\mathbf{a},n,\gamma}(\cdot,\cdot),\epsilon\right)=N\left([-a_{2},a_{2}]^k, n^{-\frac 12}R_{k,\mathbf{a},\gamma}\norm{\cdot},\epsilon\right)$. Also, $[-a_{2},a_{2}]^k \subseteq B^k\left(\sqrt{k} ~a_{2}\right)$. Hence, we have
\begin{flalign}
    N\left(\Theta_{k}(\mathbf{a}), \mathsf{d}_{k,\mathbf{a},n,\gamma}(\cdot,\cdot),\epsilon\right) &\leq N\left(B^k\left(\sqrt{k} ~a_{2}\right), n^{-\frac 12}R_{k,\mathbf{a},\gamma}\norm{\cdot},\epsilon\right) \notag \\ 
    &= N\left(B^k\left(\sqrt{k} ~a_{2}\right), \norm{\cdot},\sqrt{n}R_{k,\mathbf{a},\gamma}^{-1}\epsilon\right) \notag \\
    & \leq \frac{\left(\sqrt{k} ~a_{2}+\sqrt{n} R_{k,\mathbf{a},\gamma}^{-1}\epsilon\right)^{k}}{\left(\sqrt{n} R_{k,\mathbf{a},\gamma}^{-1}\epsilon\right)^{k}} \label{applycovfo}\\
    &=\left(1+\frac{\sqrt{k} ~a_{2}~R_{k,\mathbf{a},\gamma}}{\sqrt{n} \epsilon}\right)^{k}, \notag &&
\end{flalign}
where, in \eqref{applycovfo}, we used that the covering number of Euclidean ball $B^d(r)$ w.r.t. Euclidean norm  satisfies
\begin{align}
   N\left(B^{d}(r),\norm{\cdot},\epsilon\right) \leq \left(\frac{r+\epsilon}{\epsilon}\right)^d.
\end{align}
Also,  for $\epsilon \geq \mathsf{diam}\left(\Theta_{k}(\mathbf{a}),\mathsf{d}_{k,\mathbf{a},n,\gamma}\right):=\max_{\theta,\theta' \in \Theta_{k}(\mathbf{a})}\mathsf{d}_{k,\mathbf{a},n,\gamma}(\theta,\theta')= 2\sqrt{k} \mspace{1 mu}a_{2} R_{k,\mathbf{a},\gamma}n^{-\frac 12}$, we have that $N\left(\Theta_{k}(\mathbf{a}), \mathsf{d}_{k,\mathbf{a},n,\gamma}(\cdot,\cdot),\epsilon\right)=1$. Then, 
\begin{flalign}
E_{k,\mathbf{a},n,\gamma}&:=\int_{0}^{\infty} \sqrt{\log N\left(\Theta_{k}(\mathbf{a}), \mathsf{d}_{k,\mathbf{a},n,\gamma}(\cdot,\cdot),\epsilon\right)}d\epsilon \notag \\
&= \int_{0}^{\mathsf{diam}\left(\Theta_{k}(\mathbf{a}),\mathsf{d}_{k,\mathbf{a},n,\gamma}\right)} \sqrt{\log N\left(\Theta_{k}(\mathbf{a}), \mathsf{d}_{k,\mathbf{a},n,\gamma}(\cdot,\cdot),\epsilon\right)}d\epsilon \notag \\
    & \leq  \sqrt{k}\int_{0}^{\mathsf{diam}\left(\Theta_{k}(\mathbf{a}),\mathsf{d}_{k,\mathbf{a},n,\gamma}\right)} \sqrt{\log \left(1+\frac{a_{2}\sqrt{k}R_{k,\mathbf{a},\gamma}}{\sqrt{n}\epsilon}\right)}d\epsilon \notag \\
    & \leq n^{-\frac{1}{4}} k^{\frac 34} \sqrt{a_{2} R_{k,\mathbf{a},\gamma}}~\int_{0}^{\mathsf{diam}\left(\Theta_{k}(\mathbf{a}),\mathsf{d}_{k,\mathbf{a},n,\gamma}\right)} \epsilon^{-\frac 12 }d\epsilon \label{applylogupbnd}\\
    &=2 k^{\frac 34} n^{-\frac{1}{4}}\sqrt{a_{2} R_{k,\mathbf{a},\gamma}~\mathsf{diam}\left(\Theta_{k}(\mathbf{a}),\mathsf{d}_{k,\mathbf{a},n,\gamma}\right)}, \label{entropyintcondfin} &&
\end{flalign}
where, we used the inequality $\log(1+x) \leq x$ (for $x \geq -1$) in \eqref{applylogupbnd}. It follows from Theorem \ref{thm:tailineq} that there exists a constant $C$ such that for $\delta >0$,
\begin{flalign}
&\mathbb{P}\left(\sup_{g_{\theta} \in \mathcal{G}_k(\mathbf{a})}Z_{\theta} \geq CE_{k,\mathbf{a},n,\gamma}+\delta\right)  = \mathbb{P}\left(\sup_{g_{\theta} \in \mathcal{G}_k(\mathbf{a})} Z_{\theta}-Z_{\mathbf{0}} \geq CE_{k,\mathbf{a},n,\gamma}+ \delta\right) \notag \\
&\leq Ce^{-\frac{\delta^2}{C\mathsf{diam}\left(\Theta_{k}(\mathbf{a}),\mathsf{d}_{k,\mathbf{a},n,\gamma}\right)^2}}=C e^{-\frac{n\delta^2}{4Ca_2^2 R_{k,\mathbf{a},\gamma}^2 k}}, \label{onesiddevineq1} &&
\end{flalign}
where $Z_{\mathbf{0}}=0$. It follows similarly that for $\delta >0$,
\begin{align}
\mathbb{P}\left(\sup_{g_{\theta} \in \mathcal{G}_k(\mathbf{a})}-Z_{\theta} \geq \delta+CE_{k,\mathbf{a},n,\gamma}\right) \leq C e^{-\frac{n \delta^2}{4Ca_2^2 R_{k,\mathbf{a},\gamma}^2 k}}.\label{onesiddevineq2}
\end{align}
Combining \eqref{onesiddevineq1} and \eqref{onesiddevineq2} yields
\begin{align}
\mathbb{P}\left(\sup_{g_{\theta} \in \mathcal{G}_k(\mathbf{a})}|Z_{\theta}| \geq \delta+CE_{k,\mathbf{a},n,\gamma}\right) \leq 2C e^{-\frac{n\delta^2}{4Ca_2^2 R_{k,\mathbf{a},\gamma}^2 k}}. \label{magbndsup}
\end{align}
From \eqref{supdiffer}, \eqref{defnztheta} and   \eqref{magbndsup}, we obtain  that for $\delta >0$,
\begin{flalign}
   & \mathbb{P}\left(\abs{\mathsf{H}_{\gamma, \mathcal{G}_k(\mathbf{a})}(P,Q)-\hat{\mathsf{H}}_{\gamma,\mathcal{G}_k(\mathbf{a})}(X^n,Y^n)} \geq \delta+CE_{k,\mathbf{a},n,\gamma} \right) \notag \\
    &\leq \mathbb{P}\left(\sup_{g_{\theta} \in \mathcal{G}_k(\mathbf{a})}|Z_{\theta}| \geq \delta+CE_{k,\mathbf{a},n,\gamma}\right) \leq 2C e^{-\frac{n\delta^2}{4Ca_2^2 R_{k,\mathbf{a},\gamma}^2 k}}. \label{devbnddelt} &&
\end{flalign}
\section{Appendix: KL divergence} \label{KLdivresults}
\subsection{Proof of Theorem  \ref{strongcons}}  \label{strongcons-proof}
Let $D_{\mathcal{G}_k(\mathbf{a}_k)}(P,Q):=\mathsf{H}_{\gamma_{\mathsf{KL}}, \mathcal{G}_k(\mathbf{a}_k)}(P,Q)$.
The proof of Theorem  \ref{strongcons} relies on the following lemma, whose proof is given in Appendix \ref{lem:consicomp-proof}.
\begin{lemma} \label{lem:consicomp}
Let  $P,Q\in \mathcal{P}_{\mathsf{KL}}(\X)$. Then, for $X^n \sim P^{\otimes n}$ and $Y^n \sim Q^{\otimes n}$, the following holds for any $\alpha>0$:
\end{lemma}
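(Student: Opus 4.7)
Since Lemma~\ref{lem:consicomp} is cited as the workhorse behind both parts of Theorem~\ref{strongcons}, I expect its conclusion to be a KL-specific specialization of Theorem~\ref{empesterrbnd}, roughly of the form: for every $\delta>0$,
\[
 \PP\!\left(\big|\hat D_{\mathcal{G}_k(\mathbf{a})}(X^n,Y^n) - D_{\mathcal{G}_k(\mathbf{a})}(P,Q)\big| \geq \delta + C\,E^{\mathsf{KL}}_{k,\mathbf{a},n}\right) \leq 2C\exp\!\left(-\tfrac{n\delta^2}{V^{\mathsf{KL}}_{k,\mathbf{a}}}\right),
\]
with explicit $E^{\mathsf{KL}}_{k,\mathbf{a},n}$ and $V^{\mathsf{KL}}_{k,\mathbf{a}}$, and the corresponding $\PP$-a.s.\ vanishing of the estimation error whenever the class $\mathcal{G}_{k_n}(\mathbf{a}_n)$ is scaled so that $k_n \leq (1/2-\alpha)\log n$. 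The plan is to derive exactly this.

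The first step is to compute $\bar\gamma'_{\mathcal{G}_k(\mathbf{a})}$ for $\gamma_{\mathsf{KL}}(x)=e^x-1$. Since $\phi(\cdot)\in(0,1)$, every $g\in\mathcal{G}_k(\mathbf{a})$ satisfies $\|g\|_\infty \leq k a_2 + a_3$ on $\RR^d$, so $\gamma'_{\mathsf{KL}}(g(x))=e^{g(x)}\leq e^{ka_2+a_3}$, giving the KL-specific bound $\bar\gamma'_{\mathcal{G}_k(\mathbf{a})}\leq e^{ka_2+a_3}$. Plugging this into \eqref{Vkconstdef}--\eqref{Ekconstdef} produces
\[
 E^{\mathsf{KL}}_{k,\mathbf{a},n}=O\!\left(n^{-1/2}k^{3/2}a_2 e^{ka_2+a_3}\right),\qquad V^{\mathsf{KL}}_{k,\mathbf{a}}=O\!\left(k^2 a_2^2 e^{2(ka_2+a_3)}\right),
\]
and invoking Theorem~\ref{empesterrbnd} directly yields the displayed tail inequality.

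The second step handles the almost-sure statement needed for part (i) of Theorem~\ref{strongcons}. Setting $\mathbf{a}=\mathbf{1}$ and $k=k_n\leq(1/2-\alpha)\log n$ gives $e^{k_n+1}\leq e\cdot n^{1/2-\alpha}$, whence $E^{\mathsf{KL}}_{k_n,\mathbf{1},n}=O(k_n^{3/2}n^{-\alpha})\to 0$ and $V^{\mathsf{KL}}_{k_n,\mathbf{1}}=O(k_n^2 n^{1-2\alpha})$. Fix $\varepsilon>0$; for $n$ large enough the additive term $CE^{\mathsf{KL}}$ is below $\varepsilon/2$, so the tail bound reduces to $\PP(|\hat D - D|\geq\varepsilon)\leq 2C\exp(-c\varepsilon^2 n^{2\alpha}/k_n^2)$ for a universal $c$. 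Because $k_n$ grows only logarithmically while $n^{2\alpha}$ is polynomial, these probabilities are summable in $n$, and Borel--Cantelli gives $\hat D_{\mathcal{G}_{k_n}(\mathbf{1})}(X^n,Y^n)-D_{\mathcal{G}_{k_n}(\mathbf{1})}(P,Q)\to 0$ $\PP$-a.s.

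The third step targets the expected-error bound used in part (ii). Here I would use $\mathbf{a}$ corresponding to $\mathcal{G}_k^*(m_k)$ with $m_k=0.5\log k$, so $ka_2+a_3=3m_k$ and $e^{ka_2+a_3}=k^{3/2}$, and integrate the tail: $\EE|\hat D-D|\leq CE^{\mathsf{KL}}+\int_0^\infty \PP(|\hat D-D|\geq CE^{\mathsf{KL}}+\delta)\,d\delta$. The subgaussian tail contributes $O(\sqrt{V^{\mathsf{KL}}/n})$, which combined with $E^{\mathsf{KL}}$ produces the estimation-error part of the rate stated in Theorem~\ref{strongcons}; combining with the sup-norm approximation bound from Theorem~\ref{supaapprox} (applied to $f_{\mathsf{KL}}$) then closes the loop back in the proof of the theorem itself.

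The main obstacle is the exponential factor $e^{ka_2+a_3}$ coming from $\gamma'_{\mathsf{KL}}$: unlike IPM-type $\gamma$, it couples NN capacity and sample size very tightly. It forces the restriction $k_n\lesssim\log n$ in the consistency regime and explains why the choice $a_2=2m_k/k$ in $\mathcal{G}_k^*(m_k)$ is essential, keeping $e^{ka_2+a_3}$ only polynomial in $k$ and preserving a polynomial effective-error rate.
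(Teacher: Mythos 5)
Your proposal matches the paper's proof of Lemma~\ref{lem:consicomp} in its essential mechanics: compute $\bar\gamma'_{\mathcal{G}_k(\mathbf{a})}\leq e^{ka_2+a_3}$ for $\gamma_{\mathsf{KL}}(x)=e^x-1$, substitute into $E_{k,\mathbf{a},n,\gamma}$ and $V_{k,\mathbf{a},\gamma}$ from Theorem~\ref{empesterrbnd} to get a KL-specific tail bound, then obtain part~(i) by summability plus Borel--Cantelli and part~(ii) by integrating the tail. Two small scope notes: the lemma itself is stated for \emph{arbitrary} $\mathbf{a}_{k}$ subject only to the scaling $k^{3/2}a_{2,k}e^{ka_{2,k}+a_{3,k}}=O(n^{(1-\alpha)/2})$, whereas you hard-wire $\mathbf{a}=\mathbf{1}$ in step~2 and $\mathcal{G}_k^*(m_k)$ in step~3 (these are the instances invoked in Theorem~\ref{strongcons}, but the general lemma is what you should prove, and your argument lifts to it without change); and your step~3 folds in the approximation bound of Theorem~\ref{supaapprox}, which belongs to the proof of Theorem~\ref{strongcons}(ii) rather than to Lemma~\ref{lem:consicomp}, which is a pure empirical-estimation-error statement about the parameterized distance.
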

\begin{enumerate}[label = (\roman*),leftmargin=*]
  \item For  $n, k_n,\mathbf{a}_{k_n}=(a_{1,k_n},a_{2,k_n},a_{3,k_n})$  such that   $k_n^{\frac 32}a_{2,k_n}e^{k_na_{2,k_n}+a_{3,k_n}} = O \left(n^{\frac{1-\alpha}{2}}\right)$,
\begin{align}
  \hat{D}_{\mathcal{G}_{k_n}\left(\mathbf{a}_{k_n}\right)}(X^n,Y^n)  \xrightarrow[n\rightarrow \infty]{}    D_{\mathcal{G}_{k_n}\left(\mathbf{a}_{k_n}\right)}(P,Q),\quad  \mathbb{P}-\mbox{a.s.}.\label{errbndest}
\end{align}
\item  For $n, k,\mathbf{a}_k=(a_{1,k},a_{2,k},a_{3,k})$  such that   $k^{\frac 32}a_{2,k}e^{ka_{2,k}+a_{3,k}} = O \left(n^{\frac{1-\alpha}{2}}\right)$
 \begin{align}
     \mathbb{E}\left[\abs{\hat{D}_{\mathcal{G}_{k}\left(\mathbf{a}_k\right)}(X^n,Y^n)-D_{\mathcal{G}_k(\mathbf{a}_k)}(P,Q)}\right] =O\left(n^{-\frac 12}k^{\frac 32}a_{2,k} e^{ka_{2,k}+a_{3,k}}\right). \label{kldivgempesterr}
 \end{align}
 \end{enumerate}
We proceed to prove \eqref{finbndascon}.  Since  $f_{\mathsf{KL}} \in \mathsf{C}\left(\X\right)$ for a compact set $\X$, it follows from \citet[Theorem 2.8]{stinchcombe1990approximating} that for any $\epsilon>0$ and $k \geq k_0(\epsilon)$, there exists a $g_{\tilde \theta} \in \mathcal{G}_k(\mathbf{1})$ such that 
  \begin{align}
      \sup_{x \in \X}\abs{f_{\mathsf{KL}}(x)-g_{\tilde \theta}(x)} \leq \epsilon. \label{approxbndwt}
  \end{align}
  This implies that 
  \begin{align}
     \lim_{k \rightarrow \infty} D_{\mathcal{G}_k(\mathbf{1})}(P,Q)  =\kl{P}{Q}. \label{approxlim}
  \end{align}
To see this, note that
\begin{align}
 D_{\mathcal{G}_k(\mathbf{1})}(P,Q) \leq  \kl{P}{Q},~ \forall  k \in \mathbb{N}, \label{limitapprkl}
\end{align}
 by \eqref{CC-charact} since $g_{\theta}$ is continuous and  bounded ($\abs{g_{\theta} } \leq k+1$). Moreover, the left hand side (L.H.S.) of \eqref{limitapprkl} is monotonically increasing in $k$, and being bounded, has a limit point. Then, \eqref{approxlim} will follow if we show that the limit point is $\kl{P}{Q}$. Assume otherwise that $\lim_{k \rightarrow \infty}   D_{\mathcal{G}_k(\mathbf{1})}(P,Q)<\kl{P}{Q}$. Note that $\mathcal{G}_k(\mathbf{1})$ is a closed set and hence the supremum in the variational form of the L.H.S. of \eqref{limitapprkl} is a maximum. Then, defining 
 \begin{align}
  D(g):=1+\mathbb{E}_P[g(X)]-\mathbb{E}_Q\left[e^{g(Y)}\right],   
 \end{align}
this  implies that there exists $\delta>0$ and
\begin{align}
    g_{\theta_{k}^*}:= \argmax_{g_{\theta} \in \mathcal{G}_k(\mathbf{1})} D(g_{\theta}), \label{optkgtheta}
\end{align}
such that for all $k$,
\begin{align}
 \kl{P}{Q}- D(g_{\theta_{k}^*}) \geq \delta. \label{contradineqkl}
\end{align}
However, it follows from \eqref{approxbndwt} that for all $k \geq k_0(\epsilon)$,
\begin{flalign}
   \kl{P}{Q}- D(g_{\theta_{k}^*}) &\leq \kl{P}{Q}- D(g_{\tilde \theta}) \notag\\
   &\leq \mathbb{E}_P\left[\abs{f_{\mathsf{KL}}(X)- g_{\tilde  \theta}(X)}\right]+\mathbb{E}_Q\left[\abs{e^{f_{\mathsf{KL}}(Y)}-e^{g_{\tilde \theta}(Y)}}\right] \notag\\
     &\leq  \mathbb{E}_P\left[\abs{f_{\mathsf{KL}}(X)- g_{\tilde  \theta}(X)}\right]+L_{P,Q}~\mathbb{E}_Q\left[\abs{1-e^{g_{\tilde  \theta}(Y)-f_{\mathsf{KL}}(Y)}}\right] \label{intrconsteps} \\
  &\leq  \epsilon+  L_{P,Q}(e^{\epsilon}-1), \label{bndtotdiffkl}&&  
\end{flalign}
where \eqref{bndtotdiffkl} follows from  \eqref{approxbndwt}. Note that
\begin{align}
 0 \leq   L_{P,Q}:= \norm{\frac{\dd P}{\dd Q}}_{\infty}<\infty, \label{lmaxdefn}
\end{align}
since $e^{f_{\mathsf{KL}}}$ is a continuous function and hence bounded over a compact support $\X$.
 Taking $\epsilon$ sufficiently small in \eqref{bndtotdiffkl} contradicts \eqref{contradineqkl}, thus proving \eqref{approxlim}. 
 Next, for $a_{3,k}=a_{2,k}=a_{1,k}=1$ and any $\eta>0$, $k^{\frac 32}a_{2,k}e^{ka_{2,k}+a_{3,k}} < e^{k(1+\eta)}$  provided  $k$ is sufficiently large. Then, \eqref{finbndascon} follows from \eqref{errbndest}  and \eqref{approxlim} by letting $k=k_n  \rightarrow \infty$ (subject to constraint in Lemma \ref{lem:consicomp}$(i)$), and  noting that $\eta>0$ is arbitrary.

Next, we prove \eqref{KLeffbndsimp}. 
Note that since $f_{\mathsf{KL}} \in \mathcal{I}(M)$, we have from \eqref{finaapperrbarcls}  that for $k$ such that $m_k \geq M$, there exists $g_{\theta} \in \mathcal{G}_{k}^*(m_k)$ satisfying 
 \begin{align}
      \norm{ f_{\mathsf{KL}}-  g_{\theta}}_{\infty,P,Q} \leq \tilde C_{d,M} k^{-\frac 12 }=\left((\hat c_d+2) M+2\right)k^{-\frac 12}. \notag 
 \end{align}
 On the other hand, for $k$ such that $m_k < M$, taking $g_{\mathbf{0}}=0$ yields $ \norm{ f_{\mathsf{KL}}-  g_{\mathbf{0}}}_{\infty,P,Q} \leq M$. Hence, for all $k$,  there exists $g_{\theta_k^*} \in \mathcal{G}_{k}^*(m_k) $ such that
 \begin{align}
     \norm{ f_{\mathsf{KL}}-  g_{\theta_k^*}}_{\infty,P,Q} \leq D_{d,M,\mathbf{m}}k^{-\frac 12}, \label{newconstdimdef}
 \end{align}
 where $\mathbf{m}=\{m_k\}_{k \in \NN}$,
 \begin{align}
    & D_{d,M,\mathbf{m}}:=\tilde C_{d,M} \vee \sqrt{\bar m(M,\mathbf{m})}M, \label{constdefDd}\\
 & \bar m(M,\mathbf{m}):=\min \left\{k \in \NN: m_k \geq M\right\}. \label{mkinvdef} 
 \end{align}
 Also, observe that $ \kl{P}{Q} \geq D_{\mathcal{G}_{k}^*(m_k)}(P,Q) $ since $g_{\theta_k^*} \in \mathcal{G}_{k}^*(m_k)$ is bounded. Then, the following chain of inequalities hold: 
\begin{flalign}
  &   \abs{\kl{P}{Q}-D_{\mathcal{G}_{k}^*(m_k)}(P,Q)} \notag \\
  &= \kl{P}{Q}- D_{\mathcal{G}_{k}^*(m_k)}(P,Q) \notag\\
      & \stackrel{(a)}{\leq} \mathbb{E}_P\left[\abs{f_{\mathsf{KL}}(X)- g_{\theta_k^*}(X)}\right]+L_{P,Q}~\mathbb{E}_Q\left[\abs{1-e^{g_{\theta_k^*}(Y)-f_{\mathsf{KL}}(Y)}}\right] \notag \\
    & \stackrel{(b)}{\leq}   D_{d,M,\mathbf{m}} k^{-\frac 12}+ e^M \left(e^{D_{d,M,\mathbf{m}} k^{-\frac 12}}-1\right), \label{approxbndklfstrcase2} && 
\end{flalign}
where 
\begin{enumerate}[label = (\alph*),leftmargin=*]
    \item follows similar to \eqref{intrconsteps};
    \item is due to \eqref{newconstdimdef} and $L_{P,Q} \leq e^M$ since $f_{\mathsf{KL}} \in \mathcal{I}(M)$.
\end{enumerate}
On the other hand,  taking $a_{1,k}=\sqrt{k} \log k$, $ka_{2,k}=a_{3,k}=m_k$, and  $k$ satisfying $\sqrt{k}e^{2m_k} =O \left(n^{\frac{1-\alpha}{2}}\right)$ for some $\alpha>0$,
 we have  
 \begin{flalign}
&  \mathbb{E}\left[  \abs{\hat{D}_{\mathcal{G}_{k}^*(m_k)}(X^n,Y^n) -\kl{P}{Q}}\right]  \notag \\
&\stackrel{(a)}{\leq} \abs{D_{\mathcal{G}_{k}^*(m_k)}(P,Q) -\kl{P}{Q}}   + \mathbb{E}\left[\abs{D_{\mathcal{G}_{k}^*(m_k)}(P,Q)-
    \hat{D}_{\mathcal{G}_{k}^*(M)}(X^n,Y^n)}\right] \notag\\
  &\stackrel{(b)}{\leq}  
  D_{d,M,\mathbf{m}} k^{-\frac 12}+ e^M \left(e^{D_{d,M,\mathbf{m}} k^{-\frac 12}}-1\right)+O\left(e^{2m_k}\sqrt{k}~ n^{-\frac 12}\right) \label{finerrbndklapp}\\
&\stackrel{(c)}{=}O_{M}\mspace{-5 mu}\left(e^{D_{d,M,\mathbf{m}} k^{-\frac 12}}-1\right)\mspace{-3 mu}+O\mspace{-3 mu}\left(\mspace{-3 mu}e^{2m_k}\sqrt{k} n^{-\frac 12}\mspace{-3 mu}\right), \label{KLdivgbndorder} &&
 \end{flalign}
 where 
 \begin{enumerate}[label = (\alph*),leftmargin=*]
    \item  is due to triangle inequality;
    \item follows from \eqref{kldivgempesterr} and \eqref{approxbndklfstrcase2}.
\end{enumerate}
Choosing $m_k=0.5 \log k$ in \eqref{KLdivgbndorder} yields
\begin{flalign}
 &  \mathbb{E}\left[  \abs{\hat{D}_{\mathcal{G}_{k}^*(0.5 \log k)}(X^n,Y^n)  -\kl{P}{Q}}\right]= O\mspace{-5 mu}\left(\mspace{-3 mu}k^{-\frac 12}\right)\mspace{-3 mu}+O\mspace{-3 mu}\left(\mspace{-3 mu}k^{\frac 32} n^{-\frac 12}\mspace{-3 mu}\right), \label{bnderrsimpkl}&&
\end{flalign} 
since  for $k$ sufficiently large,
    \begin{align}
      e^{D_{d,M,\mathbf{m}} k^{-\frac 12}}-1 & =\sum_{j=1}^{\infty}\frac{\left(D_{d,M,\mathbf{m}} k^{-\frac 12}\right)^j}{j !} \leq  \sum_{j=1}^{\infty}\left(D_{d,M,\mathbf{m}} k^{-\frac 12}\right)^j =O \left( k^{-\frac 12}\right). \notag
    \end{align}
  This completes the proof.
      \begin{remark}\label{remarkeffratekl}
 Setting $m_k=M$ in \eqref{KLdivgbndorder} and via steps leading to \eqref{bnderrsimpkl}, we obtain \eqref{finbnderrrate-case1}.
    \end{remark}
 \subsubsection{Proof of Lemma \ref{lem:consicomp}}\label{lem:consicomp-proof}
 Note that for $\gamma_{\mathsf{KL}}(x)=e^x-1$, 
 \begin{align}
    & \bar{\gamma}'_{\mathcal{G}_{k}(\mathbf{a}_k)}=\sup_{\substack{x \in \X, \\g_{\theta} \in \mathcal{G}_k(\mathbf{a}_k)} }\gamma_{\mathsf{KL}}'(g_{\theta}(x)) \leq e^{ka_{2,k}+a_{3,k}}, \notag \\
   &  R_{k,\mathbf{a}_k,\gamma} \leq 2\sqrt{k}\left(e^{ka_{2,k}+a_{3,k}}+1\right), \notag
 \end{align}
 where $\gamma_{\mathsf{KL}}'$ denotes the derivative of $\gamma_{\mathsf{KL}}$.  Since
 \begin{align}
  E_{k,\mathbf{a}_k,n,\gamma} \leq 4 \sqrt{2} n^{-\frac 12}k^{\frac 32} a_{2,k}\left(e^{ka_{2,k}+a_{3,k}}+1\right) \xrightarrow[n\rightarrow \infty]{} 0,   
 \end{align}
 for  $k,\mathbf{a}_k$ such that  $k^{\frac 32}a_{2,k}e^{ka_{2,k}+a_{3,k}} = O \left(n^{\frac{1-\alpha}{2}}\right)$ for $\alpha>0$, it follows from \eqref{bndesterremp} that for any $k \in \mathbb{N}$, $\delta>0$, and $n$ sufficiently large, 
 \begin{align}
     \mathbb{P}\left(\abs{D_{\mathcal{G}_k(\mathbf{a}_k)}(P,Q) -
     \hat{D}_{\mathcal{G}_{k}\left(\mathbf{a}_k\right)}(X^n,Y^n) } \geq \delta \right) \leq  2C e^{-\frac{n(\delta-CE_{k,\mathbf{a}_k,n,\gamma})^2}{16Ca_{2,k}^2k^2\left(e^{ka_{2,k}+a_{3,k}}+1\right)^2}}. \label{probdevbndkl}
 \end{align}
 Hence,  for  $k_n,\mathbf{a}_{k_n}$  such that   $k_n^{\frac 32}a_{2,k_n}e^{k_na_{2,k_n}+a_{1,k_n}} =O \left(n^{\frac{1-\alpha}{2}}\right)$,
\begin{flalign}
 & \sum_{n=1}^{\infty}  \mathbb{P}\left(\abs{D_{\mathcal{G}_{k_n}\left(\mathbf{a}_{k_n}\right)}(P,Q) -
     \hat{D}_{\mathcal{G}_{k_n}\left(\mathbf{a}_{k_n}\right)}(X^n,Y^n)} \geq \delta \right) \leq  2C\sum_{n=1}^{\infty}e^{-\frac{n(\delta-CE_{k,\mathbf{a}_k,n,\gamma})^2}{16Ca_{2,k_n}^2 k_n^2\left(e^{k_na_{2,k_n}+a_{1,k_n}}+1\right)^2}} <\infty, \label{finiteasbc} &&
\end{flalign}
where the final inequality in  \eqref{finiteasbc} can be established via integral test for sum of series. This implies \eqref{errbndest} via the first Borel-Cantelli lemma.
 To prove \eqref{kldivgempesterr}, note that
 \begin{flalign}
   &  \mathbb{E}\left[\abs{D_{\mathcal{G}_k(\mathbf{a}_k)}(P,Q) -
     \hat{D}_{\mathcal{G}_{k}\left(\mathbf{a}_k\right)}(X^n,Y^n)}\right] \notag \\
    &=\int_{0}^{\infty} \mathbb{P}\left(\abs{D_{\mathcal{G}_k(\mathbf{a}_k)}(P,Q) -
     \hat{D}_{\mathcal{G}_{k}\left(\mathbf{a}_k\right)}(X^n,Y^n)} \geq \delta \right)d\delta \notag \\
    & \leq CE_{k,\mathbf{a}_k,n,\gamma}+\int_{CE_{k,\mathbf{a}_k,n,\gamma}}^{\infty} 2C e^{-\frac{n(\delta-CE_{k,\mathbf{a}_k,n,\gamma})^2}{16Ca_{2,k}^2k^2\left(e^{ka_{2,k}+a_{3,k}}+1\right)^2}} d\delta \notag \\ &=O\left(n^{-\frac 12}k^{\frac 32}a_{2,k} e^{ka_{2,k}+a_{3,k}}\right). \label{kldiverrbndkn}  &&
 \end{flalign}
 \subsection{Proof of Proposition \ref{prop:distconddirect}}\label{prop:distconddirect-proof} 
From proof of Corollary \ref{cor:bndfourcoeff} (see \eqref{finextchar}), there exists extensions $f_{p}^{(\mathsf{e})},f_{q}^{(\mathsf{e})} \in \cB_{ b'   \vee c'} \cap \cS_{s,b'}\left(\RR^d\right)$ of $f,\bar f$, respectively (see \eqref{fourcoeffholdval1} and \eqref{fourcoeffholdval2} for definitions of $b'$ and $c'$). Define $f_{\mathsf{KL}}^{(\mathsf{e})}:=f_{p}^{(\mathsf{e})}-f_{q}^{(\mathsf{e})}$.  Note that since $f_{p}^{(\mathsf{e})},f_{q}^{(\mathsf{e})} \in \mathcal{L}^1\left(\RR^d\right) \cup \mathcal{L}^2\left(\RR^d\right)$, their Fourier transforms exists.  Hence,  we have  
\begin{align}
 & B \left(f_{\mathsf{KL}}^{(\mathsf{e})}\right) \stackrel{(a)}{\leq} B \left(f_{p}^{(\mathsf{e})}\right)+ B \left(f_{q}^{(\mathsf{e})}\right) \stackrel{(b)}{\leq} 2 ( b'   \vee c'),\label{extconst1}\\
  &\max_{x \in \X} \abs{f_{\mathsf{KL}}^{(\mathsf{e})}(x)} \leq \max_{x \in \X} \abs{f_{p}^{(\mathsf{e})}(x)}+\max_{x \in \X} \abs{f_{q}^{(\mathsf{e})}(x)} \stackrel{(d)}{\leq} 2b, \label{extconst3}
\end{align}
where
\begin{enumerate}[label = (\alph*),leftmargin=*]
    \item follows from the definition in \eqref{Cfconstdefval} and linearity of the Fourier transform;
    \item (c) is since $f_{p}^{(\mathsf{e})},f_{q}^{(\mathsf{e})} \in \cB_{ b'   \vee c'} $; \stepcounter{enumi}
     \item is due to $(P,Q) \in   \mathcal{L}_{\mathsf{KL}}(b,c)$.
\end{enumerate}
Hence, it follows from \eqref{extconst1}-\eqref{extconst3} that $f_{\mathsf{KL}}^{(\mathsf{e})}|_\cX \in \mathcal{I}(M) $ with $M= 2 \bar c_{b,c,d}$ (since $b \leq b'$), where $\bar c_{b,c,d}$ is given in \eqref{constapproxhold}. The claim then follows from Theorem \ref{strongcons} since $f_{\mathsf{KL}}=f_{\mathsf{KL}}^{(\mathsf{e})}|_\cX$. 
 \section{Appendix: $\chi^2$ divergence}
\subsection{Proof of Theorem \ref{strongconschisq}} \label{strongconschisq-proof}
Let $\chi^2_{\mathcal{G}_k(\mathbf{a}_k)}(P,Q):=\mathsf{H}_{\gamma_{\chi^2}, \mathcal{G}_k(\mathbf{a}_k)}(P,Q)$. 
The proof of Theorem  \ref{strongconschisq} is based on the lemma below (see  Appendix \ref{lem:consicompchisq-proof} for proof).
 \begin{lemma} \label{lem:consicompchisq}
Let  $P,Q\in\mathcal{P}_{\chi^2}(\X)$. For $X^n \sim P^{\otimes n}$ and $Y^n \sim Q^{\otimes n}$, the following holds for any $\alpha>0$:
\begin{enumerate}[label = (\roman*),leftmargin=*]
    \item For  $n,k_n,\mathbf{a}_{k_n}$ such that $k_n^{\frac 52}a_{2,k_n}^2+k_n^{\frac 32}a_{2,k_n} a_{3,k_n}= O \left(n^{\frac{1-\alpha}{2}}\right)$,
\begin{align}
  \hat{\chi^2}_{\mathcal{G}_{k}\left(\mathbf{a}_{k_n}\right)}(X^n,Y^n)   \xrightarrow[n\rightarrow \infty]{}     \chi^2_{\mathcal{G}_{k_n}\left(\mathbf{a}_{k_n}\right)}(P,Q),\quad  \mathbb{P}-\mbox{a.s.} \label{errbndestchisq}
\end{align}
\item For  $n,k,\mathbf{a}_k$ such that $k^{\frac 52}a_{2,k}^2+k^{\frac 32}a_{2,k} a_{3,k}= O \left(n^{\frac{1-\alpha}{2}}\right)$, \begin{flalign}
     \mathbb{E}\left[\abs{\hat{\chi^2}_{\mathcal{G}_{k}\left(\mathbf{a}_k\right)}(X^n,Y^n)-
    \chi^2_{\mathcal{G}_k(\mathbf{a}_k)}(P,Q)}\right] =O\left(n^{-\frac 12}\left(k^{\frac 52}a_{2,k}^2+k^{\frac 32}a_{2,k}a_{3,k} \right)\right). \label{chisqdivgempesterr} &&
 \end{flalign}
\end{enumerate}
\end{lemma}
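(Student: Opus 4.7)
My plan is to adapt the argument of Lemma \ref{lem:consicomp} (the KL counterpart) to the $\chi^2$ setting, treating $\gamma_{\chi^2}(x)=x+x^2/4$ in place of $\gamma_{\mathsf{KL}}(x)=e^x-1$. The core observation is that $\gamma_{\chi^2}'(x)=1+x/2$ is affine rather than exponential in $x$, so bounding $\bar{\gamma}'_{\mathcal{G}_k(\mathbf{a}_k)}$ over the NN class produces only polynomial factors in $k,a_{2,k},a_{3,k}$, which is precisely what yields the sharper rate \eqref{chisqdivgempesterr} relative to \eqref{kldivgempesterr}.

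First I would verify the hypothesis of Theorem \ref{empesterrbnd}. For every $g_\theta \in \mathcal{G}_k(\mathbf{a}_k)$ and every $x\in\X$, the definition in \eqref{EQ:NN_class} together with $0\le\phi\le 1$ gives $|g_\theta(x)|\le k a_{2,k}+a_{3,k}$, whence
\[
\bar{\gamma}'_{\mathcal{G}_k(\mathbf{a}_k)} \;\le\; 1+\tfrac{1}{2}\bigl(k a_{2,k}+a_{3,k}\bigr)\,<\,\infty,
\]
and the same bound serves as the Lipschitz constant used in \eqref{diffgamfngk} (via $|\gamma_{\chi^2}'|$). Substituting this into the definitions of $R_{k,\mathbf{a}_k,\gamma_{\chi^2}}$, $V_{k,\mathbf{a}_k,\gamma_{\chi^2}}$, $E_{k,\mathbf{a}_k,n,\gamma_{\chi^2}}$ from \eqref{Vkconstdef}--\eqref{Ekconstdef} gives $E_{k,\mathbf{a}_k,n,\gamma_{\chi^2}}=O\bigl(n^{-1/2}(k^{5/2}a_{2,k}^2+k^{3/2}a_{2,k}a_{3,k})\bigr)$ and $V_{k,\mathbf{a}_k,\gamma_{\chi^2}}=O\bigl(k^2 a_{2,k}^2(k a_{2,k}+a_{3,k})^2\bigr)$, after keeping only dominant terms under the lemma's scaling hypothesis.

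For part (i), under the assumption $k_n^{5/2}a_{2,k_n}^2+k_n^{3/2}a_{2,k_n}a_{3,k_n}=O(n^{(1-\alpha)/2})$, Theorem \ref{empesterrbnd} produces, for any fixed $\delta>0$ and all $n$ sufficiently large, a tail bound of the shape $2Ce^{-cn\delta^2/V_{k_n,\mathbf{a}_{k_n},\gamma_{\chi^2}}}$ whose exponent grows at least like $n^{\alpha}\delta^2$. The resulting sequence of probabilities is summable in $n$ by an integral test exactly as in \eqref{finiteasbc}, and the first Borel--Cantelli lemma yields \eqref{errbndestchisq}. For part (ii), I would integrate the tail via $\mathbb{E}[|\cdot|]=\int_0^\infty \mathbb{P}(|\cdot|\ge\delta)\,d\delta$, splitting the integral at $\delta_0=CE_{k,\mathbf{a}_k,n,\gamma_{\chi^2}}$: the region $[0,\delta_0]$ contributes $O(E_{k,\mathbf{a}_k,n,\gamma_{\chi^2}})$ and the region $[\delta_0,\infty)$ gives a Gaussian tail of the form $\int_{\delta_0}^\infty 2Ce^{-n(\delta-\delta_0)^2/V_{k,\mathbf{a}_k,\gamma_{\chi^2}}}d\delta = O(n^{-1/2}\sqrt{V_{k,\mathbf{a}_k,\gamma_{\chi^2}}})$, which is of the same order as $E_{k,\mathbf{a}_k,n,\gamma_{\chi^2}}$. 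This delivers \eqref{chisqdivgempesterr}.

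No genuine obstacle is anticipated: the argument is a direct transcription of the KL template. The only care required is bookkeeping of the polynomial factors that arise from $\bar{\gamma}'$ and correctly identifying the dominant term $k^{5/2}a_{2,k}^2$ (from the $k a_{2,k}$ part of $|g_\theta|$) versus $k^{3/2}a_{2,k}a_{3,k}$ (from the bias $a_{3,k}$). This polynomial-in-place-of-exponential substitution is the sole reason the consistency condition in part (i) tolerates $k_n=O(n^{(1-\alpha)/5})$, in sharp contrast to the essentially logarithmic scaling forced by $e^{k_n a_{2,k_n}+a_{3,k_n}}$ in \eqref{errbndest}.
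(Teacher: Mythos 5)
Your proposal is correct and follows essentially the same route as the paper: compute $\bar{\gamma}'_{\mathcal{G}_k(\mathbf{a}_k)} \le 1+\tfrac12(ka_{2,k}+a_{3,k})$, substitute into the constants $R_{k,\mathbf{a}_k,\gamma}$ and $E_{k,\mathbf{a}_k,n,\gamma}$ of Theorem~\ref{empesterrbnd}, and then repeat the Borel--Cantelli argument of~\eqref{finiteasbc} for part~(i) and the tail-integration argument of~\eqref{kldiverrbndkn} for part~(ii). The paper's Appendix~\ref{lem:consicompchisq-proof} does precisely this, invoking the KL proof verbatim for the final two steps.
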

The proof of \eqref{finbndasconchisq} follows from  \eqref{errbndestchisq}, using similar arguments used to establish \eqref{finbndascon} and steps leading to \eqref{chisqfinbndrate} below. The details are omitted.

  We proceed to prove \eqref{chisqsimprate}.  
Since $f_{\chi^2} \in \mathcal{I}(M)$, we have similar to \eqref{newconstdimdef} that  there exists $g_{\theta_k^*} \in \mathcal{G}_{k}^*(m_k)$ 
\begin{align}
     &  \norm{ f_{\chi^2}-  g_{\theta_k^*}}_{\infty,P,Q} =D_{d,M,\mathbf{m}}k^{-\frac 12}, \label{sqrtbndappchisq}
\end{align}
where $D_{d,M,\mathbf{m}}$ is defined in \eqref{constdefDd}.
Also, $ \chisq{P}{Q} \geq \chi^2_{\mathcal{G}_{k}^*(m_k)}(P,Q) $ since $g_{\theta} \in \mathcal{G}_{k}^*(m_k)$ is bounded. 
  Then, we have
\begin{flalign}
    & \abs{\chisq{P}{Q}- \chi^2_{\mathcal{G}_{k}^*(m_k)}(P,Q)} \notag \\
    &= \chisq{P}{Q}- \chi^2_{\mathcal{G}_{k}^*(m_k)}(P,Q) \notag\\
      &\leq \chisq{P}{Q}-  \mathbb{E}_P[g_{\theta_k^*}(X)]-\mathbb{E}_Q\left[g_{\theta_k^*}(Y)+\frac{g_{\theta_k^*}^2(Y)}{4}\right] \notag\\
     &\leq \mathbb{E}_P\left[\abs{f_{\chi^2}(X)- g_{\theta_k^*}(X)}\right]+\mathbb{E}_Q\left[\abs{f_{\chi^2}(Y)- g_{\theta_k^*}(Y)}+\frac 14\abs{f_{\chi^2}^2(Y)- g_{\theta_k^*}^2(Y)}\right] \notag\\
     &\leq  2D_{d,M,\mathbf{m}}k^{-\frac 12}+\mathbb{E}_Q\left[\frac 14\abs{f_{\chi^2}(Y)- g_{\theta_k^*}(Y)}\abs{f_{\chi^2}(Y)+ g_{\theta_k^*}(Y)}\right]\notag\\
     &\leq  2 D_{d,M,\mathbf{m}}  k^{-\frac 12}+\mathbb{E}_Q\Big[\frac 14\abs{f_{\chi^2}(Y)- g_{\theta_k^*}(Y)}\abs{ g_{\theta_k^*}(Y)-f_{\chi^2}(Y)}  +\frac 12\abs{f_{\chi^2}(Y)- g_{\theta_k^*}(Y)}\abs{f_{\chi^2}(Y)}\Big] \notag \\
     & \leq  2 D_{d,M,\mathbf{m}}  k^{-\frac 12}+ \frac{D_{d,M,\mathbf{m}}^2}{4k} +\frac{D_{d,M,\mathbf{m}} M}{2\sqrt{k}}, \label{chisqfinbndrate}&&
\end{flalign}
where \eqref{chisqfinbndrate} is due to $f_{\chi^2} \in \mathcal{I}(M)$. Taking $a_{1,k}=\sqrt{k} \log k$, $ka_{2,k}=a_{3,k}=m_k$, and  $k,m_k$ satisfying  $m_k^2 \sqrt{k} =O\left(n^{(1-\alpha)/2}\right)$, we have
 \begin{flalign}
&     \mathbb{E}\left[  \abs{\hat{\chi^2}_{\mathcal{G}_{k}^*(m_k)}(X^n,Y^n) -\chisq{P}{Q}}\right] \notag \\
&\stackrel{(a)}{\leq} \abs{ \chi^2_{\mathcal{G}_{k}^*(m_k)}(P,Q) -\chisq{P}{Q}} +    \mathbb{E}\left[\abs{\chi^2_{\mathcal{G}_{k}^*(m_k)}(P,Q)-
    \hat{\chi^2}_{\mathcal{G}_{k}^*(m_k)}(X^n,Y^n)}\right] \notag \\
   & \stackrel{(b)}{\leq}  2 D_{d,M,\mathbf{m}} k^{-\frac 12}+ \frac{D_{d,M,\mathbf{m}}^2}{4k} +\frac{D_{d,M,\mathbf{m}}M}{2\sqrt{k}} +O\left(m_k^2 \sqrt{k}~ n^{-\frac 12}  \right), \label{finerrbndchisqapp} \\
  &\stackrel{(c)}{=}O_{d,M}\left(\bar m(M,\mathbf{m})k^{-\frac{1}{2}}\right)+O\left(m_k^2 \sqrt{k}~ n^{-\frac 12}  \right), \notag &&
 \end{flalign}
 where
 \begin{enumerate}[label = (\alph*),leftmargin=*]
    \item is due to triangle inequality;
    \item follows from  \eqref{chisqdivgempesterr} and \eqref{chisqfinbndrate}; 
     \item is by the definition of $D_{d,M,\mathbf{m}}$ in \eqref{constdefDd} and since $\bar m(M,\mathbf{m}) \geq 1$.
\end{enumerate}
Setting $\mathbf{m}=\{0.5 \log k\}_{k \in \NN}$ in \eqref{finerrbndchisqapp} yields \eqref{chisqsimprate},   thus completing the proof.
 
 \subsubsection{Proof of Lemma \ref{lem:consicompchisq}} \label{lem:consicompchisq-proof}
For $\gamma_{\chi^2}(x)=x+\frac{x^2}{4}$, we have
\begin{align}
   &  \bar{\gamma}'_{\mathcal{G}_{k}(\mathbf{a}_k)}=\sup_{\substack{x \in \X, \\g_{\theta} \in \mathcal{G}_k(\mathbf{a}_k)} }\gamma_{\chi^2}'(g_{\theta}(x)) \leq 0.5(ka_{2,k}+a_{3,k})+1, \notag \\
   & R_{k,\mathbf{a}_k,\gamma} \leq 2\sqrt{k}\left(0.5(ka_{2,k}+a_{3,k})+2\right),
\end{align}
where $\gamma_{\chi^2}'(\cdot)$ denotes the derivative of $\gamma_{\chi^2}$. Since
\begin{align}
  0 \leq   E_{k,\mathbf{a}_k,n,\gamma} \leq 4 \sqrt{2} n^{-\frac 12}k^{\frac 32} a_{2,k}\left(0.5(ka_{2,k}+a_{3,k})+2\right) \xrightarrow[n\rightarrow \infty]{}  0,
\end{align}
 for  $k,\mathbf{a}_k$ such that  $k^{\frac 52}a_{2,k}^2+k^{\frac 32}a_{2,k} a_{3,k}= O \left(n^{\frac{1-\alpha}{2}}\right)$, it follows from \eqref{bndesterremp} that for any $k \in \mathbb{N}$, $\delta>0$, and $n$ sufficiently large, 
 \begin{align}
     \mathbb{P}\left(\abs{\hat{\chi^2}_{\mathcal{G}_{k}\left(\mathbf{a}_k\right)}(X^n,Y^n)-\chi^2_{\mathcal{G}_{k}\left(\mathbf{a}_k\right)}(P,Q)} \geq \delta \right) \leq  2C e^{-\frac{n(\delta-CE_{k,\mathbf{a}_k,n,\gamma})^2}{16Ca_{2,k}^2k^2\left(0.5(ka_{2,k}+a_{3,k})+2\right)^2}}.
 \end{align}
 Then, \eqref{errbndestchisq} and  \eqref{chisqdivgempesterr} follows using similar steps used to prove \eqref{errbndest} (see  \eqref{finiteasbc}) and \eqref{kldivgempesterr} (see \eqref{kldiverrbndkn}) in Theorem  \ref{strongcons}, respectively.  This completes the proof.
 \subsection{Proof of Proposition \ref{prop:distconddirect-chisq}} \label{prop:distconddirect-chisq-proof}
It follows from \eqref{finextchar} that there exists  extensions $f_{p}^{(\mathsf{e})},f_{q}^{(\mathsf{e})} \in \cB_{ b'   \vee c'} \cap \tilde{\cS}_{s,b'}\left(\RR^d\right)$ of $f, \bar f \in \cH_{b,c}^{s,\delta}(\Ucal)$, respectively, where $ \tilde{\cS}_{s,b'}\left(\RR^d\right)$ is defined in \eqref{squareintclassallder}. Let $f_{\chi^2}^{(\mathsf{e})}=2 \left(f_{p}^{(\mathsf{e})} \cdot f_{q}^{(\mathsf{e})}-1\right) $. Recall the notation   $\boldsymbol{\alpha}_j$ for a multi-index of order $j$. We have from the chain rule for differentiation that $D^{\boldsymbol{\alpha}_j}f_{\chi^2}^{(\mathsf{e})}(x) $ is the sum of $2^j$ terms of the form $D^{\boldsymbol{\alpha}_{j_1}}f_{p}^{(\mathsf{e})}(x)\cdot D^{\boldsymbol{\alpha}_{j_2}}f_{q}^{(\mathsf{e})}(x)$, where $\boldsymbol{\alpha}_{j_1}+\boldsymbol{\alpha}_{j_2}=\boldsymbol{\alpha}_{j}$. Also, note  from \eqref{derbndinu} and \eqref{allderivsqint} that for $j=0,\ldots,s$, $f_{p}^{(\mathsf{e})}$,  $f_{q}^{(\mathsf{e})}$ satisfies
\begin{subequations}
\begin{equation}
   \abs{D^{\boldsymbol{\alpha_j}}f_{p}^{(\mathsf{e})}(x)} \vee \abs{D^{\boldsymbol{\alpha_j}}f_{q}^{(\mathsf{e})}(x)} \leq \hat b \leq b',~\forall~ x \in \RR^d,
\end{equation}
\begin{equation}
 \norm{D^{\boldsymbol{\alpha}_{j}}f_{p}^{(\mathsf{e})}}_{L_2\left(\RR^d\right)} \vee \norm{D^{\boldsymbol{\alpha}_{j}}f_{q}^{(\mathsf{e})}}_{L_2\left(\RR^d\right)}  \leq b'. 
\end{equation}
\end{subequations}
Then, it follows that for $j=0,\ldots,s$,
\begin{align}
  \norm{D^{\boldsymbol{\alpha}_j}f_{\chi^2}^{(\mathsf{e})}}_{L_2\left(\RR^d\right)}& \leq 2+2\norm{\sum_{\substack{\boldsymbol{\alpha}_{j_1},\boldsymbol{\alpha}_{j_2}:\\\boldsymbol{\alpha}_{j_1}+\boldsymbol{\alpha}_{j_2}=\boldsymbol{\alpha}_j}}D^{\boldsymbol{\alpha}_{j_1}}f_{p}^{(\mathsf{e})} \cdot D^{\boldsymbol{\alpha}_{j_2}}f_{q}^{(\mathsf{e})}}_{L_2\left(\RR^d\right)} \notag \\
  &\leq 2+ 2^{j+1} b'\max_{\boldsymbol{\alpha}_{j_2}}\norm{D^{\boldsymbol{\alpha}_{j_2}}f_{q}^{(\mathsf{e})}}_{L_2\left(\RR^d\right)} \notag \\
  &\leq 2+2^{j+1} b'^2. \label{chisqconstbndprop}
\end{align}
Hence, $f_{\chi^2}^{(\mathsf{e})} \in \tilde{\cS}_{s,2+2^{s+1} b'^2}\left(\RR^d\right)$. From Lemma  \ref{propsuffcond}, it follows that $B \left(f_{\chi^2}^{(\mathsf{e})}\right) \leq (2+2^{s+1} b'^2) \kappa_d \sqrt{d}$.   Moreover, we have 
\begin{align}
    \sup_{x \in \X} \abs{f_{\chi^2}^{(\mathsf{e})}} \leq  2+2 \sup_{x \in \X} \frac{p(x)}{q(x)} \leq  2+2b^2.
\end{align}
This implies that $f_{\chi^2}^{(\mathsf{e})}|_\cX \in \mathcal{I}\left((2+2^{s+1}b'^2)~ (\kappa_d \sqrt{d}\vee 1) \right)$ since $b' \geq b$.
The claim then follows from Theorem \ref{strongconschisq} by noting that $f_{\chi^2}=f_{\chi^2}^{(\mathsf{e})}|_\cX$ and $b'^2 \leq \bar c_{b,c,d}^2$.

 \section{Appendix: Squared Hellinger distance}
\subsection{Proof of Theorem \ref{strongconshel}} \label{strongconshel-proof}
Let $H^2_{\tilde {\mathcal{G}}_k(\mathbf{a}_k,t)}(P,Q):=\mathsf{H}_{\gamma_{H^2}, \tilde {\mathcal{G}}_k(\mathbf{a}_k,t)}(P,Q)$. 
The proof of Theorem  \ref{strongconshel} hinges on the following lemma, whose proof is given in Appendix \ref{lem:consicomphel-proof}.
\begin{lemma} \label{lem:consicomphel}
Let  $P,Q\in\mathcal{P}_{H^2}(\X)$. For $X^n \sim P^{\otimes n}$ and $Y^n \sim Q^{\otimes n}$, the following holds for any $\alpha>0$:
\begin{enumerate}[label = (\roman*),leftmargin=*]
    \item For  $n,k_n,\mathbf{a}_{k_n}$ such that $k_n^{\frac 32}a_{2,k_n} t_{k_n}^{-2}= O \left(n^{\frac{1-\alpha}{2}}\right)$,
\begin{align}
  \hat{H^2}_{\tilde{\mathcal{G}}_{k_n}\left(\mathbf{a}_{k_n},t_{k_n}\right)}(X^n,Y^n)   \xrightarrow[n\rightarrow \infty]{}     H^2_{\tilde{\mathcal{G}}_{k_n}\left(\mathbf{a}_{k_n},t_{k_n}\right)}(P,Q),\quad  \mathbb{P}-\mbox{a.s.} \label{errbndesthel}
\end{align}
\item For  $n,k,\mathbf{a}_k$ such that $k^{\frac 32}a_{2,k} t_k^{-2}= O \left(n^{\frac{1-\alpha}{2}}\right)$, \begin{flalign}
     \mathbb{E}\left[\abs{\hat{H^2}_{\tilde{\mathcal{G}}_{k}\left(\mathbf{a}_k,t_k\right)}(X^n,Y^n)-
    H^2_{\tilde{\mathcal{G}}_{k}\left(\mathbf{a}_k,t_k\right)}(P,Q)}\right] =O\left(n^{-\frac 12}k^{\frac 32}a_{2,k} t_k^{-2}\right). \label{heldivgempesterr} &&
 \end{flalign} 
\end{enumerate}
\end{lemma}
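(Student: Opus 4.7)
\textbf{Proof proposal for Lemma \ref{lem:consicomphel}.} The plan is to mirror the strategy used for Lemmas \ref{lem:consicomp} and \ref{lem:consicompchisq}, namely, to invoke the tail bound of Theorem \ref{empesterrbnd} adapted to the truncated NN class $\tilde{\cG}_k(\mathbf{a},t)$, identify the explicit dependence of $\bar\gamma'$ and $R_{k,\mathbf{a},\gamma}$ on the scaling parameters, and then convert the tail bound into (i) an almost-sure statement via Borel--Cantelli and (ii) an $L^1$ bound by integrating the tail.

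The first step is to compute the key constants. For $\gamma_{H^2}(x)=x/(1-x)$ one has $\gamma'_{H^2}(x)=(1-x)^{-2}$. Any $g\in\tilde{\cG}_k(\mathbf{a}_k,t_k)$ satisfies $g(x)\leq 1-t_k$ for all $x\in\X$, hence
\begin{equation*}
\bar\gamma'_{\tilde{\cG}_k(\mathbf{a}_k,t_k)}=\sup_{\substack{x\in\X,\\ g\in\tilde{\cG}_k(\mathbf{a}_k,t_k)}} \gamma'_{H^2}(g(x))\leq t_k^{-2},
\end{equation*}
so that $R_{k,\mathbf{a}_k,\gamma_{H^2}}\leq 2\sqrt{k}\bigl(t_k^{-2}+1\bigr)$ and $E_{k,\mathbf{a}_k,n,\gamma_{H^2}}=O\bigl(n^{-1/2}k^{3/2}a_{2,k}t_k^{-2}\bigr)$, matching the rate claimed in \eqref{heldivgempesterr}.

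The second step is to justify that Theorem \ref{empesterrbnd} applies verbatim to the truncated class. Inspecting its proof (Appendix \ref{empesterrbnd-proof}), the only place where the NN structure is used is in the Lipschitz bound \eqref{difffngk}, which gives $|g_\theta(x)-g_{\theta'}(x)|\leq \|\boldsymbol{\beta}(\theta)-\boldsymbol{\beta}(\theta')\|_1$. Since truncation $y\mapsto (1-t_k)\wedge y$ is $1$-Lipschitz, the same bound survives for $\tilde g_\theta:=(1-t_k)\wedge g_\theta$, and the subsequent metric-entropy computation carries through unchanged with $\bar\gamma'$ replaced by $\bar\gamma'_{\tilde{\cG}_k(\mathbf{a}_k,t_k)}$. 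Consequently, for all $\delta>0$,
\begin{equation*}
\PP\left(\bigl|\hat H^2_{\tilde{\cG}_k(\mathbf{a}_k,t_k)}(X^n,Y^n)-H^2_{\tilde{\cG}_k(\mathbf{a}_k,t_k)}(P,Q)\bigr|\geq \delta + C E_{k,\mathbf{a}_k,n,\gamma_{H^2}}\right)\leq 2Ce^{-\frac{n\delta^2}{16Ca_{2,k}^2 k^2 (t_k^{-2}+1)^2}}.
\end{equation*}

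The third and final step reduces to routine manipulations identical to those already carried out in Appendices \ref{lem:consicomp-proof} and \ref{lem:consicompchisq-proof}. Under the scaling $k_n^{3/2}a_{2,k_n}t_{k_n}^{-2}=O(n^{(1-\alpha)/2})$, the quantity $E_{k_n,\mathbf{a}_{k_n},n,\gamma_{H^2}}$ tends to zero, so that for any fixed $\delta>0$ the exponent in the tail bound becomes bounded below by $n^{\alpha}$ up to constants, and summability of $\sum_n e^{-n^{\alpha}}$ together with the first Borel--Cantelli lemma yields \eqref{errbndesthel}. For part (ii), writing the expected absolute error as $\int_0^\infty \PP(\cdot\geq \delta)\,d\delta$, splitting the integral at $CE_{k,\mathbf{a}_k,n,\gamma_{H^2}}$ and applying the Gaussian tail bound to the remainder gives \eqref{heldivgempesterr}. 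I anticipate no substantive obstacle here: the only delicate point is the extension of Theorem \ref{empesterrbnd} to the truncated class, and this follows immediately from the Lipschitz property of the truncation map.
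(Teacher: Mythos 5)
Your proposal is correct and follows essentially the same route as the paper's proof in Appendix E.1.1: compute $\bar\gamma'_{\tilde{\cG}_k(\mathbf{a}_k,t_k)}\le t_k^{-2}$, plug into $R_{k,\mathbf{a}_k,\gamma}$ and $E_{k,\mathbf{a}_k,n,\gamma}$ to obtain the tail bound, then reuse the Borel--Cantelli and tail-integration steps from the KL case. The one place where you go beyond what the paper writes is in explicitly observing that the truncation map $y\mapsto(1-t)\wedge y$ is $1$-Lipschitz, so that the Lipschitz bound \eqref{difffngk} and the ensuing covering-number estimate in Theorem~\ref{empesterrbnd} survive the passage from $\cG_k(\mathbf{a})$ to $\tilde{\cG}_k(\mathbf{a},t)$; the paper asserts this extension but does not justify it, so your added remark is a genuine (if small) improvement in rigor, not a divergence in approach.
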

We first prove \eqref{finbndasconhel}. Since  $f_{H^2} \in \mathsf{C}\left(\X\right)$ for a compact set $\X$, its supremum is achieved at some $x^* \in \X$. Also, since $\norm{\frac{dP}{dQ}}_{\infty}<\infty$ by definition of the Radon-Nikodym derivative,  we have $\sup_{x \in \X}f_{H^2}(x)=f_{H^2}(x^*)<1$. Moreover,  $t_k \leq 1-f_{H^2}(x^*)$ for sufficiently large $k$ since $t_k \rightarrow 0$. Then, it follows from \citet[Theorem 2.8]{stinchcombe1990approximating} that for any $\epsilon>0$ and $k \geq k_0(\epsilon)$ (some integer), there exists a $g_{\theta^*} \in \tilde {\mathcal{G}}^{(1)}_{k,t_k}$ such that
  \begin{align}
      \sup_{x \in \X}\abs{f_{H^2}(x)-g_{\theta^*}(x)} \leq \epsilon. \label{approxbndwthel}
  \end{align}
 This implies similar to \eqref{approxlim} in Theorem \ref{strongcons} that
  \begin{align}
     \lim_{k \rightarrow \infty}  H^2_{\tilde {\mathcal{G}}^{(1)}_{k,t_k}}(P,Q) =H^2(P,Q). \label{approxlimhel}
  \end{align} 
  Then, \eqref{finbndasconhel}  follows from \eqref{errbndesthel} and \eqref{approxlimhel}. 
  
Next, we prove \eqref{helsimperrbnd}.  Since $f_{H^2} \in \mathcal{I}_{H^2}(M)$, $1-f_{H^2}(x) \geq \frac{1}{M}$ for all $x \in \X$. Using $t_k \rightarrow 0$, we have from \eqref{approxratefin} that for $k$ such that $t_k  \leq \frac{1}{M}$ and $m_k \geq M$, there exists $g_{\theta} \in \tilde{\mathcal{G}}_{k,m_k,t_k}^{(2)}$ such that 
\begin{align}
     &  \norm{ f_{H^2}-  g_{\theta}}_{\infty,P,Q} \leq \tilde C_{d,M}k^{-\frac 12 }. \label{sqrtbndhel}
\end{align}
On the other hand, for $k$ such that $t_k  > \frac{1}{M}$ or $m_k <M$,  taking $g_{\mathbf{0}}=0$ yields $ \norm{ f_{H^2}-  g_{\mathbf{0}}}_{\infty,P,Q} \leq M$ as $f_{H^2}\in \mathcal{I}(M)$. Then, denoting $\mathbf{t}=\{t_k\}_{k \in \NN}$,  it follows similar to \eqref{newconstdimdef} that for all $k$, there exists $g_{\theta_k^*} \in \tilde{\mathcal{G}}_{k,m_k,t_k}^{(2)}$ such that 
 \begin{align}
     \norm{ f_{H^2}-  g_{\theta_k^*}}_{\infty,P,Q} \leq \tilde C_{d,M} k^{-\frac 12 } \vee\left( \sqrt{\bar t \left(M^{-1},\mathbf{t}\right)} \vee \sqrt{\bar m(M,\mathbf{m})}\right)Mk^{-\frac 12 }=:\bar D_{d,M,\mathbf{t},\mathbf{m}}k^{-\frac 12},\label{newconstdimdefhel}
 \end{align}
where $\bar t \left(M^{-1},\mathbf{t}\right):=\inf\{k:t_k \leq M^{-1}\}$.
 Moreover, note that by definition, $ H^2(P,Q) \geq   H^2_{\tilde {\mathcal{G}}^{(2)}_{k,m_k,t_k}}(P,Q)$. 
 Then, we have
\begin{flalign}
    &\abs{H^2(P,Q)- H^2_{\tilde {\mathcal{G}}^{(2)}_{k,m_k,t_k}}(P,Q)}  \notag \\
    &= H^2(P,Q)- H^2_{\tilde {\mathcal{G}}^{(2)}_{k,m_k,t_k}}(P,Q)  \notag\\
 &\leq \mathbb{E}_P\left[f_{H^2}(X)\right]-\mathbb{E}_Q\left[\frac{f_{H^2}(Y)}{1-f_{H^2}(Y)}\right]- \mathbb{E}_P\left[g_{\theta_k^*}(X)\right]+\mathbb{E}_Q\left[\frac{g_{\theta_k^*}(Y)}{1-g_{\theta_k^*}(Y)}\right] \notag \\
     &\leq \mathbb{E}_P\left[\abs{f_{H^2}(X)- g_{\theta_k^*}(X)}\right]+\mathbb{E}_Q\left[\abs{\frac{f_{H^2}(Y)}{1-f_{H^2}(Y)}-\frac{g_{\theta_k^*}(Y)}{1-g_{\theta_k^*}(Y)}}\right] \notag\\
     &\leq \bar D_{d,M,\mathbf{t},\mathbf{m}}k^{-\frac 12}+\mathbb{E}_Q\left[ \abs{\frac{f_{H^2}(Y)- g_{\theta_k^*}(Y)}{(1-f_{H^2}(Y))(1-g_{\theta_k^*}(Y))}}\right]\notag\\
      & \leq  \bar D_{d,M,\mathbf{t},\mathbf{m}}k^{-\frac 12}+ M~t_k^{-1} \bar D_{d,M,\mathbf{t},\mathbf{m}}k^{-\frac 12},
     \label{approxerrhelcase2}&&
\end{flalign}
where \eqref{approxerrhelcase2} is  due to $1-g_{\theta^*}(x) \geq t_k$, $\left(1-f_{H^2}(x)\right)^{-1} \leq M$ for all $x \in \X$, and \eqref{newconstdimdefhel}. 

Then, it follows  from \eqref{heldivgempesterr} and \eqref{approxerrhelcase2} that by taking $a_{1,k}=\sqrt{k} \log k$, $ka_{2,k}=a_{3,k}=m_k$, and  $\sqrt{k}m_kt_k^{-2} =O \left(n^{(1-\alpha)/2}\right)$ for some $\alpha>0$, we have
 \begin{flalign}
&  \mathbb{E}\left[  \abs{\hat{H}^2_{\tilde{\mathcal{G}}_{k,m_k,t_k}^{(2)}}(X^n,Y^n) -H^2(P,Q)}\right] \notag \\
&\leq \abs{H^2(P,Q)- H^2_{\tilde {\mathcal{G}}^{(2)}_{k,m_k,t_k}}(P,Q)}   + \mathbb{E}\left[\abs{\hat{H^2}_{\tilde{\mathcal{G}}_{k,m_k,t_k}^{(2)}}(X^n,Y^n)-
    H^2_{\tilde{\mathcal{G}}_{k,m_k,t_k}^{(2)}}(P,Q)}\right] \notag \\
 &\leq   \bar D_{d,M,\mathbf{t},\mathbf{m}}k^{-\frac 12}+ M~t_k^{-1} \bar D_{d,M,\mathbf{t},\mathbf{m}}k^{-\frac 12} +O\mspace{-4mu}\left(\mspace{-2.5mu}m_k \sqrt{k}t_k^{-2} n^{\mspace{-2mu}-\frac 12}\mspace{-2mu}\right) \label{finbndratehelcase2}\\
 &=O_{d,M}\mspace{-4mu}\left(\mspace{-2mu}\sqrt{\bar t \left(M^{-1},\mathbf{t}\right)} \vee \sqrt{\bar m(M,\mathbf{m})}~t_k^{-1}k^{-\frac{1}{2}}\mspace{-2.5mu}\right) +O\mspace{-4mu}\left(\mspace{-2.5mu}m_k \sqrt{k}t_k^{-2} n^{\mspace{-2mu}-\frac 12}\mspace{-2mu}\right). \notag && 
 \end{flalign}
 Setting $m_k=0.5 \log k$ and $t_k=\log^{-1} k$ in \eqref{finbndratehelcase2} yields \eqref{helsimperrbnd}, thus completing the proof.
 \subsubsection{Proof of Lemma \ref{lem:consicomphel}} \label{lem:consicomphel-proof}
 Note that   Theorem \ref{empesterrbnd} continues to hold with $ \mathcal{G}_k(\mathbf{a})$ in \eqref{maxdergamma} and \eqref{bndesterremp} replaced with $\tilde{ \mathcal{G}}_k(\mathbf{a},t)$, since for $\gamma_{H^2}(x)=\frac{x}{1-x}$, 
\begin{align}
   &  \bar{\gamma}'_{\tilde{ \mathcal{G}}_k(\mathbf{a}_k,t_k)}=\sup_{\substack{x \in \X, \\g_{\theta} \in \tilde{ \mathcal{G}}_k(\mathbf{a}_k,t_k)} }\gamma_{H^2}'(g_{\theta}(x)) =\sup_{\substack{x \in \X, \\g_{\theta} \in \tilde{ \mathcal{G}}_k(\mathbf{a}_k,t_k)} }\frac{1}{(1-g_{\theta})^2} \leq \frac{1}{t_k^2}, \notag 
\end{align}
where $\gamma_{H^2}'(\cdot)$ denotes the derivative of $\gamma_{H^2}$. This implies that $ R_{k,\mathbf{a}_k,\gamma} \leq 2\sqrt{k}\left(t_k^{-2}+1\right)$, and 
\begin{align}
  0 \leq   E_{k,\mathbf{a}_k,n,\gamma} \leq 4 \sqrt{2} n^{-\frac 12}k^{\frac 32} a_{2,k}\left(t_k^{-2}+1\right) \xrightarrow[n\rightarrow \infty]{}  0, \notag
\end{align}
 for  $k,\mathbf{a}_k$, $t_k$ such that  $k^{\frac 32}a_{2,k} t_k^{-2}= O \left(n^{\frac{1-\alpha}{2}}\right)$. It then follows from \eqref{bndesterremp} that for any $k \in \mathbb{N}$, $\delta>0$, and $n$ sufficiently large, 
 \begin{align}
     \mathbb{P}\left(\abs{\hat{H^2}_{\mathcal{G}_{k}\left(\mathbf{a}_k\right)}(X^n,Y^n)-H^2_{\mathcal{G}_{k}\left(\mathbf{a}_k\right)}(P,Q)} \geq \delta \right) \leq  2C e^{-\frac{n(\delta-CE_{k,\mathbf{a}_k,n,\gamma})^2}{16Ca_{2,k}^2k^2\left(t_k^{-2}+1\right)^2}}. \notag
 \end{align}
 Then, \eqref{errbndesthel} and  \eqref{heldivgempesterr} follows using similar steps used to prove \eqref{errbndest} (see  \eqref{finiteasbc}) and \eqref{kldivgempesterr} (see \eqref{kldiverrbndkn}) in Theorem  \ref{strongcons}, respectively.  This completes the proof.


\end{document}